\numberwithin{equation}{section}       
\theoremstyle{plain}
\newtheorem{theorem}{Theorem}[section]
\newtheorem{prop}{Proposition}[section]
\newtheorem{coro}[prop]{Corollary}
\newtheorem{lemma}[prop]{Lemma}
\newtheorem*{mainthm}{Main Theorem}
\newenvironment{proofof}[1]{\medskip
\noindent{\it Proof of #1.}}{ \hfill\qed\\ }
\theoremstyle{definition}
\newtheorem{definition}[prop]{Definition}
\theoremstyle{remark}
\newtheorem{rema}[prop]{Remark}
\newtheoremstyle{citing}
  {3pt}
  {3pt}
  {\itshape}
  {}
  {\bfseries}
  {.}
  {.5em}
  {\thmnote{#3}}
\theoremstyle{citing}
\DeclareMathAlphabet{\mathpzc}{OT1}{pzc}{m}{it} 
\newcommand{\F}{\mathbb{F}}
\newcommand{\N}{\mathbb{N}}
\newcommand{\R}{\mathbb{R}}
\newcommand{\T}{\mathbb{T}}
\newcommand{\Z}{\mathbb{Z}}
\newcommand{\sE}{\mathscr{E}}
\newcommand{\sL}{\mathscr{L}}
\newcommand{\hE}{E}
\newcommand{\hI}{I}
\newcommand{\hJ}{\widehat{J}}
\newcommand{\hR}{\widehat{R}}
\newcommand{\hX}{\widehat{X}}
\newcommand{\hOmega}{\widehat{\Omega}}
\newcommand{\tR}{\widetilde{R}}
\newcommand{\teta}{\widetilde{\teta}}
\newcommand{\eps}{\varepsilon}
\newcommand{\Leb}{\textrm{Leb}}
\newcommand{\dist}{d}
\newcommand{\Ome}{\Omega_\eps^\N}
\newcommand{\Omea}{\hOmega_\eps^\N}
\newcommand{\PP}{P}
\DeclareMathOperator{\Dist}{Dist}
\newcommand{\tm}{\widetilde{m}}
\newcommand{\tr}{\widetilde{r}}
\newcommand{\bft}{\textbf{t}}
\newcommand{\bfs}{\textbf{s}}
\newcommand{\bfu}{\textbf{u}}
\newcommand{\parab}{\textrm{par}}
\newcommand{\rep}{\textrm{rep}}
\DeclareMathOperator{\BAD}{BAD}
\begin{document}
\title[Non-uniformly expanding circle map]{On stochastic stability of expanding circle maps with neutral fixed points}
\author{Weixiao Shen and Sebastian van Strien}
\address{Department of Mathematics,
National University of Singapore, 10 Lower Kent Ridge Road,
Singapore 119076}
\email{matsw@nus.edu.sg}
\address{Department of Mathematics, Queen's Gate, Imperial College, London SW7 2AZ, UK}
\email{s.van-strien@imperial.ac.uk}
\subjclass[2000]{Primary: 37E05, Secondary: 37D25, 37C40, 37C75, 37H99.}
\begin{abstract}It is well-known that the Manneville-Pomeau map with a parabolic fixed point
of the form $x\mapsto x+x^{1+\alpha} \mod 1$
is stochastically stable for $\alpha\ge 1$ and the limiting measure is the Dirac measure at the fixed point.
In this paper we show that if $\alpha\in (0,1)$ then it is also stochastically stable. Indeed,
the stationary measure of the random map converges strongly
to the absolutely continuous invariant measure for the deterministic system as the noise
tends to zero.
\end{abstract}
\date{\today}

\maketitle

\section{Introduction}
In this paper, we consider stochastic stability of (topologically) expanding circle maps with neutral fixed points under noise.
Let $\T=\R/\Z$ be the circle. For each $\alpha>0$,
let $\sE_{\alpha}$ denote the collection of all orientation preserving covering maps $f: \T \to \T$ which satisfy:
\begin{enumerate}
\item there exist a finite set $\PP$ and an integer $N\ge 0$ so that $\PP_0:=f^N(\PP)$ consists of fixed points of $f$ and
so that the map $f$ is $C^{1+\alpha}$ on each component of $\T\setminus \PP$;
 \item for each $p_0\in \PP_0$,
 $$Df(p_0^-)\ge Df(p_0^+)\ge 1,$$
 and for each $x\in \T\setminus \PP_0$, $$Df(x^-)\ge Df(x^+)>1;$$
\item\label{classEalpha} for each $p_0\in \PP_0$, if $Df(p_0^+)=1$ (resp. $Df(p_0^-)=1$), then there exists $A_+>0$ (resp. $A^->0$) so that
$$
\lim_{x\downarrow p_0}\dfrac{Df(x)-1}{ d(x,p_0)^{\alpha}}\to A_+\,\,  \left(\mbox{resp. }\lim_{x\uparrow p_0}\dfrac{Df(x)-1}{ d(x,p_0)^{\alpha}}\to A_-\right).$$

\end{enumerate}
Here we assume that $Df(x_0^+)$ (resp. $Df(x_0^-)$) exists and is defined as $\lim_{x\downarrow x_0} Df(x)$ (resp. $\lim_{x\uparrow x_0} Df(x)$).

These maps serve as the simplest examples of non-uniformly expanding dynamical systems
with {\lq}intermittency{\rq}
and are often used to test the efficiency of a method. Examples are the famous Pomeau-Manneville map \cite{PM}
$$f(x)=x+x^{1+\alpha}\mod 1$$ and the following map
popularised by Liverani-Saussol-Vaienti \cite{LSV}:
$$f(x)= \left\{ \begin{array}{lc}
x(1+2^\alpha x^{\alpha})&\mbox{ for }x\in [0,1/2),\\
2x-1 &\mbox{ for }x\in [1/2,1].
\end{array} \right.
$$


Under some additional conditions it is well known that
a map $f\in \sE_{\alpha}$
has a {\em unique physical measure} $\mu_f$: for Lebesgue a.e. $x\in \T$, we have
$$\frac{1}{n}\sum_{i=0}^{n-1}\delta_{f^i(x)}\to \mu_f\text{ as } n\to\infty$$
in the weak star topology.
When $\alpha\ge 1$ at one of the fixed points, then $\mu_f$ is the Dirac measure at $p_0$ while for
$\alpha\in (0,1)$, $\mu_f$ is absolutely continuous with respect to the Lebesgue measure, see  \cite{Pi} and also \cite{Zw}.
In fact, it was even shown that for $\alpha\in (0,1)$
these maps are  mixing with polynomial decay of correlation,
see for example~\cite[Theorem 6]{Young2},\cite{Young3}, \cite{Hu}  and \cite{LSV}. For large deviation results concerning
such maps,   see
\cite{MN}, \cite{Mel}, \cite{PS} and  \cite{DGM}.


In this paper we will also consider this situation under {\em stochastic perturbations}.
More precisely,
let $f_t(x)=f(x)+t\mod 1$. For each $\eps>0$ let $\theta_\eps$ denote the normalized Lebesgue measure on $[-\eps,\eps]$.
Let $\Omega_\eps^\N=[-\eps,\eps]^\N$ and let $\theta_\eps^\N$ be the product measure
on this space. 
Let $F: \T\times \Omega_\eps^\N\to \T\times \Omega_\eps^\N$ denote the map
$$(x, t_0, t_1, \ldots)\mapsto (f_{t_0}(x), t_1, t_2, \ldots).$$
Write
$$f_\bft^n=f_{t_{n-1}}\circ \cdots f_{t_1}\circ f_{t_0}.$$
We denote by $\mathbb{P}_\eps$
the measure $\Leb \times \theta^\N_\eps$ on $\T\times \Omega_\epsilon^\N$, where
$\Leb$ is the Haar measure on $\T$.

A Borel probability measure $\mu_\eps$ on $\T$ is called a {\em stationary measure} for $\theta_\eps$, if for each Borel subset $E$ of $\T$, we have
$$\mu_\eps(E)=\int_{-\eps}^\eps \mu_\eps (f_t^{-1}(E)) d\theta_\eps(t).$$
If $f$ has a unique physical measure $\mu_f$, then
we say that $f$ is {\em stochastically stable} with respect to $(\theta_\eps)_{\eps>0}$ if for each $\eps>0$ small enough, there exists a unique stationary measure $\mu_\eps$ for $\theta_\eps$ and $\mu_\eps\to \mu_f$ as $\eps\to 0$ in the weak star topology.
We say that  $f$ is {\em strongly stochastically stable}
if $\mu_\eps \to \mu_f$ in the {\em strong topology}, i.e.\ if $d_{tv}(\mu_\eps,\mu)\to 0$ as $\eps\to 0$.
Here $d_{tv}(\mu_\eps,\mu)=\sup_{A} |\mu_\eps(A)-\mu(A)|$ where $A$ runs over all Borel sets.
If $\mu_\eps,\mu_f$ are absolutely continuous with densities $\zeta$
and $\zeta_\eps$, then strong convergence is equivalent to
$||\zeta-\zeta_\eps||_1\to 0$ as $\eps\to 0$ where $||\cdot ||_1$ stands for the $L^1$ norm.

The main result of this paper is the following theorem.

\begin{mainthm}
Let $f\in \sE_{\alpha}$ for some $0<\alpha<1$
and let $\theta_\eps$ be as above. Then there exists $\epsilon_0>0$
such that the following properties hold for each $\eps\in (0,\eps_0)$:
\begin{enumerate}
\item The random dynamical system $f_\bft$ has a unique stationary measure $\mu_\eps$ for $\theta_\eps$.
\item The support of the stationary measure $\mu_\eps$ is equal to $\T$, $\mu_\eps$ is absolutely continuous
and  for almost all  $(x,\bft)\in \T\times \Omega_\eps^\N$,
$$\dfrac{1}{n} \sum_{i=0}^{n-1} \delta_{f^i_\bft}(x) \to \mu_\eps$$
in the weak star topology.
\item $f$ has a unique absolutely continuous invariant measure.
\item $f$  is strongly stochastically stable.
\end{enumerate}
\end{mainthm}

\begin{rema}
The proof goes beyond the case of additive noise:
Let $\hat \Omega_\eps$ be a set which is bounded in the $C^{1+\alpha}$ norm and which is contained in
an  $\eps$-neighbourhood of  $f$ in the $C^1$ topology.
Let
$\hat G\colon  \T\times \hat \Omega^\N_\eps\to \T\times \hat \Omega^\N_\eps$ be
of the form $(x,g_0,g_1,\dots)\mapsto (g_0(x),g_1,g_2,\dots)$ and consider a measure $\theta_\eps$ on $\hat \Omega_\eps$
with the property that there exists $L>0$ so that for each $x$ and each Borel set $E$ one has
 $\theta_\eps(\{g\in \hat \Omega_\eps; g(x)\in E\})\le L(|E|/\eps)^{1/L}$. The results stated in the Main Theorem go through in this setting. Indeed, the only places where the precise form for the noise  perturbations is used are
 Proposition~\ref{prop:tailE} and
Lemmas~\ref{lem:badsmall} and \ref{lem:r1large} where the same proofs go through.
\end{rema}

In \cite{ArT},  stochastic stability of Pomeau-Manneville maps $f_\alpha(x)=x+x^{1+\alpha} \mod 1$
is also discussed. For $\alpha\ge 1$, it is shown that $f=f_\alpha$ is stochastically stable.
In this case, for each $\eps>0$, the stationary measure $\mu_\eps$ is absolutely continuous but the physical
measure $\mu_f$ for $f$ is supported on the unique neutral fixed point, so the total variation
$d_{tv}(\mu_\eps,\mu_f)\to 1$. Thus $f$ is not strongly stochastically stable when $\alpha\ge 1$.
For $\alpha\in (0,1)$, it is shown that any weak star limit point of $\mu_\eps$ as $\eps\to 0$ is of the form $s\delta_0+(1-s)\mu_f$ where $\mu_f$ is the absolutely continuous invariant measure of $f$.

%
%
%
%

Over the last two decades there has been an increasing interest in
stochastic stability. The hyperbolic case is fairly well understood, see
\cite{KK}, \cite{Ki1}, \cite{Ki2},  \cite{Young1} and also \cite{Met} (for Lorenz maps) and \cite{C} (for
piecewise expanding maps in higher dimensions).
There are also some results in the non-uniformly
hyperbolic case, see  \cite{AVil}, \cite{AVi}, \cite{AA}, \cite{Alves}, \cite{AVil}.
In addition there are also quite a few result which deal with the case of quadratic interval maps and H\'enon maps,
mostly  using Benedicks-Carelson parameter elimination, see
\cite{BeY}, \cite{BaV}, \cite{BaY} and \cite{BeV}. For surveys see \cite{V} and \cite{BDV}.

Recently, strong stochastic stability was shown for a large class of non-uniformly expanding interval maps (with critical points) by the first author in~\cite{Shen}. Comparing to the maps considered there, the non-linearity and combinatorics in our current setting are much simpler. However, there is a new phenomenon to be analyzed in the current setting: a random orbit may stay in a small neighborhood of the neutral fixed point for a very long time, so during these periods, only very weak expansion can be obtained. We believe that this paper is the first to deal with stochastic stability in systems with this kind of intermittency behaviour.

\section{Strategy of proof and some preliminary definitions}

There are several approaches for proving the existence of an absolutely continuous invariant measure
in the deterministic case. The most classical of these
is to view this as a fixed point problem by considering the Perron-Frobenius operator acting
on some subspace of $L^1$ functions $\phi$:
$$\mathcal{PF}(\phi)(x)=\sum_{f(y)=x} \dfrac{\phi(x)}{|Df( y)|}.$$
The challenge then is to construct a suitable Banach space of density functions
and show that one can apply some fixed point theorem in this Banach space (whose fixed point
will be the density of the invariant measure). This approach
was used with great success by many people in the hyperbolic case, but is less flexible and appropriate in the case where one has  no or only a very weak hyperbolic structure.

Another approach is to estimate
$$\mathcal{PF}^n(\phi)(x)=\sum_{f^n(y)=x} \dfrac{\phi(x)}{|Df^n( y)|}$$
directly, by establishing upper bounds for the size of a set $f^{-n}(A)$.
This is done by decomposing the various components of $f^{-n}(A)$
and using certain first return maps.
In the deterministic case this approach was taken in \cite{NS}, \cite{BSS}, \cite{BRSS}.

A third related approach is to consider induced transformations on some {\lq}tower{\rq},
to show that the induced
transformation has an absolutely continuous invariant measure and subsequently then to
show that the inducing times are summable. This approach is also rather classical and
has gained popularity  through the work of \cite{Young2} and \cite{Young3} and was applied
successfully to interval and H\'enon  maps which are weakly hyperbolic  (namely satisfying Collet-Eckman and certain
slow recurrence conditions).

Our approach is a mixture of the previous two and follows Shen's proof in \cite{Shen}
in which he shows stochastic stability for a large class of non-hyperbolic interval maps
(with maps which satisfy assumptions which are much weaker than the usual Collet-Eckmann conditions).

One issue our proof needs to overcome is that a map $f\in \sE_\alpha$ is not differentiable everywhere,
which means that when $n$ is large,  $f^n_\bft$ can have many points at which it is not differentiable.


\subsection{Distortion}

In the following we fix $0<\alpha<1$ and $f\in \sE_{\alpha}$.
As usual, we shall identify an interval in $\R$ of length  $\le 1$ with its projection on $\T$.
For a $C^{1+\alpha}$ diffeomorphism  $h: J\to J'$, define the {\em distortion} as
$$\Dist(h|J)=\sup_{x,y\in J} \log (|h'(x)|/|h'(y)|).$$
If $J\subset \T$ is an interval which is disjoint from
$$\PP_*=\{p\in \PP: Df(p^-)>Df(p^+)\},$$
and such that $f|J$ is injective,
then $f:J\to f(J)$ is a $C^1$ diffeomorphism which is $C^{1+\alpha}$
outside $P_0$ and therefore $\Dist(f|J)\le C  |J|^{\alpha}$, where $C$ is a constant depending only on $f$.
It follows that if $f^n|J$ is injective and none of the intervals $J,\dots,f^{n-1}(J)$
intersect $\PP_*$ then $f^n|J$ is a $C^{1}$ diffeomorphism onto its image and
$$\Dist(f^n|J) \le C\sum_{i=0}^{n-1} |f^i(J)|^\alpha.$$
Of course, if $J$ intersects $\PP_*$ then $f|J$ fails to be $C^{1+\alpha}$, but due to our assumption on the local
properties of $f$ near $p\in\PP$, the sum $\sum_{i=0}^{n-1} |f^i(J)|^\alpha$ still provides distortion control of $f^n|J$,
see Section \ref{subsec:compactinL1}.

For any $x\in \T,$ $\bft\in \Omega_\eps^\N$ and $\tau\in (0,1/2)$, we shall use
$U_\tau^{(n)}(x, \bft)$ to denote the component of $(f_\bft^n)^{-1}(B_\tau(f_\bft^n(x)))$ containing $x$. Note that $f_\bft^n$ maps $U_\tau^{(n)}(x,\bft)$ homeomorphically onto $B_\tau(f_\bft^n(x))$.
Let
$$\Lambda_\tau^{(n)}(x,\bft)=\inf\{Df_\bft^n(y^+): y\in U_\tau^{(n)}(x,\bft)\},$$
and let
\begin{equation}\label{eqn:defsL}
\sL_\tau^{(n)}(x,\bft)=\sup_{y\in U_\tau^{(n)}(x,\bft)}\sum_{j=0}^{n-1} \left(\frac{1}{Df_{\sigma^j(\bft)}^{n-j} (f^j_\bft(y^+))}\right)^\alpha.
\end{equation}
$\Lambda_\tau^{(n)}(x,\bft)$ is a lower bound for the expansion and $\sL_\tau^{(n)}(x,\bft)$ will be used to control distortion.

We shall use the following observation repeatedly.
\begin{lemma}\label{lem:slrec}
For any $x\in \T$, $\bft\in\Omega^\N$, positive integers $m$, $n$ and $\tau>0$, we have
$$\sL_{\tau}^{(m+n)}(x,\bft)\le \sL_{\tau}^{(m)}(f_\bft^n(x),\sigma^n\bft) + \Lambda^{-\alpha}\sL_{\tau'}^{(n)}(x,\bft),$$
where $\Lambda=\Lambda_\tau^{(m)} (f^n_\bft(x),\sigma^n(\bft))$ and $\tau'=\Lambda^{-1}\tau$.
In particular,
$$\sL_{\tau}^{(m+n)}(x,\bft)\le \sL_{\tau}^{(m)}(f_\bft^n(x),\sigma^n\bft) + \Lambda^{-\alpha}\sL_{\tau}^{(n)}(x,\bft).$$
\end{lemma}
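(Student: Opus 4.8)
The plan is to exploit the factorization $f_\bft^{m+n}=f_{\sigma^n\bft}^m\circ f_\bft^n$ together with the multiplicativity of one-sided derivatives under composition, which holds because every $f_t$ is orientation preserving, hence monotone increasing, so no sign or orientation issues arise. The first task is to pin down how the relevant domains nest. Since $f_\bft^n$ maps $U_\tau^{(m+n)}(x,\bft)$ injectively and continuously, it is a homeomorphism onto its image, which is a connected set containing $f_\bft^n(x)$ on which $f_{\sigma^n\bft}^m$ is a homeomorphism onto $B_\tau(f_\bft^{m+n}(x))$; comparing with the definition of $U_\tau^{(m)}$ gives
$$f_\bft^n\big(U_\tau^{(m+n)}(x,\bft)\big)=U_\tau^{(m)}\big(f_\bft^n(x),\sigma^n\bft\big)=:I.$$
Next, with $\Lambda=\Lambda_\tau^{(m)}(f_\bft^n(x),\sigma^n\bft)$, the monotone map $f_{\sigma^n\bft}^m$ has right derivative $\ge\Lambda$ at every point of $I$ and maps $I$ onto a ball of radius $\tau$ centred at $f_\bft^{m+n}(x)$; integrating this lower bound on either side of $f_\bft^n(x)$ shows that each of the two arcs into which $f_\bft^n(x)$ divides $I$ has length $\le\tau/\Lambda=\tau'$, i.e.\ $I\subseteq B_{\tau'}(f_\bft^n(x))$. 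Since $U_\tau^{(m+n)}(x,\bft)$ is connected, contains $x$, and is mapped by $f_\bft^n$ into $B_{\tau'}(f_\bft^n(x))$, it follows that $U_\tau^{(m+n)}(x,\bft)\subseteq U_{\tau'}^{(n)}(x,\bft)$.

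Now fix $y\in U_\tau^{(m+n)}(x,\bft)$ and set $z=f_\bft^n(y)\in I$. I would split the sum $\sum_{j=0}^{m+n-1}\big(Df_{\sigma^j\bft}^{m+n-j}(f_\bft^j(y^+))\big)^{-\alpha}$, whose supremum over $y$ defines $\sL_\tau^{(m+n)}(x,\bft)$, at the index $j=n$. For $j=n+i$ with $0\le i\le m-1$, the cocycle relations $\sigma^{n+i}\bft=\sigma^i(\sigma^n\bft)$ and $f_\bft^{n+i}(y)=f_{\sigma^n\bft}^i(z)$ turn the $j$-th term into $\big(Df_{\sigma^i(\sigma^n\bft)}^{m-i}(f_{\sigma^n\bft}^i(z^+))\big)^{-\alpha}$, so the tail $\sum_{j=n}^{m+n-1}$ is exactly the corresponding sum for $z\in I$ and hence is $\le\sL_\tau^{(m)}(f_\bft^n(x),\sigma^n\bft)$. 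For $0\le j\le n-1$, applying the chain rule to $f_{\sigma^j\bft}^{m+n-j}=f_{\sigma^n\bft}^m\circ f_{\sigma^j\bft}^{n-j}$ at $f_\bft^j(y)$ (whose image under $f_{\sigma^j\bft}^{n-j}$ is $z$) gives
$$Df_{\sigma^j\bft}^{m+n-j}(f_\bft^j(y^+))=Df_{\sigma^n\bft}^m(z^+)\cdot Df_{\sigma^j\bft}^{n-j}(f_\bft^j(y^+))\ge\Lambda\cdot Df_{\sigma^j\bft}^{n-j}(f_\bft^j(y^+)),$$
using $z\in I$. Raising to the power $-\alpha$ and summing over $0\le j\le n-1$ bounds the head by $\Lambda^{-\alpha}\sum_{j=0}^{n-1}\big(Df_{\sigma^j\bft}^{n-j}(f_\bft^j(y^+))\big)^{-\alpha}\le\Lambda^{-\alpha}\sL_{\tau'}^{(n)}(x,\bft)$, the last inequality being precisely the inclusion $y\in U_{\tau'}^{(n)}(x,\bft)$ proved above. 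Adding the head and tail estimates and taking the supremum over $y\in U_\tau^{(m+n)}(x,\bft)$ yields the first inequality. The ``in particular'' statement is then immediate: $Df\ge1$ everywhere by the definition of $\sE_\alpha$, so $\Lambda\ge1$, hence $\tau'=\tau/\Lambda\le\tau$, whence $U_{\tau'}^{(n)}(x,\bft)\subseteq U_\tau^{(n)}(x,\bft)$ and therefore $\sL_{\tau'}^{(n)}(x,\bft)\le\sL_\tau^{(n)}(x,\bft)$.

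I do not expect a genuine obstacle here: the statement is essentially a bookkeeping identity, and the only two places that require care are (i) the nesting $f_\bft^n(U_\tau^{(m+n)}(x,\bft))=U_\tau^{(m)}(f_\bft^n(x),\sigma^n\bft)$ and $U_\tau^{(m+n)}(x,\bft)\subseteq U_{\tau'}^{(n)}(x,\bft)$, which rests on $f_\bft^n$ being a homeomorphism on the component in question together with the length--expansion estimate, and (ii) keeping track of the one-sided derivatives, which compose multiplicatively only because each $f_t$ is monotone increasing and $f_\bft^j$ is continuous (so that $f_\bft^j(y^+)$ is an honest point).
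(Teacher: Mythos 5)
Your proof is correct and complete. The paper states this lemma without any proof, introducing it only as an ``observation'' to be used repeatedly, so there is no argument in the paper to compare against; yours is the natural one: factor $f_\bft^{m+n}=f_{\sigma^n\bft}^m\circ f_\bft^n$, identify $f_\bft^n(U_\tau^{(m+n)}(x,\bft))$ with $U_\tau^{(m)}(f_\bft^n(x),\sigma^n\bft)$, use the expansion bound $\Lambda$ to shrink the base pullback into $U_{\tau'}^{(n)}(x,\bft)$, and split the defining sum at index $n$ using the chain rule for one-sided derivatives (valid precisely because each $f_t$ is orientation preserving). The two points you flag as needing care --- the nesting of the pullbacks and the multiplicativity of the $(\,\cdot\,)^+$-derivatives --- are indeed the only places where something could go wrong, and you handle both correctly.
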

%

\subsection{Escape times}\label{subsec:escape}
Take $\hI$ to be a neighbourhood of $\PP_0$
so that each component of $\hI$ contains a unique fixed point
and so that  (i) $f(\hI)$ compactly contains $\hI$ (ii) $f|\hI$ is injective
(iii) for any $x\in I$, $f(x)\notin I$ implies $f^2(x)\not\in I$
(iv) if $x,f(x)\in I$, then they are  contained in the same components of $I$.
By shrinking $I$ if necessary, we assume that these properties
(i)-(iv) are still satisfied when $f$ is replaced by $f_t$ with $|t|$ small.

\begin{definition}
The {\em first escaping time of } $(x,\bft)\in \T\times [-\eps_0,\eps_0]$ is defined as
$$\hE(x,\bft):=\inf\{m\ge 0: f_\bft^m(x)\not \in \hI\}$$
and the {\em first essential return} as $r_1(x,\bft)=\inf\{s\ge E(x,\bft): f_\bft^s(x)\in I\}$.
\end{definition}

\begin{definition}
The {\em $k$-th essential return time} $r_{k,\bft}=r_{k,\bft}(x)$  for $x\in \T$ and $\bft\in \Omega_\eps^\N$ is defined
inductively as follows. For each $k\ge 2$,  define
$$F_{k-1}(x,\bft)=f_\bft^{r_{k-1}(x,\bft)}(x),\mbox{ and }$$
$$r_k(x,\bft)=\inf\{s\ge r_{k-1}(x,\bft) + E(F_{k-1}(x,\bft)): f_\bft^s(x)\in I\}.$$
\end{definition}

To reduce the notation in the following argument, we shall introduce two constants $\tau_*$ and $\lambda_*$ now.
Let $\tau_*>0$ be a constant smaller than the distance from $f^{-1}(I)\cap I$ to $\T\setminus I$  and let $\lambda_*:=\inf\{Df(x): x\not\in f^{-3}(I)\cap I\}>1.$
\begin{lemma}\label{lem:constantstau*lambda*}
Provided that $\eps>0$ is small enough, for each $x\in I$ and $\bft\in \Omega_\eps^\N$ with $E(x,\bft)<\infty$, we have $$\Lambda_{\tau_*}^{(E(x,\bft))}\ge \lambda_*.$$
\end{lemma}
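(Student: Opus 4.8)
The plan is to fix $x\in\hI$ and $\bft\in\Omega_\eps^\N$ with $n:=E(x,\bft)<\infty$ (so $n\ge 1$ since $x\in\hI$, and $f_\bft^j(x)\in\hI$ for $0\le j\le n-1$ while $f_\bft^n(x)\notin\hI$) and to prove the pointwise bound $Df_\bft^n(y^+)\ge\lambda_*$ for every $y\in U:=U_{\tau_*}^{(n)}(x,\bft)$; taking the infimum over $y$ then gives $\Lambda_{\tau_*}^{(n)}(x,\bft)\ge\lambda_*$. Since $Df_t=Df$ and $Df\ge 1$ everywhere by condition (2) in the definition of $\sE_\alpha$, the chain rule gives $Df_\bft^n(y^+)=\prod_{j=0}^{n-1}Df(f_\bft^j(y)^+)\ge 1$, with the $j$-th factor $\ge\lambda_*$ as soon as $f_\bft^j(y)\notin f^{-3}(\hI)\cap\hI$. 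So it is enough to find, for each $y\in U$, an index $j\in\{0,\dots,n-1\}$ with $f_\bft^j(y)\notin f^{-3}(\hI)\cap\hI$. This is automatic if $f_\bft^j(y)\notin\hI$ for some such $j$ (because $f^{-3}(\hI)\cap\hI\subset\hI$), so we assume $f_\bft^j(y)\in\hI$ for all $0\le j\le n-1$ and prove the sharper statement that $w:=f_\bft^{n-1}(y)$ satisfies $f^3(w)\notin\hI$.

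First I would locate $f(w)$. As $f_\bft^n$ maps $U$ homeomorphically onto $B_{\tau_*}(f_\bft^n(x))$, we have $f_\bft^n(y)\in B_{\tau_*}(f_\bft^n(x))$; since $f_\bft^n(x)\notin\hI$ this forces $\dist(f_\bft^n(y),\T\setminus\hI)<\tau_*$. Because $f(w)=f_{t_{n-1}}(w)-t_{n-1}=f_\bft^n(y)-t_{n-1}$ with $|t_{n-1}|\le\eps$, we get $\dist(f(w),\T\setminus\hI)<\tau_*+\eps$. Hence either $f(w)\notin\hI$, or $f(w)\in\hI$ with $\dist(f(w),\T\setminus\hI)<\tau_*+\eps$.

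Next I would push the deterministic map forward, using the geometry of $\hI$. Since $f(\hI)$ compactly contains $\hI$ and the local form of $f$ near $\PP_0$ (condition (3)) makes the dynamics push points away from $\PP_0$, after shrinking $\hI$ we may strengthen property (iii) to: $z\in\hI$, $f(z)\notin\hI$ $\Rightarrow$ $f^2(z)\notin\hI$ and $f^3(z)\notin\hI$ (three forward iterates of the collar $f(\hI)\setminus\hI$ remain outside $\hI$). In the first case of Step 1, applied to $z=w$, this gives $f^3(w)\notin\hI$. In the second case, take $\eps_0$ so small that $\tau_*+\eps_0<\dist(f^{-1}(\hI)\cap\hI,\T\setminus\hI)$ — possible since $\tau_*$ was chosen strictly below that distance; then $\dist(f(w),\T\setminus\hI)<\tau_*+\eps<\dist(f^{-1}(\hI)\cap\hI,\T\setminus\hI)$ shows $f(w)\notin f^{-1}(\hI)\cap\hI$, and as $f(w)\in\hI$ this means $f^2(w)\notin\hI$; applying property (iii) to $z=f(w)\in\hI$ then gives $f^3(w)=f^2(f(w))\notin\hI$. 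Either way $f^3(w)\notin\hI$, so $w=f_\bft^{n-1}(y)\notin f^{-3}(\hI)\cap\hI$, the $(n-1)$-st factor is $\ge\lambda_*$, and $Df_\bft^n(y^+)\ge\lambda_*$.

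The only delicate point, and the reason $\lambda_*$ is defined through $f^{-3}(\hI)\cap\hI$ rather than $f^{-1}(\hI)\cap\hI$, is the role of the noise: the orbit of $y$ need not leave $\hI$ abruptly at time $n$ the way the orbit of $x$ does, so we cannot conclude $f(w)\notin\hI$, only that $f(w)$ lies within $\tau_*+\eps$ of $\partial\hI$; this is why we must settle for the robust statement that the first three deterministic forward iterates of $w$ escape $\hI$. The rest is bookkeeping about the order of the constants: $\hI$ is shrunk so that the strengthened (iii) and the other defining properties of $\hI$ hold, $\tau_*$ is then taken below $\dist(f^{-1}(\hI)\cap\hI,\T\setminus\hI)$ as stipulated, and finally $\eps_0$ is chosen so that $\tau_*+\eps_0$ still lies below that distance.
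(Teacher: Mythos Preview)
Your argument is correct and follows the same route as the paper: both proofs isolate the last factor in the chain rule, showing that $w=f_\bft^{E-1}(y)$ lies outside $f^{-3}(I)\cap I$ so that $Df(w^+)\ge\lambda_*$. The paper compresses this into the single assertion that $f_\bft^{E-1}(U_{\tau_*}^{(E)}(x,\bft))$ is disjoint from $f^{-3}(I)$, while you supply the actual justification, splitting into the cases $f(w)\in I$ and $f(w)\notin I$ after showing $\dist(f(w),\T\setminus I)<\tau_*+\eps$.

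Your remark about having to strengthen property (iii) to ``$z\in I,\ f(z)\notin I\Rightarrow f^2(z),f^3(z)\notin I$'' is accurate and is a point the paper glosses over: with only the stated (iii), Case~B ($w\in I$, $f(w)\notin I$) yields $f^2(w)\notin I$ but not directly $f^3(w)\notin I$. Since $I$ is chosen with the explicit proviso ``by shrinking $I$ if necessary'', and since the collar $f(I)\setminus I$ together with its first two forward images stays outside $I$ once $I$ is small enough, your additional shrinking is legitimate and costs nothing. The subsequent order of choice you spell out (first $I$, then $\tau_*$, then $\eps_0$) is exactly right.
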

\begin{proof}
Write $E=E(x,\bft)$. Provided that $\eps>0$ is small enough, $f_\bft^{E-1}(U_{\tau_*}^{(E)}(x,\bft))$ is disjoint from $f^{-3}(I)$. Thus for each $y\in U_{\tau_*}^{(E)}(x,\bft)$,
$Df_\bft^E(y)\ge Df_{t_{E-1}}(f_\bft^{E-1}(y))\ge \lambda_*.$
\end{proof}
Write $\sL^{(n)}(x,\bft)=\sL_{\tau_*}^{(n)}(x,\bft)$, $\Lambda^{(n)}(x,\bft)=\Lambda_{\tau_*}^{(n)}(x,\bft)$ and $U^{(n)}(x,\bft)=U^{(n)}_{\tau_*}(x,\bft)$.

\begin{definition}\label{def:mtauK}
An integer $m\ge 1$ is  called a {\em $K$-scale expansion time } of  $(x,\bft)$ if
$$\sL^{(m)}(x,\bft)\le K,
\mbox{ and }
m\ge E(x,\bft).$$
We define
$$m_{K}(x,\bft):=\inf\{m; m \text{ is a $K$-scale expansion time of } (x,\bft)\}.$$
\end{definition}

\subsection{The Key Estimate}
The main step in the proof of the Main Theorem is  the following tail estimate:

\begin{theorem}\label{thm:tail-estimate} For any $\tilde \alpha \in (1,\alpha)$, there exist $K_0>0$ and $C_0>0$  
such that for each $m\ge 1$, we have $$\mathbb{P}_\eps \left((x,\bft) ;\, m_{K_0}(x,\bft)\ge m\right)\le C_0m^{-1/\tilde \alpha},$$
provided that $\eps>0$ is small enough.
\end{theorem}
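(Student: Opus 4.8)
My plan is to follow a random orbit $\{f_\bft^i(x)\}$, cut it at its essential returns to $I$, bound $\sL^{(m)}$ by iterating Lemma~\ref{lem:slrec} along these cuts, and then convert the resulting bound into the asserted polynomial tail by a renewal-type argument. The one place where the precise shape of the noise must be used is a uniform per-return tail estimate, for which I would invoke Proposition~\ref{prop:tailE} and Lemmas~\ref{lem:badsmall},~\ref{lem:r1large}.

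First, fix $(x,\bft)$, set $r_0=0$, $F_0=x$, let $r_1<r_2<\cdots$ be the essential return times, and write $F_k=f_\bft^{r_k}(x)\in I$, $w_k=r_{k+1}-r_k$, $\sL_k=\sL^{(w_k)}(F_k,\sigma^{r_k}\bft)$, $\Lambda_k=\Lambda^{(w_k)}(F_k,\sigma^{r_k}\bft)$. For $k\ge1$ the block $[r_k,r_{k+1})$ begins with an escape of $F_k$ from $I$, and since $U^{(w_k)}(F_k,\cdot)\subseteq U^{(E(F_k,\cdot))}(F_k,\cdot)$ while $Df\ge1$ one-sidedly everywhere, Lemma~\ref{lem:constantstau*lambda*} gives $\Lambda_k\ge\lambda_*$. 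Iterating the second inequality of Lemma~\ref{lem:slrec} (which preserves the scale $\tau_*$) over the blocks $0,\dots,k-1$ then gives, for every $k\ge1$,
\[
\sL^{(r_k)}(x,\bft)\ \le\ \sum_{l=0}^{k-1}\Big(\prod_{j=l+1}^{k-1}\Lambda_j\Big)^{-\alpha}\sL_l\ \le\ \sum_{l=0}^{k-1}\lambda_*^{-\alpha(k-1-l)}\,\sL_l\ =:\ S_k,
\]
where the distortion bounds of Section~\ref{subsec:compactinL1} take care of the non-smooth points of $f_\bft^n$. Writing $\gamma=\lambda_*^{-\alpha}\in(0,1)$ one has $S_{k+1}=\gamma S_k+\sL_k$. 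Put $\tau=\tau(x,\bft)=\inf\{k\ge1:S_k\le K_0\}$ with $K_0$ to be fixed. Since $r_\tau\ge r_1\ge E(x,\bft)$ and $\sL^{(r_\tau)}(x,\bft)\le S_\tau\le K_0$, the integer $r_\tau$ is a $K_0$-scale expansion time, so $m_{K_0}(x,\bft)\le r_\tau$ and it suffices to bound $\P_\eps(r_\tau\ge m)$.

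The two facts I would use about a single block, all uniform in small $\eps$, in the starting point, and (for $k\ge1$) conditionally on $\cF_{r_k}$, while for $k=0$ the average over $x\sim\Leb$ enters, are $\P_\eps(\sL_k\ge t\mid\cF_{r_k})\le Ct^{-1/\tilde\alpha}$ and $\P_\eps(w_k\ge t\mid\cF_{r_k})\le Ct^{-1/\tilde\alpha}$; these are what Proposition~\ref{prop:tailE} and Lemmas~\ref{lem:badsmall},~\ref{lem:r1large} provide, and since $1/\tilde\alpha>1$ they give $\bar w:=\sup_{k\ge0}\sup\mathbb E_\eps[w_k\mid\cF_{r_k}]<\infty$. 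Next I would show $\tau$ has an exponential tail. Let $c_0=c_*K_0$ with $c_*\in(0,1)$ depending only on $\gamma$, and $\sigma_k:=\lceil c_3\log^+(\sL_k/c_0)\rceil\in\mathbb Z_{\ge0}$ with $c_3$ (depending only on $\gamma$) chosen so that whenever $l+\sigma_l\le k$ for all $l\le k$ one has $\gamma^{k-l}\sL_l\le c_0\gamma^{(k-l)/2}$ for all such $l$, and hence $S_{k+1}=\sum_{l\le k}\gamma^{k-l}\sL_l<K_0$. The contrapositive shows that $\{\tau>q\}$ (i.e.\ $S_1,\dots,S_q>K_0$) forces the intervals $(l,l+\sigma_l]$, $0\le l<q$, to cover $\{1,\dots,q\}$, so $\sum_{l=0}^{q-1}\sigma_l\ge q$. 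From the $t^{-1/\tilde\alpha}$ tail, $\sigma_k$ has a uniform exponential conditional tail with $\sup_k\sup\mathbb E_\eps[\sigma_k\mid\cF_{r_k}]\to0$ as $K_0\to\infty$; fixing $K_0$ so that this supremum is $<\tfrac12$ and peeling $\sigma_{q-1},\dots,\sigma_0$ off one at a time in a conditional Chernoff estimate gives $\P_\eps(\tau>q)\le\P_\eps(\sum_{l<q}\sigma_l\ge q)\le Ce^{-cq}$ with $c,C$ independent of $\eps$; in particular $\mathbb E_\eps[\tau]<\infty$.

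Finally, for the stopped sum $r_\tau=\sum_{k=0}^{\tau-1}w_k$ I would use a single-big-jump split: with $\delta=\delta(m)=c_1/\log m$, the event $\{r_\tau\ge m\}$ lies in the union of (a)~$\{\exists\,k<\tau:w_k\ge\delta m\}$, (b)~$\{\tau>m/(3\bar w)\}$ and (c)~$\{\sum_{k<\tau}w_k\ge m,\ \tau\le m/(3\bar w),\ w_k<\delta m\ \forall k<\tau\}$; then $\P_\eps(\mathrm a)\le\mathbb E_\eps[\tau]\,C(\delta m)^{-1/\tilde\alpha}\le C'(\log m)^{1/\tilde\alpha}m^{-1/\tilde\alpha}$, $\P_\eps(\mathrm b)\le Ce^{-cm/(3\bar w)}$, and on (c) a Bernstein bound for the $\le m/(3\bar w)$ increments truncated at $\delta m$ (conditional means summing to $\le m/3$, conditional variances controlled since $1/\tilde\alpha>1$) gives $\P_\eps(\mathrm c)\le e^{-c_2/\delta}=m^{-c_2/c_1}\le m^{-1/\tilde\alpha}$ for suitable $c_1$. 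This yields $\P_\eps(m_{K_0}(x,\bft)\ge m)\le C_0(\log m)^{1/\tilde\alpha}m^{-1/\tilde\alpha}$, and replacing $\tilde\alpha$ by a slightly smaller admissible value absorbs the logarithm. The step I expect to be hardest is the per-block input: the one genuinely new feature here — an orbit lingering near a neutral fixed point for a long, heavy-tailed time — is precisely what must be quantified uniformly in $\eps$, and that is the technical core (Proposition~\ref{prop:tailE}). Within the present argument the delicate points are the legitimacy of the iteration in Step~1 in the presence of the non-differentiability of $f_\bft^n$, the uniformity (in $\eps$ and in the past) of the per-block tails that makes the Chernoff peeling and the single-big-jump split close, and the stopped-sum estimate, which is where the precise rate $m^{-1/\tilde\alpha}$ is produced.
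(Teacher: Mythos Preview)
The proposal has a genuine gap exactly at the step you flag as hardest: the uniform per-block conditional tails
\[
\P_\eps(\sL_k\ge t\mid\cF_{r_k})\le Ct^{-1/\tilde\alpha},\qquad
\P_\eps(w_k\ge t\mid\cF_{r_k})\le Ct^{-1/\tilde\alpha}
\]
do not hold. Conditioning on $\cF_{r_k}$ fixes the landing point $F_k\in I$, and nothing prevents $F_k$ from falling in $I_{\Delta_*\eps}$ (or merely at distance $\asymp\eps^{1/(1+\alpha)}$ from $\PP_0$). For such $F_k$ one has $w_k\ge E(F_k,\sigma^{r_k}\bft)\gtrsim\eps^{-\alpha/(1+\alpha)}$ with probability~$1$, so $\bar w=\sup_k\sup\mathbb E_\eps[w_k\mid\cF_{r_k}]\to\infty$ as $\eps\to0$ and your single-big-jump split cannot close. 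For $\sL_k$ the situation is worse: Lemma~\ref{lem:distgeneral} gives only $\sL_k\le K_1(1+w_k\eps^{\kappa\alpha})$, and by Lemma~\ref{lem:r1large} and Proposition~\ref{prop:expansionnotbad} the \emph{typical} conditional size of $\sL_k$ when $F_k\in I_{\Delta_*\eps}$ is $\asymp\eps^{-(1-\kappa)\alpha}$, not $O(1)$; so the Chernoff peeling for $\tau$ also fails. Proposition~\ref{prop:tailE} does not help here: its bound is an average over $x\sim\Leb$ (and degenerates to the flat $8\eps$ in the relevant regime), not a conditional statement at a fixed deep $F_k$.

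The paper's proof is organised precisely around this obstruction and proceeds in two stages. Because a single essential-return block can carry distortion $\asymp\eps^{-(1-\kappa)\alpha}$, the renewal/Pliss argument over essential (in fact ``special'') returns can only reach $m_K$ with a \emph{growing} threshold $K=\eps^{-\gamma\alpha}$; this is Proposition~\ref{prop:gamma-est}. Section~\ref{proof:thm-tail-estimate} then upgrades to a constant $K_0$ by an inducing construction: one waits for the time $M(x,\bft)=\min(m_{K_\sharp},\widehat m)$ of Lemma~\ref{lem:Mtaum}, and when $\widehat m$ wins one has landed in $B_{\eps^{2\gamma}}(\PP_0)$, whose subsequent escape provides expansion $\gtrsim\eps^{-2\gamma}$ that annihilates the accumulated $\sL\le\eps^{-\gamma\alpha}$; the Markov map $H$ on the pullback intervals $W_x^\bft$ lets this be iterated with geometric decay (Lemma~\ref{lem:Mnest}, Proposition~\ref{prop:tailestmlarge}). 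The final assembly then splits the range of $m$ into three $\eps$-dependent regimes (Corollary~\ref{cor:tailmsmall} for $m\le\eps^{-\kappa\alpha}$, a crude $\eps^\kappa$ bound for intermediate $m$, and Proposition~\ref{prop:tailestmlarge} for large $m$). Your direct renewal over essential returns cannot bypass the first stage, because the blocks between essential returns genuinely carry $\eps$-dependent distortion that no conditioning removes.
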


\subsection{Organisation of this paper}
In Section~\ref{sec:escape} we prove first expansion and distortion estimates
associated to the first escaping from the parabolic fixed point. Then in Section~\ref{sec:essentialret}, we show that
for a certain large set of  parameter set the distortion associated to certain essential returns has control. These estimates
will then be used in  Section~\ref{proof:thm-tail-estimate} to obtain the required
tail estimate from Theorem~\ref{thm:tail-estimate}.
In Theorem~\ref{thm:tail-estimate-nice} we construct an induced map with Markov properties and which still
satisfies a similar tail estimate.
The main theorem will then be deduced from Theorem~\ref{thm:tail-estimate-nice} in
the final part of Section~\ref{sec:final}.

\subsection{Notation}
	
Given functions $a,b\colon \R \to \R$ (or sequences $a_n,b_n$),
we write $a\asymp b$ (resp. $a\lesssim b$) if there exists a universal constant  $C$ so that
$1/C\le |a/b|\le C$ (resp. $|a|\le C|b|$).
We denote by $\sigma\colon \Ome\to \Ome$ the shift map $\sigma(t_0,t_1,\dots)=(t_1,t_2,\dots)$.
When ${\mathbf V}\subset T\times \Ome$, define ${\mathbf V}^\bft={\mathbf V}\cap (T\times \{\bft\})$.
Finally, given a set $I$ we will denote by $I(x)$ the component of $I$ that contains $x$.

\section{Estimates associated to escaping times}
\label{sec:escape}

Throughout this section, we fix a constant $\kappa\in (\alpha, 1)$ such that
\begin{equation}\label{eqn:kappa}
\kappa(1+\alpha)>1
\end{equation}
so that $\eps^{-\kappa \alpha}>>\eps^{-\alpha/(1+\alpha)}$ for $\eps>0$ small.

\subsection{One passage away from a parabolic fixed point}

In this section we analyse the distortion and expansion while an orbit escapes a component of $I$
(where $I$ is the neighbourhood of the set of fixed points in $\PP_0$ defined in Subsection~\ref{subsec:escape}).

\begin{prop} \label{prop:escape}
There exist constants $\Delta_*>0$ and $K_*>1$ such that the following hold for any $x\in I$ and $\bft\in \Omega_\eps^\N$,  provided that $\eps>0$ is small enough.
\begin{enumerate}
\item If $\dist(f(x),x)\ge \Delta_*\eps$, then
$$\hE(x,\bft)\le \eps^{-\alpha/(1+\alpha)}\mbox{ and }\Lambda^{(E(x,\bft))}(x,\bft)\ge K_*^{-1} \dist(x, \PP_0)^{-1}.$$
\item If $\hE(x,\bft)\le \eps^{-\kappa\alpha}$, then $m_{K_*}(x,\bft)= E(x,\bft).$
\end{enumerate}
\end{prop}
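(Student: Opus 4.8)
First I would set up the local picture near a parabolic fixed point $p_0\in\PP_0$. By assumption (3) in the definition of $\sE_\alpha$, near $p_0$ we have $Df(y)=1+A_\pm\, d(y,p_0)^\alpha(1+o(1))$, so that if $x_n=f^n(x)$ denotes the (deterministic, first) orbit and $d_n=d(x_n,p_0)$, then as long as the orbit stays in the relevant component of $\hI$ one has the recursion $d_{n+1}\approx d_n + A\, d_n^{1+\alpha}$ plus an error of size at most $\eps$ coming from the noise $t_n\in[-\eps,\eps]$. Comparing with the ODE $\dot d = A d^{1+\alpha}$ gives the standard asymptotics: starting from $d_0\asymp d(x,\PP_0)$, the number of iterates needed for $d_n$ to grow from order $\delta$ to order $1$ is $\asymp \delta^{-\alpha}$. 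The hypothesis $\dist(f(x),x)\ge\Delta_*\eps$ in part (1) forces $d_0\gtrsim \Delta_*^{1/\alpha}\eps^{1/(1+\alpha)}$ (since $d(f(x),x)\asymp d_0^{1+\alpha}$ near the fixed point), and then the escape time is $\lesssim (\eps^{1/(1+\alpha)})^{-\alpha}=\eps^{-\alpha/(1+\alpha)}$; choosing $\Delta_*$ large enough absorbs the noise so that the noisy orbit genuinely escapes in this many steps while staying comparable to the deterministic one. This gives the bound $\hE(x,\bft)\le\eps^{-\alpha/(1+\alpha)}$.

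Second, for the expansion estimate in part (1): along the escape, $\Lambda^{(E)}(x,\bft)=\inf_{y\in U^{(E)}}Df_\bft^E(y^+)$ and $Df_\bft^E(y)=\prod_{n=0}^{E-1}Df_{t_n}(f_\bft^n(y))=\prod (1+A\,d_n^\alpha(1+o(1)))$ up to the last step where the orbit leaves $\hI$ (which by Lemma 3.1 contributes at least a definite factor $\ge\lambda_*$, or can be absorbed). Taking logarithms, $\log Df_\bft^E \asymp \sum_{n} d_n^\alpha$, and using the approximate relation $d_{n+1}-d_n\asymp d_n^{1+\alpha}$ one gets $d_n^\alpha\asymp (d_{n+1}-d_n)/d_n\asymp \log d_{n+1}-\log d_n$ (Riemann-sum/telescoping heuristic), so $\sum_n d_n^\alpha\asymp \log(1/d_0)=\log(1/\dist(x,\PP_0))+O(1)$. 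Exponentiating yields $\Lambda^{(E)}(x,\bft)\gtrsim d_0^{-1}\asymp \dist(x,\PP_0)^{-1}$, which is exactly the claimed lower bound with a suitable $K_*$. The uniformity over $y\in U^{(E)}_{\tau_*}(x,\bft)$ follows because $f_\bft^E$ has bounded distortion on this pull-back of a ball of the fixed radius $\tau_*$ — here I would invoke the distortion bound $\Dist(f^n|J)\lesssim\sum|f^i(J)|^\alpha$ from the Distortion subsection, noting $\sum_n |f^n(U^{(E)})|^\alpha\lesssim \sum_n d_n^\alpha\cdot(\text{something summable})$ stays bounded because the intervals shrink geometrically back from the escape scale.

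Third, part (2): if $\hE(x,\bft)\le\eps^{-\kappa\alpha}$ we must show $m_{K_*}(x,\bft)=E(x,\bft)$, i.e. that $m=E$ already achieves $\sL^{(E)}(x,\bft)\le K_*$. By definition $\sL^{(E)}(x,\bft)=\sup_{y\in U^{(E)}}\sum_{j=0}^{E-1}(Df_{\sigma^j\bft}^{E-j}(f_\bft^j(y^+)))^{-\alpha}$. The key point is that the tail expansion from step $j$ to step $E$ is dominated by the growth of $d$, so $(Df_\bft^{E-j})^{-\alpha}\lesssim (d_j/d_E)^{\alpha}\asymp d_j^\alpha$ (using $d_E\asymp 1$). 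Hence $\sL^{(E)}\lesssim \sum_{j=0}^{E-1} d_j^\alpha$, and by the telescoping estimate above this sum is $\asymp\log(1/d_0)$. But the hypothesis $E\le\eps^{-\kappa\alpha}$ together with $E\asymp d_0^{-\alpha}$ gives $d_0\gtrsim \eps^{\kappa}$, so $\log(1/d_0)\lesssim \kappa\log(1/\eps)$ — which is \emph{not} bounded. So a cruder bound is not enough; instead I would use that $\sum_j d_j^\alpha$ is in fact a \emph{bounded} geometric-type sum: near escape $d_j^\alpha$ is of order $1$ but the earlier terms, while individually small, are numerous. The honest statement is that $\sum_{j} d_j^\alpha \asymp \log(d_E/d_0)$ only when $d_0$ is not too tiny; when $\hE\le \eps^{-\kappa\alpha}$ we have $d_0\gtrsim\eps^{\kappa}$ and one checks directly, using $\kappa(1+\alpha)>1$ hence $\eps^{-\kappa\alpha}\gg\eps^{-\alpha/(1+\alpha)}$, that this places us in a regime where $\sum d_j^\alpha$ is controlled by an absolute constant $K_*$ (essentially because the number of "slow" iterates with $d_j$ near $p_0$ is then small relative to $\eps^{-\kappa\alpha}$; in fact the condition $\hE\le\eps^{-\kappa\alpha}$ is a much weaker constraint than $\hE\le\eps^{-\alpha/(1+\alpha)}$, so in particular it allows $d_0$ as small as $\eps^\kappa$ and one re-runs the logarithmic estimate, which gives $\sum_j d_j^\alpha\lesssim 1$ after all once one is careful that the sum telescopes as $\log$ of a ratio of \emph{scales}, not of distances-to-$p_0$, and the scales range only over a bounded multiplicative interval). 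I would then fix $K_*$ to be any constant larger than the resulting bound, and larger than the one needed in part (1), and also shrink $\eps_0$ so Lemma 3.1 and the bounded-distortion estimates on $\hI$ all apply.

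The main obstacle I anticipate is step three — getting the \emph{uniform} bound $\sL^{(E)}\le K_*$ rather than an $O(\log(1/\eps))$ bound. The resolution has to exploit that $\sL$ measures a sum of $(Df^{\,\text{tail}})^{-\alpha}$ terms that decay geometrically once the orbit has left a fixed neighbourhood of $p_0$, together with the fact that $\sum_j d_j^\alpha$ telescopes to a logarithm of a \emph{ratio of consecutive scales along a dyadic decomposition of $\hI$} — and there are only $O(\log(1/\eps))$ such scales but each contributes only $O(1/\,(\text{number at that scale}))$-type weight so the total is genuinely $O(1)$; alternatively one reorganises by scale $2^{-k}$ and bounds the contribution of the $k$-th dyadic annulus by a convergent series in $k$. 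I would carry out the dyadic decomposition of $\hI(x)$ into annuli $\{2^{-(k+1)}<d(\cdot,p_0)\le 2^{-k}\}$, bound the time spent in annulus $k$ by $\asymp 2^{k\alpha}$, bound the tail derivative from anywhere in annulus $k$ below by $\gtrsim 2^k$, so the contribution of annulus $k$ to $\sL^{(E)}$ is $\lesssim 2^{k\alpha}\cdot 2^{-k\alpha}=O(1)$ per annulus — still logarithmically many — and then the extra saving comes from the geometric improvement of the tail derivative as one moves further from $p_0$ within the escape, giving $\sum_k 2^{k\alpha}\cdot(\text{better than }2^{-k\alpha}) <\infty$. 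Making this last geometric improvement precise is the crux, and it is precisely where the quantitative form of assumption (3) and the inequality $\kappa(1+\alpha)>1$ enter.
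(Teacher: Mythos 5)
Your treatment of the escape-time asymptotics and the expansion bound in part (1) follows essentially the same route as the paper's Lemma~\ref{lem:bindescape}: compare the noisy recursion $d_{n+1}\approx d_n+A\,d_n^{1+\alpha}$ with the ODE to get $E\asymp d_0^{-\alpha}$, then sum $d_j^\alpha\asymp(E-j)^{-1}$ to estimate the derivative. One point you gloss over but which turns out to be load-bearing: your "telescoping" gives $\log Df\gtrsim c\,\log(1/d_0)$ for some constant $c$, and after exponentiating $c$ becomes the \emph{exponent}. The paper tracks it explicitly (choosing $\eta$ so that the resulting exponent is at least $\kappa(1+\alpha)$, which exceeds $1$ by the standing assumption~\eqref{eqn:kappa}). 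This is not cosmetic; it is the precise source of the extra decay you later recognise you need.

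Your part (2) has a genuine gap, which you partly diagnose yourself but do not close. The bound $(Df_{\sigma^j\bft}^{E-j})^{-\alpha}\lesssim(d_j/d_E)^\alpha$, i.e.\ $Df^{E-j}\gtrsim d_E/d_j$, is too weak: it only gives $\sL^{(E)}\lesssim\sum_j d_j^\alpha\asymp\log(1/d_0)\asymp\log(1/\eps)$, not $O(1)$. Your dyadic decomposition does not repair this, because you still bound the tail derivative from annulus $k$ by $2^k$, giving $O(1)$ per annulus and $O(\log)$ in total, and you leave the required "geometric improvement" as an unproved hope. The needed input is the stronger derivative bound $Df_{\sigma^j\bft}^{E-j}(f_\bft^j(y))\gtrsim\min\bigl((E-j)^{\kappa(1+\alpha)/\alpha},\,\eps^{-\kappa}\bigr)$ (this is what the paper extracts from the recursion, with the crucial exponent $\kappa(1+\alpha)/\alpha$ rather than $1/\alpha$). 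Raising to $-\alpha$ and summing gives $\sum_{j=0}^{E-1}\max\bigl((E-j)^{-\kappa(1+\alpha)},\,\eps^{\kappa\alpha}\bigr)$; the first terms form a convergent series because $\kappa(1+\alpha)>1$, and the second terms contribute at most $E\,\eps^{\kappa\alpha}\le 1$ because $E\le\eps^{-\kappa\alpha}$. Both hypotheses of the proposition enter exactly here and nowhere else. In your dyadic picture: time spent in annulus $k$ is $\asymp 2^{k\alpha}$ (correct), but the tail derivative is $\gtrsim 2^{k\kappa(1+\alpha)}$, not $2^k$, so the $k$-th contribution is $\lesssim 2^{k\alpha}\cdot 2^{-k\alpha\kappa(1+\alpha)}=2^{-k\alpha(\kappa(1+\alpha)-1)}$, a convergent geometric series.

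Two further details are hand-waved. First, $\sL^{(E)}$ is a supremum over $y\in U^{(E)}(x,\bft)$, and you cannot control this supremum by invoking "bounded distortion" — $\sL$ is itself the quantity that controls distortion, so the argument would be circular. What is needed is the elementary combinatorial observation that escape times of two points in $U^{(E)}(x,\bft)$ differ by at most one (the paper's Claim~1); then the derivative bound above can be applied at $(f_\bft^j(y),\sigma^j\bft)$ uniformly in $y$. Second, in the regime $\eps^{-\alpha/(1+\alpha)}<E\le\eps^{-\kappa\alpha}$ (nonempty precisely because $\kappa(1+\alpha)>1$), the noise dominates the recursion before the orbit escapes, and the derivative lower bound saturates at $\eps^{-\kappa}$; this is where the hypothesis $E\le\eps^{-\kappa\alpha}$ is used to keep $E\,\eps^{\kappa\alpha}\le 1$. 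You mention that this regime exists but do not actually handle it.
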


Let $I^{\parab}$ denote the union of all components $U$ of $I\setminus \PP_0$ for which $\inf_{y\in U} Df(y)=1$ and let  $I^{\rep}$ denote the union of all other components of $I\setminus \PP_0$.
Moreover, let
$$I_\delta=\{x\in I: \dist(f(x), x)\le \delta\}.$$
So $I_\delta\supset \PP_0$.

\begin{lemma} \label{lem:bindescape}
There exists $\Delta>0$ and $C>0$ such that the following holds for any $\bft\in\Omega_\eps^\N$, provided that $\eps>0$ is small enough.
If $x\in  I^\parab\setminus I_{\Delta\eps}$, then
\begin{enumerate}
\item $C^{-1}\dist(x,\PP_0)^{-\alpha}\le E(x,\bft)\le C \dist(x, \PP_0)^{-\alpha},$
\item $Df_\bft^{E(x,\bft)}(x)\ge C^{-1} \dist(x,\PP_0)^{-\kappa (1+\alpha)};$
\end{enumerate}
and if $x\in I^\rep\setminus I_{\Delta\eps}$, then
\begin{enumerate}
\item[(1')] $C\log \dist(x,\PP_0)^{-1} \le E(x,\bft)\le C^{-1}\log \dist(x,\PP_0)^{-1},$
\item[(2')] $Df_\bft^{E(x,\bft)}(x)\ge C \dist(x,\PP_0)^{-1}$.
\end{enumerate}
\end{lemma}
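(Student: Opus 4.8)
The plan is to reduce both statements to one-dimensional estimates for the orbit inside the single component $U$ of $\hI$ containing the relevant fixed point $p_0$. Write $x_n := f_\bft^n(x)$ and $u_n := \dist(x_n, p_0)$. By properties (i)--(iv) of $\hI$ (which survive replacing $f$ by $f_t$, $|t|$ small), the whole orbit $x_0, x_1, \dots$ stays in $U$ for all $n < E := E(x,\bft)$, and since $Df_t\equiv Df$ the chain rule gives $Df_\bft^{E}(x) = \prod_{n=0}^{E-1} Df(x_n)$, so the noise affects only the positions $x_n$. Near the $p_0$-end of $U$, condition (3) in the definition of $\sE_\alpha$ gives, with $A\in\{A_+,A_-\}$, that $|f(y)-y| = \tfrac{A}{1+\alpha}\dist(y,p_0)^{1+\alpha}(1+o(1))$ and $Df(y)-1 = A\,\dist(y,p_0)^{\alpha}(1+o(1))$ as $y\to p_0$ on a parabolic side, while on a repelling side $|f(y)-y|/\dist(y,p_0)\to Df(p_0^\pm)-1>0$ and $Df(y)\to Df(p_0^\pm)>1$; away from $p_0$ both $|f(y)-y|$ and $Df(y)-1$ are bounded below by a positive constant on $U$. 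Since $f$ moves points of $U$ away from $p_0$, one has the one-step relation $u_{n+1} = u_n + |f(x_n)-x_n| + \eta_n$ with $|\eta_n|\le\eps$.

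The heart of the argument is the choice of $\Delta$. The hypothesis $x\notin I_{\Delta\eps}$ says $|f(x)-x|>\Delta\eps$, which by the asymptotics above forces $u_0^{1+\alpha}\gtrsim\Delta\eps$ in the parabolic case and $u_0\gtrsim\Delta\eps$ in the repelling case---this is the only place the hypothesis enters. Taking $\Delta$ large and fixing a small cutoff scale $u_*>0$, I would prove by induction on $n$ that $u_n$ is strictly increasing, stays $\ge u_0$, and satisfies $|f(x_n)-x_n|\ge 2\eps$ for every $n<E$; consequently $\tfrac12|f(x_n)-x_n|\le u_{n+1}-u_n\le\tfrac32|f(x_n)-x_n|$ throughout, and in particular $\eps/u_n^{1+\alpha}\le\eps/u_0^{1+\alpha}\lesssim 1/\Delta$ (parabolic) resp.\ $\eps/u_n\lesssim 1/\Delta$ (repelling). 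The delicate point is that near $p_0$ the drift $|f(x_n)-x_n|$ and the noise bound $\eps$ are both small, so one genuinely needs $\Delta$ large to separate them uniformly.

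With the two-sided bound $u_{n+1}-u_n\asymp|f(x_n)-x_n|$ in hand, the bounds on $E$ are routine: split the orbit at the first time $N$ with $u_N\ge u_*$; on the initial segment the increments of $u_n^{-\alpha}$ (parabolic) resp.\ of $\log u_n$ (repelling) are bounded above and below by positive constants, so $N\asymp u_0^{-\alpha}$ resp.\ $N\asymp\log u_0^{-1}$, while the final segment $u_n\in[u_*,\diam U]$ takes $O(1)$ steps, after which one more step leaves $U$, hence $\hI$; this yields (1) and (1'). For the derivative estimates, write $\log Df_\bft^{E}(x)=\sum_{n<E}\log Df(x_n)$ and drop the $O(1)$ positive terms with $u_n\ge u_*$. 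On a repelling side, near $p_0$ one has $\log Df(x_n)=\log(u_{n+1}/u_n)+O(u_n^\alpha)+O(\eps/u_n)$, and the errors sum to $O(1)$ along the geometrically increasing sequence $(u_n)$, so the sum telescopes to $\log(u_N/u_0)+O(1)=\log u_0^{-1}+O(1)$; hence $Df_\bft^E(x)\asymp u_0^{-1}$, which is (2'). On a parabolic side I would instead check, for $u_*$ small and $\Delta$ large, the term-by-term inequality $\log Df(x_n)\ge\kappa(1+\alpha)\log(u_{n+1}/u_n)$ for $n<N$: the quotient of $Df(x_n)-1\sim A u_n^\alpha$ by $(u_{n+1}-u_n)/u_n=\tfrac{A}{1+\alpha}u_n^\alpha(1+o(1))+\eta_n/u_n$ is at least $(1+\alpha)/(1+O(\eps/u_n^{1+\alpha}))\ge(1+\alpha)/(1+O(1/\Delta))$, which exceeds $\kappa$; summing telescopes to $\ge\kappa(1+\alpha)\log(u_*/u_0)=\kappa(1+\alpha)\log u_0^{-1}-O(1)$, i.e.\ $Df_\bft^E(x)\ge C^{-1}u_0^{-\kappa(1+\alpha)}$, which is (2). (The loss from the deterministic exponent $1+\alpha$ to $\kappa(1+\alpha)$ is real: near the start $\eps/u_n^{1+\alpha}$ is comparable to $1/\Delta$, not uniformly negligible, so one cannot afford $\kappa=1$.)

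The step I expect to be the main obstacle is the $\Delta$-choice and the accompanying induction: one must control simultaneously the near-$p_0$ regime (both drift and noise tiny), the transition near $u_*$, and the exit from $U$, and must also verify carefully that the relevant component of $\hI$ is genuinely forward-invariant for the perturbed orbit up to time $E$ for all $|t|\le\eps$. Once the inequality $\tfrac12|f(x_n)-x_n|\le u_{n+1}-u_n\le\tfrac32|f(x_n)-x_n|$ is secured, the comparison arguments for $E$ and the telescoping for the derivatives are standard.
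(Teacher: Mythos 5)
Your proposal is correct and takes essentially the same route as the paper's proof: compare the perturbed orbit to a deterministic Pomeau--Manneville orbit (using $x\notin I_{\Delta\eps}$ to make the noise a small fraction of the drift), show that the increments of $u_n^{-\alpha}$ are uniformly bounded above and below to get the escape-time estimate, and control the derivative product along the same orbit. The only real differences are bookkeeping---the paper absorbs the noise into modified constants $A_0<A<A_1$ so that $x+A_0x^{1+\alpha}\le f_t(x)\le x+A_1x^{1+\alpha}$ and then derives explicit two-sided bounds for $x_j^\alpha$ in terms of $m_0-j$ before bounding $\prod(1+(A-\eta)x_j^\alpha)$, whereas you carry $\eta_n$ explicitly and telescope $\log Df(x_n)\ge\kappa(1+\alpha)\log(u_{n+1}/u_n)$; and the paper dispatches the repelling case with ``the other case is simpler'' while you spell it out. (Minor slip: where you write that the ratio ``exceeds $\kappa$'' you mean it exceeds $\kappa(1+\alpha)$; the point, correctly stated, is that $1/(1+O(1/\Delta))\ge\kappa$ for $\Delta$ large since $\kappa<1$.)
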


\begin{proof}
We shall only deal with the case $x\in I^\parab$ as the other case is simpler. Let $p_0\in \PP_0$ be such that $x\in I(p_0)$. For simplicity of notation, we shall assume $p_0=0$ and $x>0$. Let $A:=\lim_{y\to 0^+} (Df(y)-1)/y^\alpha$.
Fix a small constant $\eta>0$ such that
$\kappa\le (A-\eta)/(A+2\eta)$.
Then there exist $b>0$ small and $\Delta>0$ large such that when $0<x\le 2b$, $f(x)-x\ge \Delta\eps$ and $|t|\le \eps$, we have
$$x+2\eps\le x+ A_0 x^{1+\alpha}\le f_t(x)\le x+ {A_1}x^{1+\alpha},$$
where $A_0=\frac{A-\eta}{1+\alpha}$ and $A_1=\frac{A+\eta}{1+\alpha}$. For $x\in (-b, b)$ and $\bft\in\Omega_\eps^\N$, let $m_0(x,\bft)$ be the minimal positive integer such that $f_\bft^{m_0}(x)>b$.
Note that $E(x,\bft)-m_0(x,\bft)$ is a non-negative integer which is bounded from above (when $b$ is fixed).
So we may assume $x\le b$ and show that the assertions are true when $E(x,\bft)$ is replaced by $m_0(x,\bft)$.

To this end, let us fix $(x,\bft)$ and write $m_0=m_0(x,\bft)$, $x_i=f_\bft^i(x)$. Then
$$x_i+A_0 x_i^{1+\alpha} \le x_{i+1}\le x_i+ A_1 x_i^{1+\alpha},$$
hence
$$\frac{1}{x_i^\alpha} \frac{1}{(1+A_1 x_i^\alpha)^\alpha}\le \frac{1}{x_{i+1}^\alpha}\le \frac{1}{x_i^\alpha} \frac{1}{(1+A_0 x_i^\alpha)^\alpha}.$$
Reducing $b>0$ if necessary, this implies that
$$(A-2\eta) \frac{\alpha}{1+\alpha}\le \frac{1}{x_{i}^\alpha}-\frac{1}{x_{i+1}^\alpha}\le (A+2\eta) \frac{\alpha}{1+\alpha}.$$
Summing over $i=0,1,\ldots, m_0-1$, we obtain
$$(A-2\eta)\frac{\alpha}{1+\alpha} m_0 \le \frac{1}{x^\alpha} -\frac{1}{x_{m_0}^\alpha} \le (A+2\eta)\frac{\alpha}{1+\alpha} m_0.$$
Thus
$$\frac{1+\alpha}{(A+2\eta)\alpha m_0 + (1+\alpha) b^{-\alpha}}\le x^\alpha \le \frac{1+\alpha}{(A-2\eta)\alpha m_0},$$
which implies (1).
Similarly,  for $j=0,\dots,m_0-1$,
$$\frac{1+\alpha}{(A+2\eta)\alpha (m_0-j) + (1+\alpha) b^{-\alpha}} \le x_j^\alpha\le \frac{1+\alpha}{(A-2\eta)\alpha (m_0-j)}.$$
Thus
\begin{multline*}
|Df_\bft^{m_0}(x)| =\prod_{j=0}^{m_0-1}|Df_{t_j}(x_j)|\ge \prod_{j=0}^{m_0-1} \left( 1+ (A-\eta) x_j^{\alpha}\right)
\\ \gtrsim  m_0^{\kappa(1+\alpha)/\alpha}\gtrsim |x|^{-\kappa(1+\alpha)}.
\end{multline*}
This proves the statement (2).
\end{proof}

\begin{proof}[Proof of Proposition~\ref{prop:escape}]
The statement (1) follows directly from Lemma~\ref{lem:bindescape} by choosing $K_*$ suitably large.
To prove the statement (2), we shall only consider the case $x\in I^\parab$, as the other case is simpler.
We need the following two Claims.

{\bf Claim 1. } For each $z\in I$ and $\bfu\in \Omega_\eps^\N$ and $z'\in U^{(E)}(z,\bfu)$, we have
\begin{equation}\label{eqn:m0compare}
E(z,\bfu)+1\ge E(z',\bfu)\ge E(z,\bfu)-1
\end{equation}
for $\eps>0$ small.
To prove the claim, write $E:=E(z,\bfu)$ and $E': = E(z',\bft)$. Without loss of generality, assume $E'< E$. Note that $\dist(f_\bfu^{E'}(z'), f_\bfu^{E'}(z))\le \dist(f_\bfu^{E}(z'), f_\bfu^E(z))< \tau_*$. Since $f_\bft^{E'}(z')\not\in I$, the choice of $\tau_*$ guarantees that
$E-E'=E(f_\bfu^{E'}(z'),\sigma^{E'}\bfu)=1$, provided that $\eps>0$ is small enough.
This proves Claim 1.

{\bf Claim 2.} There exists a constant $c>0$ such that for each $z\in I^\parab$, $\bfu\in\Omega_\eps^\N$, we have
$$Df_\bft^{E(z,\bfu)}(z)\ge c \min (E(z,\bfu)^{\kappa(1+\alpha)/\alpha},\eps^{-\kappa}).$$
provided that $\eps>0$ is small enough.

Indeed, by Lemma~\ref{lem:bindescape}, there exists $\eta>0$ such that the estimate holds when $E(z,\bfu)\le \eta \eps^{-\alpha/(1+\alpha)}$.
When $E(z,\bfu)> \eta \eps^{-\alpha/(1+\alpha)}$, there exists a minimal $k$ such that $E(f_\bfu^k(u),\sigma^k\bfu)=E(z,\bfu)-k\le \eta \eps^{-\alpha/(1+\alpha)}$, so that
$$Df_\bfu^{E(z,\bfu)}(y)\ge Df_{\sigma^k\bfu}^{E(z,\bfu)-k} (f_\sigma^k(z))\gtrsim (E(z,\bfu)-k)^{\kappa(1+\alpha)/\alpha}\asymp \eps^{-\kappa}.$$
Thus the claim holds.

Now let us fix $x\in I^\parab$ and $\bft\in \Omega_\eps^\N$ with $E:=E(x,\bft)\le \eps^{-\kappa \alpha}$.
Then for each $0\le i< E$ we have
$$E(f_\bft^i(x),\sigma^i\bft)=E-i.$$
For each $y\in U:=U^{(E)}(x,\bft)$, $f_\bft^i(y)\in U^{(E-i)}(f_\bft^i(x),\sigma^i(\bft))$,  and so by Claim 1,
$$E-i-1\le E(f_\bft^i(y),\sigma^i\bft)\le E-i+1.$$
Applying Claim 2 to $(z,\bfu)=(f_\bft^i(y),\sigma^i(y))$, we obtain that
$$Df_{\sigma^i\bft}^{(E-i)}(f_\bft^i(y))\asymp Df_{\sigma^i\bft}^{(E(z,\bfu))}(f_\bft^i(y))\ge c'' \min ((E-i)^{\kappa(1+\alpha)/\alpha},\eps^{-\kappa}).$$
Thus, because $E\le \eps^{-\kappa \alpha}$,
$$\sL^{(E)}(x,\bft)\lesssim \sum_{i=0}^{E-1} \max\left((E-i)^{-\kappa(1+\alpha)}, \eps^{\kappa\alpha} \right)\asymp 1.$$
\end{proof}

\subsection{Tail estimate for escaping time}
\begin{prop}\label{prop:tailE}
There exists a constant $C>0$ such that
 $$\mathbb{P}_\eps \left(\{E(x,\bft)\ge m\}\right) \le \max\left( C m^{-1/\alpha}, 8\eps\right).$$
\end{prop}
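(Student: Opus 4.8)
The plan is to split $\hI$ into a ``deterministic'' part $\qW:=\hI\setminus I_{\Delta_0\eps}$, where $\Delta_0:=\max(\Delta,\Delta_*)$ with $\Delta$ from Lemma~\ref{lem:bindescape} and $\Delta_*$ from Proposition~\ref{prop:escape}, and the small neighbourhood $\hI_*:=I_{\Delta_0\eps}$ of $\PP_0$ (of diameter $\asymp\eps^{1/(1+\alpha)}$ along the parabolic sides), and to bound the two contributions to $\mathbb{P}_\eps(E\ge m)$ separately. On $\qW$ the escape is deterministic: for $x\in\qW$ one has $\dist(f(x),x)>\Delta_0\eps$, so Lemma~\ref{lem:bindescape}(1) gives $E(x,\bft)\le C\dist(x,\PP_0)^{-\alpha}$ on a parabolic side and Lemma~\ref{lem:bindescape}(1$'$) gives $E(x,\bft)\lesssim\log\dist(x,\PP_0)^{-1}$ on a repelling side, both uniformly in $\bft$; hence $\{x\in\qW:E(x,\bft)\ge m\}\subseteq\{\dist(x,\PP_0)\lesssim m^{-1/\alpha}\}$ for every $\bft$, so that $\mathbb{P}_\eps(x\in\qW,\ E\ge m)\lesssim m^{-1/\alpha}$.

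For the deep region, let $T(x,\bft)$ be the first time $i$ with $f^i_\bft(x)\notin\hI_*$. On exit the orbit lies at distance $\asymp\eps^{1/(1+\alpha)}$ of $\PP_0$, so that $\dist(f(f^T_\bft(x)),f^T_\bft(x))\ge\Delta_*\eps$ and Proposition~\ref{prop:escape}(1) bounds the remaining escape time by $\eps^{-\alpha/(1+\alpha)}$; thus $E(x,\bft)\le T(x,\bft)+\eps^{-\alpha/(1+\alpha)}$. When $m\le 2\eps^{-\alpha/(1+\alpha)}$ I would simply use $\mathbb{P}_\eps(x\in\hI_*,\ E\ge m)\le\Leb(\hI_*)\asymp\eps^{1/(1+\alpha)}\lesssim m^{-1/\alpha}$; otherwise it remains to bound $\mathbb{P}_\eps\big(x\in\hI_*,\ T(x,\bft)\ge m'\big)$ with $m':=m-\eps^{-\alpha/(1+\alpha)}\ge m/2$.

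The heart of the matter --- and the only place where the precise (smoothing) form of the noise enters, cf.\ the Remark after the Main Theorem --- is a random-walk estimate for $T$ inside $\hI_*$. There the orbit is $x_{i+1}=x_i+(f(x_i)-x_i)+t_i$ with deterministic drift of size $\asymp\dist(x_i,\PP_0)^{1+\alpha}$ pushing away from $\PP_0$ and i.i.d.\ mean-zero perturbations of size $\eps$, and the drift overtakes the diffusive spreading precisely at the scale $\varrho:=\eps^{2/(2+\alpha)}$, the distance at which the deterministic escape time $\dist^{-\alpha}$ matches the diffusive escape time $(\dist/\eps)^2$. Using that $\theta_\eps$ has density $\le(2\eps)^{-1}$ (more generally the smoothing hypothesis of the Remark), I would establish, for $x\in\hI_*$ with $\delta_0:=\dist(x,\PP_0)$ and some fixed $p>1/\alpha$, with a constant $C$ independent of $\eps$,
\[
\theta_\eps^\N\big(\{\bft:\ T(x,\bft)\ge k\}\big)\ \le\ C\,\big(1+k\,\max(\delta_0,\varrho)^{\alpha}\big)^{-p}\qquad(k\ge 1).
\]
The proof is by a dyadic decomposition over the distance scales $\varrho,2\varrho,4\varrho,\dots\asymp\eps^{1/(1+\alpha)}$: once $\dist(x_i,\PP_0)\ge 2\varrho$ the drift dominates the noise at that scale, so the orbit moves outward, reaching the next scale in a controlled number of steps with an exponential tail and returning to a strictly smaller scale only with probability $\le\exp(-c(\dist(x_i,\PP_0)/\varrho)^{2+\alpha})$ (the scale-function estimate for the associated one-dimensional diffusion), while on $\{\dist(\cdot,\PP_0)\le 2\varrho\}$ the smoothing of $\theta_\eps$ forces an exit of this $\asymp\varrho$-sized set within $\asymp\varrho^{-\alpha}$ steps with geometric tail. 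Summing over scales (using Minkowski for the $L^p$-norms of the successive sojourn times, which decay geometrically in the scale) gives the displayed estimate; integrating it over $x\in\hI_*$ via $\int_0^\infty(1+k\delta^\alpha)^{-p}\,d\delta\lesssim k^{-1/\alpha}$ (valid since $p>1/\alpha$) and $\varrho\,(1+k\varrho^\alpha)^{-p}\lesssim k^{-1/\alpha}$ yields $\mathbb{P}_\eps(x\in\hI_*,\ T\ge m')\lesssim (m')^{-1/\alpha}\lesssim m^{-1/\alpha}$.

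Putting the two pieces together, $\mathbb{P}_\eps(E\ge m)\lesssim m^{-1/\alpha}$ for all $m\ge1$, and combined with the trivial bound $\mathbb{P}_\eps(\cdot)\le1$ this gives $\mathbb{P}_\eps(E\ge m)\le\max(Cm^{-1/\alpha},8\eps)$ after adjusting $C$. I expect the main obstacle to be the random-walk estimate inside $\hI_*$: obtaining an $\eps$-uniform polynomial-in-$k$ tail for the escape time as the orbit crosses the diffusion/drift transition scale $\varrho$, where the per-step drift and the per-step diffusive fluctuation are comparable; everything else is bookkeeping on top of the deterministic estimates of Section~\ref{sec:escape}.
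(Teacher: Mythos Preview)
Your approach is genuinely different from the paper's and, if the random-walk estimate were established, would give a strictly stronger conclusion (no $8\eps$ floor). The paper proceeds \emph{backwards} rather than forwards: writing $X_m^\bft=\{x\in I(p_0):E(x,\bft)\ge m\}$ (an interval) and $g$ for the local inverse branch of $f$ near $p_0$, one has $X_{m+1}^{t\bfs}=g(X_m^{\bfs}+t)$, and the single elementary computation
\[
\int_{-\eps}^\eps|g(J+t)|\,d\theta_\eps(t)\;\le\;|J|-A_1\bigl|J\setminus[-2\eps,2\eps]\bigr|^{1+\alpha}
\]
(immediate from $Dg(x)\le 1-A_0|x|^\alpha$) yields, after integrating over $\bfs$ and using $|J\setminus[-2\eps,2\eps]|\ge|J|/2$ when $|J|\ge 8\eps$, the recursion $\mathbb{P}_\eps(X_{m+1})\le\max\bigl(8\eps,\,\mathbb{P}_\eps(X_m)-A\,\mathbb{P}_\eps(X_m)^{1+\alpha}\bigr)$. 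Comparison with $Cm^{-1/\alpha}$ finishes the proof in a few lines --- no orbit-tracking, no scales, no random-walk analysis.

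Your forward argument is sound in outline: the deterministic part on $I\setminus I_{\Delta_0\eps}$ is correct, the crossover scale $\varrho=\eps^{2/(2+\alpha)}$ is the right one, and your final integration checks out (the term $\varrho(1+k\varrho^\alpha)^{-p}\lesssim k^{-1/\alpha}$ holds precisely because $p>1/\alpha$). But, as you yourself flag, the tail bound for $T$ inside $I_*$ is only sketched; making the dyadic-shell argument rigorous --- uniform-in-$\eps$ return probabilities across scales, handling orbits that the noise pushes across $p_0$ to the other (possibly repelling) side, summing the sojourn-time tails --- would take substantially more work than the paper's half-page recursion. Since the downstream applications (Corollary~\ref{cor:tailmsmall} and the proof of Theorem~\ref{thm:tail-estimate}) only invoke the bound in the regime $m\le\eps^{-\kappa\alpha}$, where $m^{-1/\alpha}\gg 8\eps$ anyway, the floor costs nothing and the paper's shortcut is much preferred.
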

\begin{proof}
Fix $p_0\in \PP_0$. Let $X_m=\{(x,\bft)\in I(p_0)\times \Omega_\eps^\N: E(x,\bft)\ge m\}$.
For simplicity of notation, assume $p_0=0$. Let $g: I(0)\to f^{-1}(I(0))\cap I(0)$ denote the inverse branch of $f$.
By our assumption on $f$,  there exists
$A_0>0$ such that
$$Dg(x)\le 1- A_0 |x|^{\alpha}\mbox{ for all } x\in I(0).$$

For each interval $J\subset I(0)$, let $\hJ=J\setminus [-2\eps, 2\eps]$.

{\bf Claim.} There exists a constant $A_1>0$ such that for each interval $J\subset I(0)$, we have 
\begin{equation}\label{eqn:inverseJ}
\int_{-\eps}^\eps |g(J+t)| d\theta_\eps(t)\le |J|-A_1|\hJ|^{1+\alpha}.
\end{equation}
Indeed, for each $y\in\hJ$ and each $t\in [-\eps, \eps]$, we have
$|y+t|\ge |y|/2$, hence
$$\int_{-\eps}^\eps |y+t|^\alpha d\theta_\eps(t)\ge \left(\frac{|y|}{2}\right)^\alpha.$$
Thus
\begin{align*}
\int_{-\eps}^\eps |g(J+t)| d\theta_\eps(t)& =
\int_{y\in J} \int_{-\eps}^\eps g'(y+t) d\theta_\eps(t) dy\\
&=\int_{J\setminus \hJ} \int_{-\eps}^\eps g'(y+t) d\theta_\eps(t) dy
+\int_{\hJ} \int_{-\eps}^\eps g'(y+t) d\theta_\eps(t) dy\\
& \le |J\setminus \hJ| + \int_{\hJ} \int_{-\eps}^\eps (1- A_0 |y+t|^\alpha) d\theta_\eps(t) dy\\
& \le |J|-\frac{A_0}{2^\alpha} \int_{y\in \hJ} |y|^\alpha dy\\
& \le |J|-A_1|\hJ|^{1+\alpha},
\end{align*}
where $A_1=A_0 2^{-\alpha} /(1+\alpha)$. The claim is proved.

It follows that
\begin{align*}
\mathbb{P}_\eps(X_{m+1})& = \int_{\bfs\in \Omega_\eps^\N} \int_{-\eps}^\eps |g(X_m^\bfs +t)| d\theta_\eps(t) d\theta_\eps^\N(\bfs)\\
&\le \int_{\bfs\in\Omega_\eps^\N} \left( |X_m^\bfs| -A_1 |\hX_m^\bfs|^{1+\alpha}\right) d\theta_\eps^\N(\bfs)\\
& = \mathbb{P}_\eps(X_m) - A_1 \int_{\bfs\in\Omega_\eps^\N} |\hX_m^\bfs|^{1+\alpha} d\theta_\eps^\N (\bfs)\\
& \le \mathbb{P}_\eps (X_m) -A_1 \left(\int_{\bfs\in\Omega_\eps^\N} |\hX_m^\bfs| d\theta_\eps^\N(\bfs)\right)^{1+\alpha}\\
&= \mathbb{P}_\eps(X_m)-A_1 \mathbb{P}_\eps(\hX_m)^{1+\alpha},
\end{align*}
where $\hX_m=X\setminus ([-2\eps,2\eps]\times \Omega_\eps^\N)$. So if $\mathbb{P}_\eps(X_m)\ge 8\eps$ then
$\mathbb{P}_\eps(\hX_m)\ge \mathbb{P}_\eps(X_m)/2$. Therefore
\begin{equation}\label{eqn:Pxmrec}
\mathbb{P}_\eps(X_{m+1})\le \max\left( 8\eps, \mathbb{P}_\eps(X_m)-A \mathbb{P}_\eps (X_m)^{1+\alpha}\right),
\end{equation}
where $A=A_1/2^{1+\alpha}$.

Finally, choose a large constant $C>1$ such that
$$Cm^{-1/\alpha} -A(Cm^{-1/\alpha})^{1+\alpha} \le C (m+1)^{-1/\alpha}$$
holds for all $m\ge 1$.   The desired  upper bounds of $\mathbb{P}_\eps(X_m)$ follows from (\ref{eqn:Pxmrec}) by an easy induction on $m$.
\end{proof}

Combining Propositions~\ref{prop:escape} and~\ref{prop:tailE}, we obtain
\begin{coro}\label{cor:tailmsmall}
There exists $C>0$ such that the following holds provided that $\eps>0$ is small enough. For each $m\le \eps^{-\kappa\alpha}$ we have
$$\mathbb{P}_\eps \left(m_{K_*}\ge m \right)\le C m^{-1/\alpha}.$$
\end{coro}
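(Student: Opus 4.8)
The plan is to deduce the estimate from Propositions~\ref{prop:escape} and~\ref{prop:tailE} through the inclusion of events
$$\{(x,\bft):m_{K_*}(x,\bft)\ge m\}\ \subseteq\ \{(x,\bft):E(x,\bft)\ge m\},$$
which I claim holds whenever $2\le m\le\eps^{-\kappa\alpha}$ and $\eps>0$ is small (the case $m=1$ being trivial, since $\mathbb{P}_\eps(m_{K_*}\ge1)=1\le C$ as soon as $C\ge1$). Granting this inclusion, one immediately gets $\mathbb{P}_\eps(m_{K_*}\ge m)\le\mathbb{P}_\eps(E\ge m)$, and Proposition~\ref{prop:tailE} then finishes the argument up to a comparison of the two terms in the maximum on its right-hand side.

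To establish the inclusion I would fix $(x,\bft)$ with $E(x,\bft)<m$ and show $m_{K_*}(x,\bft)<m$, splitting into two cases. If $x\notin I$, then $E(x,\bft)=0$; since $Df_{t_0}=Df\ge1$ everywhere by condition (2) in the definition of $\sE_\alpha$, every summand of $\sL^{(1)}(x,\bft)=\sup_{y\in U^{(1)}(x,\bft)}\bigl(Df_{t_0}(y^+)\bigr)^{-\alpha}$ is $\le1$, so $\sL^{(1)}(x,\bft)\le1<K_*$; as also $1\ge E(x,\bft)$, the integer $1$ is a $K_*$-scale expansion time of $(x,\bft)$, whence $m_{K_*}(x,\bft)=1<m$. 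If $x\in I$, then $E(x,\bft)<m\le\eps^{-\kappa\alpha}$, so Proposition~\ref{prop:escape}(2) applies and gives $m_{K_*}(x,\bft)=E(x,\bft)<m$. This proves the inclusion.

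It then remains to combine the two inputs. Proposition~\ref{prop:tailE} yields $\mathbb{P}_\eps(m_{K_*}\ge m)\le\max(C'm^{-1/\alpha},8\eps)$, where $C'$ is the constant there. Since $m\le\eps^{-\kappa\alpha}$ forces $m^{-1/\alpha}\ge\eps^{\kappa}$, and since the section's standing choice $\alpha<\kappa<1$ makes $\eps^{\kappa-1}\to\infty$ as $\eps\to0$, we get $8\eps\le\eps^{\kappa}\le m^{-1/\alpha}$ once $\eps$ is small; hence the maximum equals $C'm^{-1/\alpha}$, and taking $C:=\max(C',1)$ gives the claimed bound. I expect no real obstacle here, as this is only an assembly of two already-proven propositions; the only points needing a little attention are the handling of the pre-entry segment when $x\notin I$ (dealt with by the elementary bound $\sL^{(1)}\le1<K_*$) and recognizing that the restriction $m\le\eps^{-\kappa\alpha}$ is exactly what makes the polynomial tail of Proposition~\ref{prop:tailE} dominate its $O(\eps)$ floor.
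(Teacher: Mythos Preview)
Your proposal is correct and follows essentially the same approach as the paper: both reduce the claim to the inclusion $\{m_{K_*}\ge m\}\subseteq\{E\ge m\}$ via Proposition~\ref{prop:escape}(2), then apply Proposition~\ref{prop:tailE} together with the observation that $m^{-1/\alpha}\ge\eps^{\kappa}\gg\eps$ kills the $8\eps$ floor. Your version is in fact slightly more careful than the paper's, since you explicitly dispose of the case $x\notin I$ (where Proposition~\ref{prop:escape} does not literally apply) via the elementary bound $\sL^{(1)}\le 1<K_*$, whereas the paper's one-line proof leaves this implicit.
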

\begin{proof} For $m\le \eps^{-\kappa\alpha}$, Proposition~\ref{prop:escape} asserts that $m_{K_*}(x,\bft)\ge m$ implies that $E(x,\bft)\ge m$.
Since $m^{-1/\alpha}\ge \eps^\kappa>>\eps$,  the inequality follows from Proposition~\ref{prop:tailE}.
\end{proof}
\section{Distortion estimates associated to essential return times}
\label{sec:essentialret}

We continue to fix a constant $\kappa\in (0,1)$ such that (\ref{eqn:kappa}) holds. We also fix an arbitrary constant $\gamma> 1-\kappa$.
\subsection{The first essential return time}
If the point $x$ lingers a long time near a neutral fixed point, then the expansion
is no longer related to $E(x,\bft)$ and the term $\sL^{(m)}(x,\bft)$ controlling
distortion also gets large in terms of $m$:

\begin{lemma}\label{lem:distgeneral}
There exists a constant $K_1>0$ such that the following holds provided that $\eps>0$ is small enough.
For $x\in \T$ and $\bft\in [-\eps,\eps]^\N$, if $m$ is a positive integer such that $E(x,\bft)\le m\le r_1(x,\bft)$, then
$$\sL^{(m)}(x,\bft)\le K_1+ K_1m\eps^{\kappa \alpha}.$$
Moreover, if $x\in I\setminus I_{\Delta_*\eps}$, then $\sL^{(m)}(x,\bft)\le K_1$, and if $x\in I_{\Delta_*\eps}$ then
$\Lambda^{(m)}(x,\bft)\ge K_1^{-1}\eps^{-\kappa}.$
\end{lemma}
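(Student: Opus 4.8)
The plan is to control $\sL^{(m)}(x,\bft)$ by splitting the orbit at the escape time $E:=E(x,\bft)$ and applying the subadditivity of Lemma~\ref{lem:slrec}. Writing $m=E+(m-E)$ gives
\[
\sL^{(m)}(x,\bft)\ \le\ \sL^{(m-E)}\big(f_\bft^E(x),\sigma^E\bft\big)\ +\ \Lambda^{-\alpha}\,\sL^{(E)}(x,\bft),\qquad \Lambda:=\Lambda^{(m-E)}\big(f_\bft^E(x),\sigma^E\bft\big),
\]
and the two terms are governed by different mechanisms. Since $E\le m\le r_1(x,\bft)$, every iterate $f_\bft^E(x),\dots,f_\bft^{m-1}(x)$ lies outside $I$, and the choice of $\tau_*,\lambda_*$ (argued exactly as in Lemma~\ref{lem:constantstau*lambda*} and using (i)--(iv)) shows that the first $m-E$ iterates of $U^{(m-E)}(f_\bft^E(x),\sigma^E\bft)$ stay in the region where $Df\ge\lambda_*>1$. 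Hence $\Lambda\ge\lambda_*^{\,m-E}\ge1$, so $\Lambda^{-\alpha}\le1$, and $\sL^{(m-E)}(f_\bft^E(x),\sigma^E\bft)\le\sum_{k\ge1}\lambda_*^{-\alpha k}=:K'$.

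For the escape term $\sL^{(E)}(x,\bft)$ I would distinguish three cases. If $x\notin I$ then $E=0$ and the term is $0$. If the orbit of $x$ escapes through a repelling component of $I$, then by property (iv) it stays in that component up to time $E$, so $Df\ge\lambda_{\rep}:=\inf_{I^{\rep}}Df>1$ along it and $\sL^{(E)}(x,\bft)\le\sum_{k\ge1}\lambda_{\rep}^{-\alpha k}$, a constant, no matter how large $E$ is. If the orbit escapes through a parabolic component, I would feed Claim~2 in the proof of Proposition~\ref{prop:escape} into the sum defining $\sL^{(E)}$: for $y\in U^{(E)}(x,\bft)$ one has $|E(y,\bft)-E|\le1$ by Claim~1 and each $f_\bft^j(y)$ lies in $I^{\parab}$, so $Df_{\sigma^j\bft}^{E-j}(f_\bft^j(y))\gtrsim\min\big((E-j)^{\kappa(1+\alpha)/\alpha},\eps^{-\kappa}\big)$, whence, using $\kappa(1+\alpha)>1$ to sum the convergent part,
\[
\sL^{(E)}(x,\bft)\ \lesssim\ \sum_{i=1}^{E}\max\!\big(i^{-\kappa(1+\alpha)},\ \eps^{\kappa\alpha}\big)\ \lesssim\ 1+E\eps^{\kappa\alpha}\ \le\ 1+m\eps^{\kappa\alpha}.
\]
Together with the post-escape bound this yields the first assertion $\sL^{(m)}(x,\bft)\le K_1+K_1m\eps^{\kappa\alpha}$. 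If moreover $x\in I\setminus I_{\Delta_*\eps}$, then either $x\in I^{\rep}$ (already bounded) or $x\in I^{\parab}$ with $\dist(f(x),x)>\Delta_*\eps$, in which case Proposition~\ref{prop:escape}(1) gives $E\le\eps^{-\alpha/(1+\alpha)}\le\eps^{-\kappa\alpha}$, so $\sL^{(E)}(x,\bft)\le K_*$ by Proposition~\ref{prop:escape}(2) (equivalently, the $\eps^{\kappa\alpha}$-tail above is empty); either way $\sL^{(m)}(x,\bft)\le K_1$.

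The remaining claim, $\Lambda^{(m)}(x,\bft)\ge K_1^{-1}\eps^{-\kappa}$ for $x\in I_{\Delta_*\eps}$, is the crux. First, such an $x$ has a long escape time: if the orbit leaves through a parabolic component, applying Lemma~\ref{lem:bindescape}(1) to the first iterate that leaves $I_{\Delta\eps}$ — which is still $O(\eps^{1/(1+\alpha)})$ from $\PP_0$ — gives $E(x,\bft)\gtrsim\eps^{-\alpha/(1+\alpha)}$; if it leaves through a repelling component, then $\dist(x,\PP_0)\lesssim\eps$, and comparing the displacement $\dist(f_\bft^E(x),\PP_0)\asymp1$ near the repelling fixed point with the initial distance $\dist(x,\PP_0)\lesssim\eps$ gives $E(x,\bft)\gtrsim\log(1/\eps)$ and, more to the point, $Df_\bft^E(x)\gtrsim\eps^{-1}$. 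In the parabolic case, by Claim~1 every $y\in U^{(m)}(x,\bft)\subset U^{(E)}(x,\bft)$ satisfies $E(y,\bft)\ge E(x,\bft)-1\gtrsim\eps^{-\alpha/(1+\alpha)}$, so $Df_\bft^{E(y,\bft)}(y)\gtrsim\eps^{-\kappa}$ — by Claim~2 if $y$'s orbit escapes through a parabolic component, and trivially (since $\lambda_{\rep}^{E(y,\bft)}$ is then enormous) if through a repelling one; absorbing the at most one missing bounded derivative and the post-escape factor $\ge1$ gives $Df_\bft^m(y)\gtrsim\eps^{-\kappa}$ for all such $y$, i.e.\ $\Lambda^{(m)}(x,\bft)\gtrsim\eps^{-\kappa}$. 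In the repelling case $\sL^{(m)}(x,\bft)\le K_1$ by the previous paragraph, so $f_\bft^m$ has bounded distortion on $U^{(m)}(x,\bft)$, and since $Df_\bft^m(x)\ge Df_\bft^E(x)\gtrsim\eps^{-1}$ this transfers to $\Lambda^{(m)}(x,\bft)\gtrsim\eps^{-1}\ge\eps^{-\kappa}$.

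I expect the main obstacle to be exactly this last estimate. The interval $U^{(m)}(x,\bft)$ need not be short, so one must rule out that it contains a point $y$ near the boundary of a parabolic component whose own escape time and expansion are only $O(1)$; in the parabolic case this is prevented by the position-independent lower bound $E(y,\bft)\ge E(x,\bft)-1$ coming from Claim~1, and in the repelling case one falls back on bounded distortion, which is available precisely because the escape is exponential and hence $\sL^{(m)}$ stays bounded even for large $E$. The rest is bookkeeping: the one-sidedness of the points of $\PP_0$ (each may be parabolic on one side and repelling on the other), the verification that the orbit of $y$ remains in a single component of $I$ up to its escape, and the degenerate case $x\in\PP_0$, which is handled as a nearby point on the side along which the orbit escapes.
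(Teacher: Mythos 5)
There is a genuine gap, located exactly where you predicted the difficulty would lie. When $x\in I_{\Delta_*\eps}$ you split the argument according to whether the orbit ``escapes through a parabolic component'' or ``through a repelling component,'' and in the repelling case you assert that ``by property (iv) it stays in that component up to time $E$,'' so that $Df\ge\lambda_{\rep}$ along the whole pre-escape orbit. Property (iv) only guarantees that the orbit stays in the same component of $I$, not of $I\setminus\PP_0$: since a point $p_0\in\PP_0$ may be repelling on one side and parabolic on the other, and the noise has size $\eps$, a random orbit starting within distance $O(\eps)$ of $p_0$ can be pushed across $p_0$ (in either direction). In particular a point in $I^{\rep}\cap I_{\Delta_*\eps}$ can be sent into $I^{\parab}$, linger there, and only later escape ``through the repelling side''; along such a piece of orbit the derivative is close to $1$, so the geometric bound $\sL^{(E)}\lesssim\sum\lambda_\rep^{-\alpha k}$ is simply false. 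The same unjustified ``the whole orbit stays on one side'' assumption recurs in your treatment of $\Lambda^{(m)}$: the lower bound $Df_\bft^{E(y,\bft)}(y)\ge\lambda_\rep^{E(y,\bft)}$ ``if through a repelling one,'' and the appeal to ``$\sL^{(m)}\le K_1$ by the previous paragraph'' to run a bounded-distortion argument in the repelling case, both inherit the gap. In the final paragraph you flag ``the verification that the orbit of $y$ remains in a single component of $I$ up to its escape'' as bookkeeping, but this is precisely the statement that fails, and it is the crux of the problem rather than an afterthought.

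The paper's proof is careful to never classify the escape direction while the orbit is inside $I_{\Delta_*\eps}$, which is exactly the region where crossings are possible. Instead it reduces to $m=E(x,\bft)$, $x\in I$, by Lemma~\ref{lem:slrec}; for $x\in I\setminus I_{\Delta_*\eps}$ it applies Proposition~\ref{prop:escape} directly (here $E\le\eps^{-\alpha/(1+\alpha)}\le\eps^{-\kappa\alpha}$ so $\sL^{(E)}\le K_*$, and no side-of-$p_0$ distinction is needed); and for $x\in I_{\Delta_*\eps}$ it takes $k$ to be the \emph{first exit time from $I_{\Delta_*\eps}$}, bounds the initial block trivially by $\sL^{(k)}\le k\le E$, and then applies Proposition~\ref{prop:escape} to $y=f_\bft^k(x)\notin I_{\Delta_*\eps}$ (a point at safe distance from $\PP_0$) to get $\sL^{(E-k)}(y,\bfs)\le K_*$ and $\Lambda^{(E-k)}(y,\bfs)\gtrsim\eps^{-\kappa}$, whence the crucial damping factor $\Lambda^{-\alpha}\lesssim\eps^{\kappa\alpha}$ multiplying the trivial bound. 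This is the step your argument is missing: the trivial bound on the dangerous initial block, paid for by the $\eps^{\kappa\alpha}$ damping, replaces any attempt to control the derivative inside $I_{\Delta_*\eps}$. Your direct summation of the Claim~2 bound would be a valid alternative for the part of the orbit that certifiably stays in $I^{\parab}$, and your $\Lambda^{(m)}\ge\Lambda^{(E)}$ observation is correct; but without the $\eps^{\kappa\alpha}$-damping device the case $x\in I_{\Delta_*\eps}$ cannot be closed.
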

\begin{proof}
Let $E=E(x,\bft)$. Suppose $m>E$. Then for each $y\in U^{(m-E)}(f_\bft^E(x), \sigma^E\bft)$, and $0\le k< m-E$, we have $f_{\sigma^E\bft}^k(y)\not\in f^{-1}(I)\cap I$, since
$f_{\sigma^E\bft}^k(f_\bft^E(x))\not\in I$ and $\dist(f_{\sigma^E\bft}^k(y),f_{\sigma^E\bft}^k(f_\bft^E(x)) )\le \tau_*$. Thus
$Df_{\sigma^E\bft}^k(y)\ge \lambda_*^k$ for all $0\le k<m-E$, which implies that
$$\sL^{(m-E)}(f_\bft^E(x), \sigma^E\bft)\le \sum_{j=1}^{m-E}\lambda_*^{-j\alpha}<\lambda_*^\alpha/(\lambda_*^\alpha-1),$$
and $$\Lambda^{(m)}(x,\bft)\ge \Lambda^{(m-E)}(f_\bft^E(x), \sigma^E\bft)\ge \lambda_*^{m-E}.$$
Since
$$\sL^{m}(x,\bft)\le \lambda_*^{-(m-E)\alpha} \sL^{E}(x,\bft) + \sL^{(m-E)} (f_\bft^E(x),\sigma^E\bft),$$
it suffices to prove the lemma in the case that $x\in I$ and $m=E$.

If $x\not\in I_{\Delta_*\eps}$, then by Proposition~\ref{prop:escape},
$\sL^{(E)}(x,\bft)\le K_*.$ So the lemma holds in this case. 
Now assume that $x\in I_{\Delta_*\eps}$. Let $k\ge 1$ be the minimal positive integer such that
$y:=f_\bft^k(x)\not\in I_{\Delta_*\eps}$ and let $\bfs=\sigma^k\bft$. Then $E:=E(x,\bft)=E(y,\bfs)+k$ and $\dist(f(y),y)\asymp \eps$.
By Proposition~\ref{prop:escape},
$\sL^{(E-k)}(y,\bfs)\le K_*$ and $\Lambda:=\Lambda^{(E-k)}(y,\bfs)\gtrsim \eps^{-\kappa}$.
Thus $$\Lambda^{(E)} (x,\bft)\ge \Lambda^{(E-k)}(y,\bfs)\gtrsim \eps^{-\kappa}.$$
Since $\sL^{(k)} (x,\bft)\le k\le E,$ we have
$$\sL^{(E)} (x,\bft)\le \sL^{(E-k)}(y,\bfs) + \Lambda^{-\alpha} \sL^{(k)} (x,\bft)\lesssim 1+ E\eps^{\kappa\alpha}.$$
Thus the lemma holds.
\end{proof}

\subsection{Distortion control outside a BAD set in $\Omega_\eps^\N$}\label{subsec:bad}

Let
$$\Omea(n)=\left\{\bft\in \Omega_\eps^\N: \sum_{k=0}^{n-1} |t_k|\le \frac{\eps n}{4}\right\}.$$
Clearly, for any $\bft\notin \Omea(n)$  one has
$$\#\{i; 0\le k\le n \mbox{ and }|t_k|\ge \epsilon/8\}\ge n/8.$$
Let $\varphi(\eps)=\log (1/\eps)$ and consider the following BAD set of $\textbf{t}$:
\begin{equation*}
\begin{array}{ll}
\BAD_\eps(N)&=\left\{ \bft: \exists 0\le i<  N \eps^{-\alpha} \varphi(\eps)\text{ and }\exists n\ge N\eps^{-\alpha} \right. \\
 &\quad \quad  \left. \mbox{ so that }  \sigma^i(\bft)\in \Omea(n)\right\}. \end{array}
\end{equation*}
\begin{lemma}\label{lem:badsmall} There exists $\rho>0$ such that for each $N=1,2,\ldots$,
$$\theta_\eps^\N \left(\BAD_\eps (N)\right)\le e^{-\rho N \eps^{-\alpha}}.$$
\end{lemma}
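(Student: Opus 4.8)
The plan is to reduce the statement to a large-deviations estimate for sums of i.i.d.\ bounded random variables, via a union bound over the starting index $i$. First I would fix $N$ and note that by definition, if $\bft\in\BAD_\eps(N)$ then there is an index $i$ with $0\le i<N\eps^{-\alpha}\varphi(\eps)$ and an $n\ge N\eps^{-\alpha}$ with $\sigma^i(\bft)\in\Omea(n)$, i.e.\ $\sum_{k=i}^{i+n-1}|t_k|\le \eps n/4$. The key point is that the coordinates $t_0,t_1,\dots$ are i.i.d.\ under $\theta_\eps^\N$, each uniform on $[-\eps,\eps]$, so $|t_k|/\eps$ are i.i.d.\ uniform on $[0,1]$ with mean $1/2$. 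Writing $S_n=\sum_{k=0}^{n-1}|t_k|$, a standard Chernoff/Cram\'er bound gives a constant $c>0$ (independent of $\eps$, since the random variables are scale-normalized) such that
$$\theta_\eps^\N\left(S_n\le \eps n/4\right)\le e^{-cn}$$
for every $n\ge 1$.

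Next I would handle the nuisance that the $n$ in the definition is not fixed but ranges over $n\ge N\eps^{-\alpha}$. For a fixed starting index $i$, the event $\{\exists n\ge N\eps^{-\alpha}:\ \sigma^i(\bft)\in\Omea(n)\}$ is contained in $\bigcup_{n\ge N\eps^{-\alpha}}\{\sum_{k=i}^{i+n-1}|t_k|\le \eps n/4\}$; by stationarity of the product measure under the shift, each such event has probability at most $e^{-cn}$, so summing the geometric series over $n\ge N\eps^{-\alpha}$ bounds the probability for a fixed $i$ by $C e^{-cN\eps^{-\alpha}}$ for a suitable constant $C$. Then a union bound over the at most $N\eps^{-\alpha}\varphi(\eps)=N\eps^{-\alpha}\log(1/\eps)$ admissible values of $i$ yields
$$\theta_\eps^\N\left(\BAD_\eps(N)\right)\le C\,N\eps^{-\alpha}\log(1/\eps)\,e^{-cN\eps^{-\alpha}}.$$
Finally, since $N\eps^{-\alpha}\log(1/\eps)\,e^{-(c/2)N\eps^{-\alpha}}$ is bounded by a constant for all $N\ge 1$ and all small $\eps$ (the polynomial-times-log factor is absorbed by half the exponent, using $N\eps^{-\alpha}\to\infty$), we get $\theta_\eps^\N(\BAD_\eps(N))\le e^{-\rho N\eps^{-\alpha}}$ with $\rho=c/2$, say, after possibly shrinking $\eps_0$.

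I do not expect a genuine obstacle here; the only points requiring a little care are (a) checking that the Chernoff constant $c$ can be taken independent of $\eps$ — which is immediate because rescaling by $\eps$ reduces everything to uniform$[0,1]$ variables — and (b) confirming the summability in $n$ and the absorption of the $N\eps^{-\alpha}\log(1/\eps)$ prefactor, which is routine once one writes $e^{-cN\eps^{-\alpha}}=e^{-(c/2)N\eps^{-\alpha}}\cdot e^{-(c/2)N\eps^{-\alpha}}$ and bounds the first factor times the prefactor by a constant. The same proof structure applies verbatim to the more general noise considered in the Remark, since only the one-step tail bound $\theta_\eps(\{g:\ g(x)\in E\})\le L(|E|/\eps)^{1/L}$ is used, which still gives an $\eps$-independent exponential moment bound for the normalized displacement.
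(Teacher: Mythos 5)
Your proof is correct and follows essentially the same route as the paper: a Chernoff/Cram\'er (large deviation) bound for the i.i.d.\ sums $\sum|t_k|$ giving $\theta_\eps^\N(\Omea(n))\le e^{-\rho_1 n}$, then shift-invariance plus a union bound over $i$ and $n$, and finally absorption of the polynomial prefactor $N\eps^{-\alpha}\varphi(\eps)$ by halving the exponent. Your write-up is actually a bit more careful than the paper's (which abbreviates the union bound and leaves the prefactor absorption implicit), but there is no difference in method.
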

\begin{proof} When the random variable $t$ is uniformly distributed over $[-\eps,\eps]$
then the expected value $\int_{t\in [-\eps,\eps]} |t|\, d\theta_\eps(t)$
of $|t|$ is equal to  $\eps/2$. Hence,
by the Large Deviation Principle, there exists $\rho_1>0$ such that $\theta_\eps^\N(\Omea(n))\le e^{-\rho_1 n}$ for each $n\ge 1$.
Since $\theta_\eps^\N$ is invariant under the shift map $\sigma$ we have
$$\theta_\eps^\N(\BAD_\eps (N))=\sum_{0\le i< N\eps^{-\alpha} \varphi(\eps)}\,\, \sum_{n\ge N\eps^{-\alpha}} e^{-\rho_1 n}\le C N \eps^{-\alpha}\varphi(\eps) e^{-\rho_1 N\eps^{-\alpha}}.$$
\end{proof}
For each integer $m$, let
\begin{equation}
\tm=\left[\frac{m\eps^\alpha}{\varphi(\eps)}\right]+1
\label{eq:tilde}
\end{equation}
where $\left[t \right]$
stands for the largest integer $\le t$ and so $t< [t]+1 \le t+1$.  Note that $\tm(x,\bft) \le
[ m \eps^\alpha/\varphi(\eps)]+1
 <<m$ when $\eps>0$ is small.

\begin{lemma} \label{lem:expnotbad}
There exists a constant $\xi>0$ such that if $\bft\in\Omega_\eps^\N\setminus \BAD_\eps(N)$ for some positive integer $N$, then for any $x\in \T$,
any $0\le i< N\eps^{-\alpha} \varphi(\eps)$ and any $n\ge N \eps^{-\alpha}$, we have
$$Df_{\sigma^i(\bft)}^n(f_\bft^i(x))\ge \exp (\xi n\eps^\alpha).$$
\end{lemma}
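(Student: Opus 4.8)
The plan is to extract from the hypothesis $\bft\notin\BAD_\eps(N)$ a large supply of ``kicks'' and to show that each kick produces a definite amount of expansion nearby. Fix $x\in\T$, an index $0\le i<N\eps^{-\alpha}\varphi(\eps)$ and $n\ge N\eps^{-\alpha}$; write $y_m=f_\bft^m(x)$ and note that $Df_{\sigma^i(\bft)}^n(y_i)=\prod_{k=0}^{n-1}Df(y_{i+k})$, since $f_t=f+t$ gives $Df_t\equiv Df$. As $\bft\notin\BAD_\eps(N)$ forces $\sigma^i(\bft)\notin\Omea(n)$, the counting remark after the definition of $\Omea(n)$ yields more than $n/8$ indices $j$ with $i\le j\le i+n$ and $|t_j|\ge\eps/8$; I call these the \emph{kicks}, and I call $m$ \emph{good} if $Df(y_m)\ge 1+c_1\eps^\alpha$, for a constant $c_1>0$ to be fixed. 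The crux is to show that, for $\eps$ small, every kick $j$ has a good index in $\{j,j+1\}$.

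I would prove this dichotomously. If $Df(y_j)\ge 1+c_1\eps^\alpha$, then $j$ is good. Otherwise, because $\lambda_*>1$ and $Df$ is bounded away from $1$ both away from $\PP_0$ and on every side of a point of $\PP_0$ on which the one-sided derivative exceeds $1$, for $\eps$ small the inequality $Df(y_j)<1+c_1\eps^\alpha$ forces $y_j\in I$ to lie on a side of some $p\in\PP_0$ at which $Df(p^\pm)=1$. On that side condition~\eqref{classEalpha} gives $Df(y)-1\ge c'\dist(y,p)^\alpha$ throughout $I(p)$ (shrinking $I$ once and for all), so taking $c_1\le c'32^{-\alpha}$ forces $\dist(y_j,p)<\eps/32$; the same asymptotics give $|f(y_j)-p|\le 2\dist(y_j,p)<\eps/16$ for $\eps$ small. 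Hence $y_{j+1}=f(y_j)+t_j$ satisfies $\dist(y_{j+1},p)\ge|t_j|-|f(y_j)-p|\ge\eps/8-\eps/16=\eps/16$ and $\dist(y_{j+1},p)\le|f(y_j)-p|+|t_j|<2\eps$; since the length of $I(p)$ is a fixed positive number independent of $\eps$, we get $y_{j+1}\in I(p)$ for $\eps$ small, and then condition~\eqref{classEalpha} (if $y_{j+1}$ is on a parabolic side of $p$) or the uniform lower bound on $Df$ (otherwise) gives $Df(y_{j+1})\ge 1+c_1\eps^\alpha$, provided $c_1\le\tfrac12\min(A_+,A_-)16^{-\alpha}$. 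This proves the claim.

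Granting the claim, the rest is bookkeeping: each of the $>n/8$ kicks contributes a good index in $\{j,j+1\}$, and each good index is produced by at most two kicks, so (discarding at most two boundary kicks) there are at least $n/32$ good indices among $i,\ldots,i+n-1$, valid since $n\ge N\eps^{-\alpha}$ is large for $\eps$ small. Since $Df\ge 1$ everywhere on $\T$, discarding the non-good factors gives
$$Df_{\sigma^i(\bft)}^n(y_i)=\prod_{k=0}^{n-1}Df(y_{i+k})\ \ge\ (1+c_1\eps^\alpha)^{n/32}\ \ge\ \exp\!\Big(\tfrac{c_1}{64}\,n\eps^\alpha\Big),$$
where I used $\log(1+u)\ge u/2$ for $u\in[0,1]$ together with $c_1\eps^\alpha\le 1$; this is the lemma with $\xi=c_1/64$. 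The one genuinely delicate step is the dichotomy of the second paragraph: one must verify that a displacement of size $\ge\eps/8$ really ejects a point lying within $O(\eps)$ of a neutral fixed point out to distance $\asymp\eps$ from it, and that the image stays in the \emph{same} component $I(p)$. This is exactly where the quantitative parabolic asymptotics of~\eqref{classEalpha} --- both the lower bound on $Df-1$ and the consequent control of $|f(y)-p|$ --- and the $\eps$-independence of the lengths of the components of $I$ are used.
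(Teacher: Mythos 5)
Your proof is correct and follows essentially the same route as the paper: extract $\ge n/8$ kicks from $\sigma^i(\bft)\notin\Omega_\eps^\N(n)$, show each kick forces at least one of $y_j, y_{j+1}$ to sit at distance $\gtrsim\eps$ from $\PP_0$ (hence $Df\ge 1+c\eps^\alpha$ there), and then count. The paper's version of the dichotomy is stated tersely (and contains a typo --- ``and'' should be ``or''); you have supplied the quantitative details (the $\eps/32$--$\eps/16$ bookkeeping via the parabolic asymptotics of condition~\eqref{classEalpha}, the verification that $y_{j+1}$ lands in the same component $I(p)$, and a slightly more conservative count of good indices), but the underlying idea and structure are the same.
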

\begin{proof}
The assumption $\bft\notin \BAD_\epsilon(N)$ implies
$\sigma^i(\bft)\notin \Omea(n)$.
So $\#\{0\le j< n; |t_{i+j}|\ge \eps/8\}\ge  n/8$. For each $j$ in this set,
either
$\dist(x_j,\PP_0)\ge \eps/20$ and $\dist(x_{j+1},\PP_0)\ge \eps/20$ and so the
derivative of $Df_\bft$ in $x_j$ or in $x_{j+1}$ is $\ge 1+C_1\eps^\alpha\ge \exp(C_2\eps^\alpha)$.
This happens at least $n/16$ times and for the other times
the derivative of $Df_\bft$ is $\ge 1$. Thus $Df^n_{\sigma^i(\bft)}(f_\bft^i(x))\ge \exp(\xi n\eps^\alpha)$ holds for some suitably chosen constant $\xi>0$.
\end{proof}

\begin{prop}
\label{prop:expansionnotbad}
There exists a constant $K_2>1$ such that for any $x\in \T$ and $\bft\in \Omega_\eps^\N$,
if $m=r_k(x,\bft)$ for some $k\ge 1$ and $\bft\not\in \BAD_\eps(\tm)$, then $$\sL^{(m)}(x, \bft)\le K_2\tm\eps^{-(1-\kappa)\alpha}.$$
\end{prop}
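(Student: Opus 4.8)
The plan is to split the length‑$m$ orbit segment ($m=r_k(x,\bft)$) into the blocks cut out by the essential returns and to use the hypothesis $\bft\notin\BAD_\eps(\tm)$ only on the blocks on which the orbit lingers near a neutral fixed point. Set $r_0=0$ and, for $1\le j\le k$, write $x_{j-1}=f_\bft^{r_{j-1}}(x)$, $\bft_{j-1}=\sigma^{r_{j-1}}\bft$ and $s_j=r_j-r_{j-1}$; then $s_j=r_1(x_{j-1},\bft_{j-1})$, $E(x_{j-1},\bft_{j-1})\le s_j$, and every index occurring in $[r_{j-1},r_j)$ is $<m\le\tm\eps^{-\alpha}\varphi(\eps)$ (the last step because $\tm\ge m\eps^\alpha/\varphi(\eps)$). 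Iterating the ``in particular'' clause of Lemma~\ref{lem:slrec} gives
$$\sL^{(m)}(x,\bft)\le\sum_{j=1}^{k}\Big(\prod_{l=j+1}^{k}\Lambda_l^{-\alpha}\Big)\,\sL^{(s_j)}(x_{j-1},\bft_{j-1}),\qquad \Lambda_l:=\Lambda^{(s_l)}(x_{l-1},\bft_{l-1}).$$
Call the $j$-th block \emph{shallow} if $x_{j-1}\notin I_{\Delta_*\eps}$ and \emph{deep} otherwise. For a shallow block one has $\sL^{(s_j)}(x_{j-1},\bft_{j-1})\le K_1'$ for a universal $K_1'$ (from Lemma~\ref{lem:distgeneral} when $x_{j-1}\in I$, and from $Df\ge\lambda_*$ on $\T\setminus I$ otherwise, which only concerns $j=1$), while $\Lambda_l\ge\lambda_*$ by Lemma~\ref{lem:constantstau*lambda*} and $Df\ge1$; for a deep block Lemma~\ref{lem:distgeneral} gives $\Lambda_l\ge K_1^{-1}\eps^{-\kappa}$ at once.

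The heart of the matter is the deep-block bound $\sL^{(s_j)}(x_{j-1},\bft_{j-1})\le K\,\tm\,\eps^{-(1-\kappa)\alpha}$ for a universal $K$. I would obtain it by running the proof of Lemma~\ref{lem:distgeneral} in the case $x\in I_{\Delta_*\eps}$ essentially verbatim: it reduces to estimating $\sL^{(E)}(x_{j-1},\bft_{j-1})$ with $E:=E(x_{j-1},\bft_{j-1})$, and, letting $k\ge1$ be minimal with $y:=f_{\bft_{j-1}}^k(x_{j-1})\notin I_{\Delta_*\eps}$ (so $k<E$, since the orbit cannot leave $I$ in one step starting from $I_{\Delta_*\eps}$) and $\bfs:=\sigma^k\bft_{j-1}$, Proposition~\ref{prop:escape} gives $\sL^{(E-k)}(y,\bfs)\le K_*$ and $\Lambda^{(E-k)}(y,\bfs)\gtrsim\eps^{-\kappa}$, whence Lemma~\ref{lem:slrec} yields $\sL^{(E)}(x_{j-1},\bft_{j-1})\lesssim1+\eps^{\kappa\alpha}\sL^{(k)}(x_{j-1},\bft_{j-1})$. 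The only change from Lemma~\ref{lem:distgeneral} is that I must replace the trivial bound $\sL^{(k)}\le k$ by
$$\sL^{(k)}(x_{j-1},\bft_{j-1})\le C_0\,\tm\,\eps^{-\alpha},$$
after which $\sL^{(E)}(x_{j-1},\bft_{j-1})\lesssim1+\tm\eps^{-(1-\kappa)\alpha}$, and the reduction step of Lemma~\ref{lem:distgeneral} (absorbing the post-escape piece outside $I$ into a constant) gives the desired bound on $\sL^{(s_j)}$, using $\tm\ge1$.

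For the bound on $\sL^{(k)}(x_{j-1},\bft_{j-1})$ note that on $[0,k)$ the orbit of $x_{j-1}$ stays in one component of $I_{\Delta_*\eps}$, so the hypothesis $\bft\notin\BAD_\eps(\tm)$ applies to every window of length $\ge\tm\eps^{-\alpha}$ inside $[r_{j-1},r_j)$. The technical point is an extension of Lemma~\ref{lem:expnotbad} to all of $U^{(k)}(x_{j-1},\bft_{j-1})$: rerunning its proof, with the observation that a point at distance $<\eps/20$ from a parabolic fixed point $p_0$ has image at distance $<2(\eps/20)=\eps/10$ from $p_0$ (as $Df\le2$ there for $\eps$ small), hence after a kick of modulus $\ge\eps/8$ lands at distance $\ge\eps/8-\eps/10=\eps/40$ from $p_0$, where $Df\ge1+C_1\eps^\alpha$, one gets a constant $\xi'>0$ such that
$$Df_{\sigma^i\bft_{j-1}}^{k-i}\big(f_{\bft_{j-1}}^i(w^+)\big)\ge\exp\!\big(\xi'(k-i)\eps^\alpha\big)\qquad\text{for all }w\in U^{(k)}(x_{j-1},\bft_{j-1}),\ k-i\ge\tm\eps^{-\alpha}.$$
Splitting the sum defining $\sL^{(k)}$ at $k-i=L:=\lceil\tm\eps^{-\alpha}\rceil$, the terms with $k-i<L$ number at most $L$ and are each $\le1$ since $Df\ge1$, while the terms with $k-i\ge L$ are dominated by $\sum_{n\ge L}e^{-\xi'\alpha n\eps^\alpha}\le(1-e^{-\xi'\alpha\eps^\alpha})^{-1}\le C\eps^{-\alpha}$ for $\eps$ small; hence $\sL^{(k)}\le L+C\eps^{-\alpha}\le C_0\tm\eps^{-\alpha}$.

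It remains to add up. Since $\Lambda_l\ge\lambda_*$ always and $\Lambda_l\ge K_1^{-1}\eps^{-\kappa}$ for deep $l$, each $\Lambda_l^{-\alpha}\le\lambda_*^{-\alpha}<1$, and each deep $l$ contributes a factor $\le K_1^\alpha\eps^{\kappa\alpha}$; the shallow blocks therefore contribute at most $K_1'\sum_{j\ge1}\lambda_*^{-\alpha j}=K_1'/(1-\lambda_*^{-\alpha})$, while, listing the deep indices as $j_1<\cdots<j_p$ and using $\prod_{l>j_a}\Lambda_l^{-\alpha}\le(K_1^\alpha\eps^{\kappa\alpha})^{p-a}$, the deep blocks contribute at most $K\tm\eps^{-(1-\kappa)\alpha}\sum_{b\ge0}(K_1^\alpha\eps^{\kappa\alpha})^b\le2K\tm\eps^{-(1-\kappa)\alpha}$ for $\eps$ small. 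Absorbing the $\eps$-independent constant into $\tm\eps^{-(1-\kappa)\alpha}\ge1$ yields $\sL^{(m)}(x,\bft)\le K_2\tm\eps^{-(1-\kappa)\alpha}$. The step I expect to cause the most trouble is the promotion of Lemma~\ref{lem:expnotbad} from the true orbit to all of $U^{(k)}$, together with keeping the constants in that factor-counting argument uniform near repelling as well as parabolic fixed points; once this is settled the rest is bookkeeping with already-proved block estimates.
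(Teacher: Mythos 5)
Your proof is correct and uses the same main ingredients as the paper's (decomposition along essential returns, the exponential‐growth consequence of $\bft\notin\BAD_\eps(\tm)$ via Lemma~\ref{lem:expnotbad}, and the gain of a factor $\eps^{\kappa\alpha}$ from $\Lambda\ge K_1^{-1}\eps^{-\kappa}$ across a deep block), but the bookkeeping is organised differently. You bound each deep block separately by $K\tm\eps^{-(1-\kappa)\alpha}$ and then sum the full block decomposition with the geometric weights $\prod_{l>j}\Lambda_l^{-\alpha}$. The paper instead singles out the \emph{last} index $i$ with $\Lambda_i\ge K_1^{-1}\eps^{-\kappa}$, chooses a single cut time $m_0$ inside the $i$-th block, bounds the tail $\sL^{(m-m_0)}$ by an absolute constant (all subsequent blocks are shallow), and applies the Lemma~\ref{lem:expnotbad} computation once to the entire head $\sL^{(m_0)}\lesssim\tm\eps^{-\alpha}$, finally combining via Lemma~\ref{lem:slrec} with the factor $\Lambda^{-\alpha}\le K_1^\alpha\eps^{\kappa\alpha}$. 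The paper's version avoids having to re-run the deep-block estimate inside each deep block and needs no iterated geometric sum over deep indices, which is why its proof is shorter; your version is a bit more redundant but equally valid.

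One remark on the step you flagged as the most delicate: the ``promotion'' of Lemma~\ref{lem:expnotbad} from the base orbit to all of $U^{(k)}$ is actually a non-issue, because the lemma as stated is quantified over \emph{all} $x\in\T$ (the hypothesis $\bft\notin\BAD_\eps(N)$ is purely a condition on the noise sequence, independent of the starting point). To bound $\Lambda^{(m_0-n)}(f_\bft^n(x),\sigma^n\bft)$, i.e.\ the infimum over $w\in U^{(m_0-n)}$ of $Df_{\sigma^n\bft}^{m_0-n}(w^+)$, you simply apply the lemma with the arbitrary point replaced by $w$; there is no need to re-derive a constant $\xi'$. (Indeed this is exactly how the paper uses it.) The only genuine check is the index constraint, and you verified it correctly: every intermediate time $n<m$ satisfies $n<\tm\eps^{-\alpha}\varphi(\eps)$ because $\tm>m\eps^\alpha/\varphi(\eps)$.
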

\begin{proof}
Let $r_0=0$ and for each $i=1,2,\ldots, k$, write $r_i=r_i(x,\bft)$,  $\sL_i=\sL^{(r_i-r_{i-1})}(f_\bft^{r_{i-1}}(x),\sigma^{r_{i-1}}\bft)$,
and $\Lambda_i=\Lambda^{(r_i-r_{i-1})}(f_\bft^{r_{i-1}}(x),\sigma^{r_{i-1}}\bft). $ Then $\Lambda_i\ge \lambda_*>1$ for all $1\le i\le k$.

{\em Case 1.} Assume $\Lambda_i < K_1^{-1}\eps^{-\kappa}$ for all $1\le i\le k$.
Then by Lemma~\ref{lem:distgeneral}, $\sL_i\le K_1$ holds for all $i=1,2,\ldots, k$.
Thus
$$\sL^{(m)}(x,\bft)\le \sL_k+\sum_{i=1}^{k-1} \sL_i (\prod_{i<j\le k}\Lambda_j)^{-\alpha}
\le C\sum_{i=1}^k \lambda_*^{-(i-1)\alpha} < K_1\lambda_*^\alpha/(\lambda_*^\alpha-1)$$
is bounded from above. Note that we do not need $\bft\not\in \BAD(\tm)$ in this case.

{\em Case 2.} There exists $i\in \{1,2,\ldots, k\}$ such that $\Lambda_i\ge K_1^{-1}\eps^{-\kappa}$. Let $i$ be maximal with this property. Let
$m_0$ be the maximal integer such that $r_{i-1}\le m_0< r_i$ and $\Lambda^{(r_i-m_0)}(f_\bft^{m_0}(x), \sigma^{m_0}\bft)\ge K_1^{-1}\eps^{-\kappa}$. Then as in Case 1, $\sL^{(m-m_0-1)}(f_\bft^{m_0+1}(x), \sigma^{m_0+1}\bft)$ and therefore
$\sL^{(m-m_0)}(f_\bft^{m_0}(x),\sigma^{m_0}\bft)$ is bounded from above by a constant.

Now let us use Lemma~\ref{lem:expnotbad} to estimate $\sL^{(m_0)}(x,\bft)$. Indeed, that lemma gives that
$D_n:=\Lambda^{(m_0-n)}(f_\bft^n(x),\sigma^n \bft)\ge \exp (\xi n\eps^\alpha)$ when $m_0-n\ge \tm \eps^{-\alpha}$.
When $m_0-n< \tm \eps^{-\alpha}$ we nevertheless have $D_n\ge 1$. Thus
$$\sL^{(m_0)}(x,t)\le \sum_{n=0}^{m_0-1} D_n^{-\alpha}\le \tm \eps^{-\alpha} + \sum_{n=1}^\infty e^{-\alpha \xi n \eps^\alpha} \lesssim \tm \eps^{-\alpha}.$$
Therefore,
$$\sL^{(m)}(x,\bft)\le \sL^{(m-m_0)}(f_\bft^{m_0}(x),\sigma^{m_0}\bft)+ K_1^\alpha \eps^{\kappa \alpha} \sL^{(m_0)}(x,t)\lesssim \tm \eps^{-(1-\kappa)\alpha}.$$
\end{proof}

\subsection{Repeated passages away from neutral points}
For $x\in \T$ and $\bft\in \Ome$, let as before
$$F_0(x,\bft)=x\mbox{ and }r_1(x,\bft)=\inf\{s\ge E(x,\bft): f_\bft^s(x)\in I\},$$
and for each $k\ge 2$, define inductively
$$F_{k-1}(x,\bft)=f_\bft^{r_{k-1}(x,\bft)}(x),\mbox{ and }$$
$$r_k(x,\bft)=\inf\{s\ge r_{k-1}(x,\bft) + E(F_{k-1}(x,\bft)): f_\bft^s(x)\in I\}.$$
The number $r_k(x,\bft)$ is called the {\em $k$-th essential return time} of $(x,\bft)$.

For each $\bft\in\Ome$, let
$$X_1^\bft(N)=\{x\in \T: \widetilde{r_1(x,\bft)}=N\}$$
where $\widetilde{r_1(x,\bft)}$ is defined as in (\ref{eq:tilde}).
\begin{lemma}\label{lem:X1t}
For each $L>0$, there exists $N_0=N_0(L)$ such that for each $N\ge N_0$ and $\bft\not\in \BAD_\eps(N)$, we have
\begin{equation}
\Leb \left(X_1^\bft(N)\right)\le \eps^L e^{-\rho N}.
\label{eq:x1t}
\end{equation}
\end{lemma}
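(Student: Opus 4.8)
\noindent\emph{Plan of proof.} Write $\varphi=\log(1/\eps)$ and $s_0=\lceil (N-1)\varphi\eps^{-\alpha}\rceil$. Since $\widetilde{r_1(x,\bft)}=N$ forces $r_1(x,\bft)\ge (N-1)\varphi\eps^{-\alpha}$, we have $X_1^\bft(N)\subseteq\{x:r_1(x,\bft)\ge s_0\}$, and for $N\ge 2$ and $\eps$ small one checks that $\lceil s_0/2\rceil\ge N\eps^{-\alpha}$, the regime in which Lemma~\ref{lem:expnotbad} is available (with $i=0$). The idea is to split $\{r_1\ge s_0\}$ according to whether the orbit spends its first long stretch near the neutral points or away from them: writing $r_1(x,\bft)=E+(r_1(x,\bft)-E)$ with $E=E(x,\bft)$, every such $x$ lies either in $A:=\{x:E(x,\bft)\ge\lceil s_0/2\rceil\}$ (the orbit stays inside $I$ for at least $\lceil s_0/2\rceil$ steps) or in $B:=\{x:E(x,\bft)<\lceil s_0/2\rceil\}$, in which case $r_1(x,\bft)-E(x,\bft)>s_0-\lceil s_0/2\rceil=\lfloor s_0/2\rfloor$, i.e.\ the orbit makes one excursion outside $I$ of length $>\lfloor s_0/2\rfloor$. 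I will bound $\Leb(A)$ and $\Leb(B)$ separately.

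\smallskip\noindent\emph{Trapped orbits.} Using properties (ii) and (iv) of $I$ from Subsection~\ref{subsec:escape} (which persist for $f_t$ with $|t|$ small), for any $n$ the set $\{x:f_\bft^j(x)\in I\text{ for }0\le j<n\}$ meets at most $q_0:=\#\PP_0$ intervals, each contained in a single component of $I$, and $f_\bft^n$ is injective on each of them; moreover $|Df_\bft^n|\ge 1$ there. Taking $n=\lceil s_0/2\rceil\ge N\eps^{-\alpha}$ and applying Lemma~\ref{lem:expnotbad} (legitimate since $\bft\notin\BAD_\eps(N)$, $i=0$), we get $|Df_\bft^n|\ge \exp(\xi n\eps^\alpha)\ge \exp\!\big(\tfrac12\xi(N-1)\varphi\big)=\eps^{-\xi(N-1)/2}$ on this set. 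Comparing the Lebesgue measures of domain and image ($\le\Leb(\T)=1$) on each of the $\le q_0$ injectivity intervals yields
\[
\Leb(A)\le q_0\,\eps^{\,\xi(N-1)/2}.
\]

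\smallskip\noindent\emph{Long excursions outside $I$.} If $x\in B$ with $E(x,\bft)=e<\lceil s_0/2\rceil$, then $f_\bft^j(x)\in I$ for $0\le j<e$, while $f_{\sigma^e\bft}^j(f_\bft^e(x))\notin I$ for $0\le j<\lfloor s_0/2\rfloor$; hence $B$ is contained in the union over $0\le e<\lceil s_0/2\rceil$ of the sets $\{x:f_\bft^j(x)\in I,\ 0\le j<e\}\cap(f_\bft^e)^{-1}(Y^{(e)})$, where $Y^{(e)}:=\{y:f_{\sigma^e\bft}^j(y)\notin I\text{ for }0\le j<\lfloor s_0/2\rfloor\}$. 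Here I use the exponential decay of the Lebesgue measure of the set of points whose orbit avoids the open set $I$: there are $C_*>0$ and $\rho_*\in(0,1)$, independent of $\bft$ and of small $\eps$, with $\Leb(Y^{(e)})\le C_*\rho_*^{\lfloor s_0/2\rfloor}$. (On $\T\setminus I$ the perturbed maps $f_t$ are uniformly expanding, $|Df_t|\ge\lambda_*>1$, with uniformly bounded distortion, so the survivor set is a hyperbolic repeller of positive escape rate, stable under $C^1$-small perturbations; this will be the one step requiring a genuine argument.) As before, $\{x:f_\bft^j(x)\in I,\ 0\le j<e\}$ meets $\le q_0$ intervals on each of which $f_\bft^e$ is injective with $|Df_\bft^e|\ge1$, so pulling back $Y^{(e)}$ multiplies its measure by at most $q_0$; summing over the $<\lceil s_0/2\rceil\le s_0$ values of $e$ gives $\Leb(B)\le q_0\,s_0\,C_*\,\rho_*^{\lfloor s_0/2\rfloor}\le \exp\!\big(-c_1(N-1)\varphi\eps^{-\alpha}\big)$ for a suitable $c_1>0$ and $\eps$ small (the polynomial factor $q_0s_0$ being absorbed, and $s_0\ge (N-1)\varphi\eps^{-\alpha}$).

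\smallskip\noindent\emph{Conclusion.} Because the bound on $\Leb(B)$ carries the extra factor $\eps^{-\alpha}$ in the exponent, it is dominated by $\Leb(A)$ once $\eps$ is small, so $\Leb(X_1^\bft(N))\le 2q_0\,\eps^{\,\xi(N-1)/2}$. Writing $\eps^{\,\xi(N-1)/2}=\eps^{\,\xi(N-1)/2-L}\cdot\eps^L$ and fixing $N_0=N_0(L)$ large enough that $\xi(N-1)/2-L\ge \xi N/4$ for all $N\ge N_0$, it suffices to take $\eps$ so small that $\tfrac14\xi\log(1/\eps)\ge\rho+\log(2q_0)$; then for such $\eps$ and all $N\ge N_0$ one has $2q_0\,\eps^{\,\xi(N-1)/2-L}\le 2q_0\,\eps^{\,\xi N/4}\le e^{-\rho N}$, i.e.\ $\Leb(X_1^\bft(N))\le \eps^L e^{-\rho N}$, as required. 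The main obstacle is the escape-rate estimate invoked in the excursion class; everything else is bookkeeping with Lemma~\ref{lem:expnotbad} and the elementary geometry of $I$.
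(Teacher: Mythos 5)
Your proof is correct and follows essentially the same strategy as the paper's: both split $r_1 = E + (\text{excursion time})$, control the large-$E$ part via Lemma~\ref{lem:expnotbad} (using $\bft\notin\BAD_\eps(N)$) and the long-excursion part via the exponential escape rate from $\T\setminus I$, and then observe that the excursion contribution carries an extra $\eps^{-\alpha}$ in the exponent and is therefore dominated. The only cosmetic difference is that the paper sums over exact pairs $(m,n)=(E,\,r_1-E)$ and then separates $n\ge m$ from $n<m$, whereas you go straight to the coarse dichotomy $E\gtrless s_0/2$; the ingredient you flag as needing a genuine argument (uniform exponential escape from $\T\setminus I$) is likewise invoked without proof in the paper, where it appears as the unproved estimate $\Leb(V_n^\bfs)\le e^{-\rho_1 n}$.
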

\begin{proof} For each $m\ge 1$, let $I_m^\bft=\{x\in I: E(x,\bft)=m\}$.  For each $n\ge 1$ and $\bfs\in \Ome$, let $V_n^\bfs=\{x\not\in I: r_1(x,\bfs)=n\}.$
Since $f$ is uniformly expanding outside $I$, there exists $\rho_1>0$ such that $\Leb(V_n^\bfs)\le e^{-\rho_1 n}$ holds for all $\bfs\in\Ome$ and $n\ge 1$.
Let $X_{m,n}^\bft=\{x\in I_m^\bft: F^m(x,\bft)\in V_n^{\sigma^m\bft}\}$. Since $f_\bft^m|I_m^\bft$
is injective with derivative not less than one, we have
\begin{equation}\Leb (X_{m,n}^\bft) \le \Leb (V_n^{\sigma^m\bft})\le e^{-\rho_1 n}.
\label{eq:Xmn1}\end{equation}
Next  assume $\bft\not\in \BAD_\eps(N)$. By Lemma~\ref{lem:expnotbad}, when $m\ge N\eps^{-\alpha}$,
$|Df_\bft^m(x)|\ge \exp (\xi m \eps^{\alpha})$ holds for every $x\in \T$, thus
\begin{equation}
\Leb (I_m^\bft)\le \exp (-\xi \eps^\alpha m).\label{eq:Xmn2}\end{equation}
Since
$$X_1^\bft(N)=\bigcup_{(N-1)\eps^{-\alpha}\varphi(\eps) \le m+n< N \eps^{-\alpha} \varphi(\eps)} X_{m,n}^\bft,$$
the lemma follows.
Indeed, we can decompose this union into terms where  $n\ge m$
and those where $n<m$. 
For the first terms, we have $n> (N-1)\eps^{-\alpha}\varphi(\eps)/2$, so by (\ref{eq:Xmn1}) they give a contribution of at most $N\eps^{-\alpha}\varphi(\eps) \exp(-\rho_1 (N-1) \eps^{-\alpha} \varphi(\eps)/2)$,
while for the 2nd terms, $m\ge (N-1)\eps^{-\alpha}\varphi(\eps)/2>N\eps^{-\alpha}$, so they give a contribution of at most $\exp(-C_2 N \varphi(\eps))$ by (\ref{eq:Xmn2}).
Both these terms are bounded from above by $\epsilon^L \exp(-\rho N)$ when $\epsilon>0$ is small
and $N$ is large.
\end{proof}

\begin{lemma}\label{lem:r1large}
For any $x\in \T$ and $N\ge N_0$, we have
$$\theta_\eps^\N\left(\left\{\bft\in\Ome: r_1(x,\bft) > N \eps^{-\alpha} \varphi(\eps)\right\}\right)\le \eps e^{-\rho_2 N}.$$
\end{lemma}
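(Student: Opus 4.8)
The plan is to spend one step of the noise to smooth the (possibly atomic, possibly neutral) starting point $x$ into an absolutely continuous distribution of density at most $1/(2\eps)$, and then to convert the desired bound on a set of noise sequences into the Lebesgue estimate of Lemma~\ref{lem:X1t} by means of Fubini's theorem. The factor $1/(2\eps)$ introduced by this smoothing is harmless because the bound in Lemma~\ref{lem:X1t} carries a free power $\eps^{L}$ with $L$ at our disposal.

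First I would record the elementary comparison
$$r_1(x,\bft)\ \le\ 1+r_1\bigl(f_{t_0}(x),\sigma\bft\bigr),\qquad x\in\T,\ \bft\in\Ome .$$
If $x\in I$, then $E(x,\bft)\ge 1$, $E(f_{t_0}(x),\sigma\bft)=E(x,\bft)-1$, and unwinding the definition of $r_1$ gives equality here; if $x\notin I$, then $E(x,\bft)=0$ and $r_1(x,\bft)=1+\inf\{s\ge 0: f_\bft^{s+1}(x)\in I\}\le 1+r_1(f_{t_0}(x),\sigma\bft)$, since the infimum defining $r_1(f_{t_0}(x),\sigma\bft)$ is taken over the smaller index set $\{s\ge E(f_{t_0}(x),\sigma\bft)\}$. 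Writing $T=N\eps^{-\alpha}\varphi(\eps)$, this comparison shows $\{r_1(x,\bft)>T\}\subseteq\{r_1(f_{t_0}(x),\sigma\bft)>T-1\}$, hence
$$\theta_\eps^\N\bigl(\{\bft:r_1(x,\bft)>T\}\bigr)\ \le\ \theta_\eps^\N\bigl(\{\bft:r_1(f_{t_0}(x),\sigma\bft)>T-1\}\bigr).$$

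Next I would disintegrate $\theta_\eps^\N=\theta_\eps(dt_0)\otimes\theta_\eps^\N(d\bfs)$, with $\bfs=\sigma\bft$, and for each fixed $\bfs$ change variables $y=f_{t_0}(x)=f(x)+t_0$. The pushforward of $\theta_\eps$ under $t_0\mapsto f(x)+t_0$ is $\tfrac{1}{2\eps}\Leb$ restricted to $B_\eps(f(x))$, so the inner integral is at most $\tfrac{1}{2\eps}\Leb\bigl(\{y\in\T:r_1(y,\bfs)>T-1\}\bigr)$, and Tonelli's theorem gives
$$\theta_\eps^\N\bigl(\{\bft:r_1(f_{t_0}(x),\sigma\bft)>T-1\}\bigr)\ \le\ \frac{1}{2\eps}\int_{\Ome}\Leb\bigl(\{y\in\T:r_1(y,\bfs)>T-1\}\bigr)\,d\theta_\eps^\N(\bfs).$$
Since $\eps^\alpha/\varphi(\eps)<1$ for $\eps$ small, any $y$ with $T-1<r_1(y,\bfs)<\infty$ satisfies $\widetilde{r_1(y,\bfs)}\ge N$ (with $\widetilde m$ as in (\ref{eq:tilde})), while $\{y:r_1(y,\bfs)=\infty\}$ is Lebesgue-null; hence $\Leb\bigl(\{y:r_1(y,\bfs)>T-1\}\bigr)\le\sum_{N'\ge N}\Leb\bigl(X_1^\bfs(N')\bigr)$. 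For $N'\ge N\ge N_0$ and $\bfs\notin\BAD_\eps(N')$, Lemma~\ref{lem:X1t} bounds $\Leb(X_1^\bfs(N'))$ by $\eps^{L}e^{-\rho N'}$; on $\BAD_\eps(N')$ we use $\Leb(X_1^\bfs(N'))\le 1$ together with $\theta_\eps^\N(\BAD_\eps(N'))\le e^{-\rho N'\eps^{-\alpha}}$ from Lemma~\ref{lem:badsmall}. Since $e^{-\rho N'\eps^{-\alpha}}\le\eps^{L}e^{-\rho N'}$ once $\eps$ is small enough (uniformly in $N'\ge 1$), summing the geometric series over $N'\ge N$ yields
$$\int_{\Ome}\Leb\bigl(\{y:r_1(y,\bfs)>T-1\}\bigr)\,d\theta_\eps^\N(\bfs)\ \le\ C\,\eps^{L}e^{-\rho N}.$$
Consequently $\theta_\eps^\N\bigl(\{\bft:r_1(x,\bft)>T\}\bigr)\le\tfrac{C}{2}\,\eps^{L-1}e^{-\rho N}$; taking $L=2$ in Lemma~\ref{lem:X1t}, setting $\rho_2:=\rho/2$, and enlarging $N_0$ so that $\tfrac{C}{2}e^{-\rho N/2}\le 1$ for all $N\ge N_0$ gives the asserted bound $\eps e^{-\rho_2 N}$.

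The step I expect to require the most care is the index bookkeeping around the peeling inequality: one must check that the single shift incurred in passing from $r_1(x,\bft)>T$ to $r_1(f_{t_0}(x),\sigma\bft)>T-1$ still feeds Lemma~\ref{lem:X1t} at threshold $N$ (and not $N\pm1$), and that the smallness threshold for $\eps$ and the constants supplied by Lemmas~\ref{lem:badsmall} and~\ref{lem:X1t}, together with the geometric sum over $N'$, can all be chosen uniform in $N'$ and $N\ge N_0$. The remaining ingredients — disintegration of the product measure, the measure-preserving change of variables $t_0\mapsto f(x)+t_0$, and Tonelli's theorem — are entirely routine.
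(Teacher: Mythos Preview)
Your proof is correct and follows essentially the same route as the paper: peel off one noise coordinate, push $\theta_\eps$ forward to Lebesgue via $t_0\mapsto f(x)+t_0$, and then feed the resulting set $\{y:r_1(y,\bfs)>T-1\}$ into Lemma~\ref{lem:X1t} using Fubini, with Lemma~\ref{lem:badsmall} disposing of the $\BAD$ set. The only difference is organizational: the paper excludes $\sigma^{-1}\BAD_\eps(N)$ once at the outset and (somewhat tersely) writes $f_t(x)\in X_1^{\bfs}(N)$, whereas you more carefully sum over $\{N'\ge N\}$ and split off $\BAD_\eps(N')$ term by term --- a cleaner bookkeeping of the same idea.
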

\begin{proof}
Let $\Ome(x,N)= \left\{\bft\in\Ome: r_1(x,\bft) > N \eps^{-\alpha} \varphi(\eps)\right\}$.
Because of Lemma~\ref{lem:badsmall} it suffices to prove that
$\Ome(x,N)\setminus \sigma^{-1} \BAD_\eps(N)$ has exponentially small measure in $\theta_\eps^\N$.
For each $\bfs\in \Ome\setminus \BAD_\eps(N)$, consider the section
$$\Ome(x,\bfs,N):=\left\{t\in [-\eps,\eps] : t\bfs \in \Ome(x,N)\right\}.$$
For each $t\in \Ome(x,\bfs,N)$ we have $f_t(x)\in X_1^\bfs(N)$.  Therefore and by Lemma~\ref{lem:X1t}, we obtain
 $\theta_\eps^\N(\Ome(x,\bfs,N))\le (2/\eps) \Leb(X_1^\bfs(N))\le 2\eps e^{-\rho N}$, provided that $N>N_0(2)$. By Fubini's theorem, the lemma follows.
\end{proof}

\subsection{Special return times}

\begin{definition}
Given $(x,\bft)\in I\times \Ome$, the {\em special return time} $R(x,\bft)$ is the minimal essential return time $r$ of $(x,\bft)$ for which $$\bft\not\in Bad_\eps(\tr).$$
We write $$R_0(x,\bft)=R(x,\bft)\mbox{ , } G_0(x,\bft)=x$$ and for $k\ge 1$ we define inductively
$$G_k(x,\bft)=f^{R_{k-1}(x,\bft)}_\bft(x)\mbox{ and }R_k(x,\bft)=R(G_{k}(x,\bft),\bft).$$
\end{definition}

\begin{lemma} \label{lem:R-estimate}
There exist constants $R_*>0$, $C>0$ and $\rho>0$ such that
for each $x\in \T$, we have
\begin{equation}\label{eq:estimate-sumR}
\theta_\eps^\N\left( \left\{ \bft ; \sum_{i=0}^{n-1} R_i(x,\bft)> R_* n \eps^{-\alpha}\varphi(\epsilon)\right\} \right) \le C e^{-\rho n},
\end{equation}
provided that $\eps>0$ is small enough.
\end{lemma}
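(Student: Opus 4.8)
The plan is to split the cumulative special‑return time into a sum of ordinary first‑return times plus a correction coming from the essential returns a segment makes before it turns special, and to bound both pieces by Chernoff estimates exploiting the Markov property of the noise at essential‑return times. Write $M_0:=\eps^{-\alpha}\varphi(\eps)$, let $\cF$ be the filtration generated by the noise coordinates (so that every essential‑return time of the combined orbit is a stopping time), and for $0\le i<n$ let $G_i$ be the point at which the $i$‑th segment begins, $r_1^{(i)}$ the first essential‑return time of $G_i$ in the fresh noise, and $k_i$ the number of ordinary essential returns of that segment, so that $R_i$ is the $k_i$‑th of them and $R_i-r_1^{(i)}=r_{k_i}^{(i)}-r_1^{(i)}=\sum_{j=1}^{k_i-1}\bigl(r_{j+1}^{(i)}-r_j^{(i)}\bigr)$.

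The basic input is that $r_1(z,\bft)$ depends only on $z$ and on the noise following $z$. Hence, by Lemma~\ref{lem:r1large} applied with $z=G_i$ and any $N\ge N_0$, together with the independence of the fresh noise from the past, the conditional law of $r_1^{(i)}$ satisfies $\theta_\eps^\N\bigl(r_1^{(i)}>NM_0\mid\cF\bigr)\le\eps e^{-\rho_2 N}$ for $N\ge N_0$, and the same bound holds at every essential‑return stopping time for the gap to the next essential return. Therefore $r_1^{(i)}/M_0$, and every essential‑return gap divided by $M_0$, has a conditional exponential moment $\le C(\lambda)<\infty$ for $\lambda<\rho_2$, uniformly in the base point. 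By the tower property $\mathbb E\bigl[\exp\bigl(\lambda M_0^{-1}\sum_{i<n}r_1^{(i)}\bigr)\bigr]\le C(\lambda)^n$, so $\sum_{i<n}r_1^{(i)}\le\tfrac13 R_* nM_0$ outside an event of probability $\le e^{-n}$ once $R_*$ is large relative to $1/\lambda$ and $\log C(\lambda)$.

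It remains to control $E_n:=\sum_{i<n}\bigl(R_i-r_1^{(i)}\bigr)$. A segment fails to be special at an essential return of scale $v:=\widetilde r$ only if $\sigma^{(\text{start of segment})}\bft\in\BAD_\eps(v)$; set $\zeta_i:=\#\{v\ge1:\sigma^{(\text{start of seg }i)}\bft\in\BAD_\eps(v)\}$ and $W_n:=\sum_{i<n}\zeta_i$. Decompose the extra gaps of each segment into within‑scale and between‑scale ones. The essential returns of a segment at a fixed scale $v$ lie in a window of length $M_0$, so their successive gaps telescope to at most $M_0$; and a scale can hold more than one essential return only when it is a bad scale, so the within‑scale part of $E_n$ is at most $M_0W_n$. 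The between‑scale extra gaps are single essential‑return gaps, at most $W_n$ in number, each carrying the conditional exponential moment above, so on the event $\{W_n\le\delta n\}$ their sum is at most $\tfrac13 R_* nM_0$ outside a further event of probability $\le Ce^{-\rho n}$ by a Chernoff estimate. Finally, by Lemma~\ref{lem:badsmall}, since $\ell$ distinct bad scales force one of them to be $\ge\ell$,
\[\theta_\eps^\N\bigl(\{\zeta_i\ge\ell\}\mid\cF\bigr)\le\sum_{v\ge\ell}\theta_\eps^\N\bigl(\BAD_\eps(v)\bigr)\le\sum_{v\ge\ell}e^{-\rho v\eps^{-\alpha}}\le 2e^{-\rho\ell\eps^{-\alpha}},\]
so $\mathbb E\bigl[e^{\zeta_i}\mid\cF\bigr]\le1+Ce^{-\rho\eps^{-\alpha}}$ and $\mathbb E\bigl[e^{W_n}\bigr]\le(1+Ce^{-\rho\eps^{-\alpha}})^n$ for $\eps$ small; this forces $W_n\le\delta n$ (for any fixed $\delta>0$) outside an event of probability $\le e^{-n}$, and then the within‑scale part of $E_n$ is $\le\delta nM_0\le\tfrac13 R_* nM_0$. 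Choosing $R_*$ large, $\delta$ small, $\eps$ small, and combining the three events by a union bound yields $\sum_{i<n}R_i\le R_* nM_0$ off an event of probability $\le Ce^{-\rho n}$ for $x\in I$; for general $x\in\T$ one first waits the geometrically short time for the orbit to enter $I$.

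The step I expect to be the main obstacle is this last one: arranging that $E_n$ is only $O(W_nM_0)$ plus a sum of $O(W_n)$ genuine essential‑return gaps rather than $O(W_nM_0^2)$, which rests on the telescoping‑within‑a‑scale observation and on the fact that every scale‑jumping extra gap is again an essential‑return gap governed by Lemma~\ref{lem:r1large}. Some care is also needed with the measure‑theoretic conditioning for the Chernoff step on the between‑scale gaps, since the ``last bad return'' of a segment is not a stopping time — which is precisely why one works with the scale count $W_n$ rather than directly with the $k_i$.
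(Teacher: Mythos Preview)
Your decomposition $R_i = r_1^{(i)} + (R_i - r_1^{(i)})$ and the within/between-scale split is substantially more intricate than what the paper does, and the between-scale Chernoff step contains a real gap that you flag but do not close. The individual between-scale gap is indeed an essential-return gap $r_{j+1}-r_j$ from a genuine stopping time $r_j$, so Lemma~\ref{lem:r1large} gives it a conditional exponential tail; the problem is that you are summing over a \emph{randomly selected} subset of the essential-return gaps, and the selection rule ``this gap crossed a scale boundary'' is positively correlated with the gap being large (a gap exceeding $M_0$ is automatically between-scale). So one cannot simply multiply $\le W_n$ conditional moment bounds together via the tower property: conditioning on $\{W_n\le\delta n\}$ does not make the selected gaps behave like $\delta n$ fresh draws. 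Your remark that one works with $W_n$ ``rather than directly with the $k_i$'' does not address this, and the telescoping observation only controls the within-scale part; the telescoped between-scale quantity is essentially $\tilde R_i$ itself, which is circular.

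The paper avoids all of this by bounding each $R_i$ directly, without any decomposition. Let $R'=R'(x,\bft)$ be the essential return immediately preceding $R$ (with $R'=0$ if $R=r_1$); since $R'$ is non-special one has $\bft\in\BAD_\eps(\widetilde{R'})$ whenever $R'\ge1$. If $R>nM_0$, then either $R'>nM_0/2$, whose probability is at most $\sum_{m>nM_0/2}\theta_\eps^\N(\BAD_\eps(\tilde m))\lesssim e^{-\rho n}$ by Lemma~\ref{lem:badsmall}; or the single gap $R-R'>nM_0/2$, and summing over the at most $nM_0/2$ possible values of $R'$ and applying Lemma~\ref{lem:r1large} via Fubini in the noise after time $R'$ gives a bound $\lesssim nM_0\cdot\eps\,e^{-\rho_2 n/2}$, again exponentially small. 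This yields the uniform tail $\theta_\eps^\N(R(x,\bft)>nM_0)\le 2e^{-\rho n}$, after which a standard large-deviation argument for $\sum_{i<n}R_i$ (peeling off one $R_i$ at a time) finishes the lemma. The two-case split on $R'$ does in one stroke what your within/between-scale machinery attempts, with only one gap and one $\BAD$ event to control.
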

\begin{proof}
We first prove that there exist $\rho>0$ and $n_0>0$ such that for each $n\ge n_0$, and any $x\in \T$,
we have
\begin{equation}\label{eq:estimate-R}
\theta_\eps^\N \left(\left\{\bft\in \Ome: R(x,\bft)>n\eps^{-\alpha}\varphi(\eps) \right\}\right)\le 2 e^{-\rho n}.
\end{equation}
Let $R'(x,\bft)=0$ if $R(x,\bft)=r_1(x,\bft)$, and let $R'(x,\bft)$ be the previous essential return time of $(x,\bft)$ into $I$ otherwise.
For each $m=0,1,\ldots$, let $$E(m)=\{\bft: R'(x,\bft)=m\},\, E^*(m)=\{\bft\in E(m): r_1(f_\bft^m(x), \sigma^m \bft)> n\eps^{-\alpha} \varphi(\eps)/2\}.$$ and let
$$E_1=\bigcup_{m> n\eps^{-\alpha}\varphi(\eps)/2} E(m), E_2=\bigcup_{m\le n\eps^{-\alpha}\varphi(\eps)/2} E^*(m).$$
Then
$$\{\bft: R(x,\bft)> n\eps^{-\alpha}\varphi(\eps)\}\subset E_1\cup E_2.$$
If $\bft\in E(m)$, $m\ge 1$ then  $\bft\in \BAD_\eps(\tm)$,
since $R'(x,\bft)=m$ is not a special return time.
Hence, by Lemma~\ref{lem:badsmall},
$$\theta_\eps^\N(E(m))\le e^{-\rho \tm \eps^{-\alpha}}.$$
Therefore
\begin{equation}\label{eq:thetaepsE1}
\theta_\eps^\N(E_1)\le
\sum_{m> n\eps^{-\alpha}\varphi(\eps)/2} e^{-\rho \tm \eps^{-\alpha}}\le
\eps^{-\alpha}\varphi(\eps)
\sum_{N> n/2 } e^{-\rho N \eps^{-\alpha}}<  e^{-\rho n}/2
\end{equation}
where the $\eps^{-\alpha}\varphi(\eps)$ factor appears because for given N, the number of positive integers m with $\tm=N$
is bounded by this number.
To estimate the size of $E_2$ we recall that by Lemma~\ref{lem:r1large}, for any $y\in\T$,
$\theta_\eps^\N(\{\bfs: \widetilde{r_1(y,\bfs)}>\frac{n}{2}\})\le \eps e^{-2\rho n},$ provided that $n$ is large enough. So by Fubini's theorem,
$$\theta_\eps^\N(E^*(m))\le \eps e^{-2\rho n}$$ holds for all $m$. Thus
$$\theta_\eps^\N(E_2)\le n\eps^{-\alpha}\varphi(\eps) \eps e^{-2\rho n}< e^{-\rho n}/2.$$
The inequality (\ref{eq:estimate-R}) follows.

Now let us complete the proof of the proposition by an easy large deviation argument.
To this end, for positive integers $m_0,m_1, \ldots, m_{n-1}$, let us consider
$$E(m_0, m_1,\ldots, m_{n-1})=\{\bft: R_i(x,\bft)=m_i, 0\le i<n\}.$$
Since the conditions $R_0(x,\bft)=m_0, R_1(x,\bft)=m_1, \ldots, R_{n-2}(x,\bft)=m_{n-2}$ depends only on the first $m_0+\cdots +m_{n-2}$ coordinates of $\bft$, and (\ref{eq:estimate-R}) holds for all $x$, it follows that
$$\theta_\eps^\N(E(m_0, m_1,\ldots, m_{n-1}))\le e^{-\rho m_{n-1}}  \theta_\eps^\N (E(m_0,\ldots, m_{n-2})),$$
provided that $m_{n-1}\ge n_0$. Without the last condition on $m_{n-1}$ we have
$$\theta_\eps^\N(E(m_0, m_1,\ldots, m_{n-1}))\le e^{-\rho m_{n-1}} K  \theta_\eps^\N (E(m_0,\ldots, m_{n-2})),$$
where $K= e^{\rho n_0}$. By an easy induction, it follows that
$$\theta_\eps^\N(E(m_0, m_1,\ldots, m_{n-1}))\le e^{-\rho(m_0+m_1+\cdots+ m_{n-1})} K^n .$$
Therefore the left hand side of (\ref{eq:estimate-sumR}) is bounded from above by
\begin{align*}
\sum_{M>R_* n} \sum_{m_0+m_1+\cdots+m_{n-1}=M} e^{-\rho(m_0+m_1+\cdots+ m_{n-1})} K^n =\sum_{M> R_* n} \binom{M-1}{n-1} e^{-\rho M} K^n.
\end{align*}
Provided that we choose $R_*$ large enough, this is exponentially small in $n$. Thus (\ref{eq:estimate-sumR}) follows by redefining $\rho$.
\end{proof}
\begin{prop}\label{prop:gamma-est}
Fix $\gamma> 1-\kappa$. There exist $C>0$ and $\rho>0$ such that
$$\mathbb{P}_\eps(\{m_{\eps^{-\gamma\alpha}}(x,\bft)\ge n\eps^{-\alpha}\varphi(\eps)\})\le Ce^{-\rho n}.$$
\end{prop}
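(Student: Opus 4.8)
The plan is to find a $\eps^{-\gamma\alpha}$-scale expansion time among the special return times of $(x,\bft)$. Write $R_0,R_1,\dots$ for the successive special return increments, $M_k:=\sum_{i<k}R_i$ for the time of the $k$-th special return, $G_k:=f_\bft^{M_k}(x)$, and $\widetilde R_j:=[R_j\eps^{\alpha}/\varphi(\eps)]+1$. I want to show that, off a set of $\mathbb{P}_\eps$-measure exponentially small in $n$, some $M_k<n\eps^{-\alpha}\varphi(\eps)$ already satisfies $\sL^{(M_k)}(x,\bft)\le\eps^{-\gamma\alpha}$; since $M_k\ge R_0\ge E(x,\bft)$, this forces $m_{\eps^{-\gamma\alpha}}(x,\bft)\le M_k<n\eps^{-\alpha}\varphi(\eps)$.

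First I would set up a distortion recursion along special returns. Each increment $R_j$ is in particular an escaping time, so $\Lambda^{(R_j)}(G_j,\sigma^{M_j}\bft)\ge\lambda_*$ by Lemma~\ref{lem:constantstau*lambda*} (and, if $G_j\notin I$, by the uniform expansion outside $I$); and since $R_j$ is a \emph{special} return, $\sigma^{M_j}\bft\notin\BAD_\eps(\widetilde R_j)$, so Proposition~\ref{prop:expansionnotbad} gives $\sL^{(R_j)}(G_j,\sigma^{M_j}\bft)\le K_2\widetilde R_j\,\eps^{-(1-\kappa)\alpha}$. Feeding these into Lemma~\ref{lem:slrec} and inducting on $k$ yields
$$\sL^{(M_k)}(x,\bft)\ \le\ K_2\,\eps^{-(1-\kappa)\alpha}\sum_{j=0}^{k-1}\lambda_*^{-(k-1-j)\alpha}\,\widetilde R_j .$$

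Next I would invoke Lemma~\ref{lem:R-estimate}, whose estimate (\ref{eq:estimate-sumR}) holds for every starting point and so integrates against $\Leb$. Take $k=k(n):=\lceil n/(2R_*)\rceil$. On the event $\{\sum_{i<k(n)}R_i\le R_*k(n)\eps^{-\alpha}\varphi(\eps)\}$ one has $M_{k(n)}<n\eps^{-\alpha}\varphi(\eps)$, and, using $\widetilde R_j\le R_j\eps^{\alpha}/\varphi(\eps)+1$ together with $\sum_{i\ge0}\lambda_*^{-i\alpha}<\infty$, the sum in the displayed bound is $\lesssim k(n)\lesssim n$. Since $\gamma>1-\kappa$ we have $\eps^{-\gamma\alpha}\,\eps^{(1-\kappa)\alpha}=\eps^{-(\gamma+\kappa-1)\alpha}\to\infty$, so for $n\le c\,\eps^{-(\gamma+\kappa-1)\alpha}$ (with $c$ a suitable constant) this gives $\sL^{(M_{k(n)})}(x,\bft)\le\eps^{-\gamma\alpha}$, hence $m_{\eps^{-\gamma\alpha}}(x,\bft)<n\eps^{-\alpha}\varphi(\eps)$. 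Consequently the event of the Proposition is contained in $\{\sum_{i<k(n)}R_i>R_*k(n)\eps^{-\alpha}\varphi(\eps)\}$, of $\mathbb{P}_\eps$-measure $\le Ce^{-\rho k(n)}\le Ce^{-\rho'n}$; for bounded $n$ the bound is trivial after enlarging $C$.

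The main obstacle is the tail $n\gtrsim\eps^{-(\gamma+\kappa-1)\alpha}$, where the crude estimate $\sum_j\lambda_*^{-(k-1-j)\alpha}\widetilde R_j\lesssim k$ no longer stays below $\eps^{-\gamma\alpha}$. There one must use the geometric damping seriously: an atypically long increment $R_j$ enters the sum only with weight $\lambda_*^{-(k-1-j)\alpha}$ and so becomes harmless once it lies $O(\log n)$ special returns in the past (equivalently, by Lemma~\ref{lem:slrec} the whole past contribution $\sL^{(M_k)}(x,\bft)$ gets divided by a factor $\ge\lambda_*^{\alpha}$ at each later escape), so it is enough that the last few increments have typical size $O(\eps^{-\alpha}\varphi(\eps))$; by the Markovian dependence of successive special returns and (\ref{eq:estimate-R}) this fails only on a set of summably small measure. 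Making the resulting bound uniformly $Ce^{-\rho n}$ — in particular for $n$ very large relative to $\eps^{-1}$ — is the delicate point. (For the short range $1\le n\lesssim\varphi(\eps)$ one can bypass special returns: if $E(x,\bft)\le\eps^{-\kappa\alpha}$ then $m_{\eps^{-\gamma\alpha}}(x,\bft)\le m_{K_*}(x,\bft)=E(x,\bft)<n\eps^{-\alpha}\varphi(\eps)$ by Proposition~\ref{prop:escape}(2), so the event lies in $\{E(x,\bft)\ge\eps^{-\kappa\alpha}\}$, of measure $\lesssim\eps^{\kappa}\le e^{-\rho n}$ by Proposition~\ref{prop:tailE}.)
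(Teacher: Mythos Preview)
Your distortion recursion
\[
\sL^{(M_k)}(x,\bft)\le K_2\,\eps^{-(1-\kappa)\alpha}\sum_{j=0}^{k-1}\lambda_*^{-(k-1-j)\alpha}\widetilde R_j
\]
and the probabilistic input (Lemma~\ref{lem:R-estimate}) are exactly the right ingredients. The gap is precisely where you flag it: you fix $k=k(n)$ in advance and then bound the geometric sum crudely by $\lesssim k(n)\lesssim n$, which only gives $\sL^{(M_{k(n)})}\le\eps^{-\gamma\alpha}$ when $n\lesssim\eps^{-(\gamma+\kappa-1)\alpha}$. Your sketch for larger $n$ (control the last $O(\log n)$ increments separately) is plausible but, as you say, not easy to push through to a uniform $Ce^{-\rho n}$.

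The paper closes this gap with a single deterministic observation: the Pliss lemma. Instead of fixing $k$, let $n_0=n_0(x,\bft)$ be the \emph{first} integer with $\sum_{j<n_0}R_j\le R_*n_0\eps^{-\alpha}\varphi(\eps)$; Lemma~\ref{lem:R-estimate} gives $\mathbb{P}_\eps(n_0>n)\le Ce^{-\rho n}$, hence an exponential tail for $M_{n_0}$. Now Pliss (Lemma~\ref{lem:pliss}) applied to $a_j=R_j$ produces some $k\le n_0$ with $R_{k-1}+\dots+R_{k-j}\le jR_*\eps^{-\alpha}\varphi(\eps)$ for every $j$, hence $\widetilde R_{k-1}+\dots+\widetilde R_{k-j}\le j([R_*]+1)$ for every $j$. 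Abel summation (the remark after Lemma~\ref{lem:pliss}) then gives
\[
\sum_{j=0}^{k-1}\lambda_*^{-(k-1-j)\alpha}\widetilde R_j\ \le\ ([R_*]+1)\sum_{j\ge1}j\lambda_*^{-(j-1)\alpha},
\]
an absolute constant independent of $k$, $n$ and $\eps$. Thus $\sL^{(M_k)}(x,\bft)\lesssim\eps^{-(1-\kappa)\alpha}\le\eps^{-\gamma\alpha}$ for small $\eps$, with no restriction on $n$ at all, and $m_{\eps^{-\gamma\alpha}}(x,\bft)\le M_k\le M_{n_0}$ deterministically. The whole large-$n$ difficulty evaporates: the point is not that the last few increments are typical, but that \emph{some} backward-average point must exist once the total average is controlled.
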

To prove this proposition, we shall use the following elementary observation due to Pliss:
\begin{lemma}\label{lem:pliss}
If $a_0+\dots+a_n\le (n+1) C$ then there exists $k\le n$ so that
$$a_k\le C,a_{k-1}+a_k\le 2C,a_{k-2}+a_{k-1}+a_{k}\le 3C,\dots, a_0+\dots+a_k\le (k+1)C.$$
\end{lemma}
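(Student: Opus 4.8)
The plan is to prove this classical Pliss-type lemma by rephrasing it in terms of the partial sums of the sequence and then locating $k$ via a well-chosen minimizer. First I would set $S_0=0$ and $S_j=a_0+a_1+\cdots+a_{j-1}$ for $1\le j\le n+1$, and put $T_j=S_j-jC$ for $0\le j\le n+1$. With this notation the hypothesis $a_0+\cdots+a_n\le (n+1)C$ reads exactly $T_{n+1}\le 0=T_0$. Next, for a fixed $k\in\{0,1,\dots,n\}$ and $j\in\{0,1,\dots,k\}$, writing $\ell=k-j$, the inequality $a_\ell+a_{\ell+1}+\cdots+a_k\le (j+1)C=(k-\ell+1)C$ is equivalent to $S_{k+1}-S_\ell\le (k+1-\ell)C$, that is, to $T_{k+1}\le T_\ell$. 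Hence the entire conclusion of the lemma, for this particular $k$, is equivalent to the single assertion that $T_{k+1}\le T_\ell$ for every $\ell$ with $0\le\ell\le k$.

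So it suffices to produce an index $m\in\{1,2,\dots,n+1\}$ with $T_m\le T_\ell$ for all $0\le\ell<m$, and then take $k=m-1\le n$. I would take $m$ to be the \emph{largest} index in $\{0,1,\dots,n+1\}$ at which $T$ attains its minimum value $\mu=\min_{0\le j\le n+1}T_j$. One first checks $m\ge 1$: indeed $\mu\le T_{n+1}\le 0=T_0$, so if $\mu<0$ then $T_0=0$ is not a minimizer and the largest minimizer is positive, whereas if $\mu=0$ then $T_{n+1}=0=\mu$, so $n+1$ is itself a minimizer and again $m\ge 1$. Then, for every $\ell$ with $0\le\ell<m$ one has $T_m=\mu\le T_\ell$ by definition of $\mu$. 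Unwinding the equivalence from the previous paragraph with $k=m-1$ then yields $a_{k-j}+\cdots+a_k\le(j+1)C$ for all $0\le j\le k$, which is exactly the statement.

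There is essentially no genuine obstacle here; the argument is pure bookkeeping. The only point that needs a moment's care is arranging that $m\ge 1$ (equivalently that $k=m-1$ is a legitimate index $\ge 0$), which is precisely why one selects the largest minimizing index and invokes $T_{n+1}\le T_0$. An alternative, fully equivalent route is a short induction on $n$, or a direct extremal choice on the suffix sums $a_\ell+\cdots+a_k$ themselves, but the minimizer argument above is the cleanest to write out and requires no case analysis beyond the one just described.
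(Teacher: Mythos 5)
Your proof is correct. Note that the paper states this lemma without proof, describing it only as an ``elementary observation due to Pliss,'' so there is nothing in the paper to compare against; the partial-sum reformulation $T_j = S_j - jC$ together with choosing the largest minimizer of $T$ over $\{0,\dots,n+1\}$ is the standard and cleanest way to establish it, and your verification that $m\ge 1$ (so that $k=m-1$ is a valid index) is exactly the one point that needs care.
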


Note that if the conclusion of the lemma holds then for any $\rho\in (0,1)$ and $a_i\ge 0$, then
$$\rho^k a_0+\rho^{k-1} a_1+\dots +a_k
\le
\rho^k(a_0+\dots +a_k)
+\rho^{k-1}(a_1+\dots+a_k)+
\dots + a_k \le $$
$$\le  \sum_{j=1}^{k+1} jC \rho^{j-1} \le \sum_{j=1}^{\infty} jC\rho^{j-1} \le C K(\rho).$$

\begin{proof}[Proof of Proposition~\ref{prop:gamma-est}]
Let $n(x,\bft)$ be the minimal positive integer such that
$$\hR(x,\bft):=\sum_{i=0}^{n(x,\bft)-1} R_i(x,\bft)\le R_* n(x,\bft) \eps^{-\alpha} \varphi(\eps).$$  By Lemma~\ref{lem:R-estimate}, it suffices to
show that  $m_{\eps^{-\gamma\alpha}}(x,\bft) \le \hR(x,\bft)$.

Using Lemma~\ref{lem:pliss} there exists $k\le n(x,\bft)$ with
$$  R_{k-1}(x,\bft)\le R_*  \eps^{-\alpha}\varphi(\epsilon),R_{k-2}(x,\bft)+ R_{k-1}(x,\bft)\le 2R_*  \eps^{-\alpha}\varphi(\epsilon),$$
$$\dots ,R_0(x,\bft)+ \dots + R_{k-1}(x,\bft) \le (k+1)R_* \eps^{-\alpha}\varphi(\epsilon) .$$
Let us show that these inequalities imply that $m_{\eps^{-\gamma}}(x,\bft)\le R_0(x,\bft)+\dots+R_k(x,\bft)$.
Indeed these inequalities  imply, using the notation introduced in equation (\ref{eq:tilde}),
$$\widetilde{ R_k}\le [R_*]+1, \widetilde{R_k}+ \widetilde{R_{k-1}}\le 2([R_*]+1),\dots, \widetilde{R_k}+\dots+\widetilde{R_0}\le (k+1)([R_*]+1).$$
Let $S_i=R_0+\dots+R_{i-1}$. Then by Proposition~\ref{prop:expansionnotbad},
$$\sL^{(R_{i-1})}(f_{\bft}^{S_{i-1}}(x), \sigma^{S_{i-1}}\bft)\le K_2\tR_{i-1}\eps^{-(1-\kappa) \alpha}.$$
Since $$\Lambda^{(S_k-S_i)}(f_{\sigma^{S_i}\bft}^{S_k-S_i}(x), \sigma^{S_i}\bft)\ge \lambda_*^{k-i},$$
we have
$$\sL^{(S_k)}(x,\bft)\le K_2(\tR_{k-1}+\lambda_*^{-\alpha}\tR_{k-2} +\cdots +\lambda_*^{-(k-1)\alpha} \tR_0)\eps^{-(1-\kappa) \alpha}\le \eps^{-\gamma \alpha},$$
provided that $\eps>0$ is small enough.
\end{proof}

\section{Inducing to large scale}
\label{proof:thm-tail-estimate}

Fix $\gamma>0$ and $\kappa\in (\alpha,1)$ such that $2\gamma(1+\alpha)<\alpha$, $\gamma>1-\kappa$ and such that (\ref{eqn:kappa}) holds.
Let $\widehat m(x, \bft)$ denote the minimal positive integer $\widehat m$ for which the following hold:
\begin{itemize}
\item $\widehat m\ge E(x,\bft)$;
\item $\dist (f_\bft^{\widehat m}(x), \PP_0) <\eps^{2\gamma};$
\item
 if $J$ is the component of $f_\bft^{-\widehat m} (B_{\eps^\gamma}(\PP_0))$ which contains $x$, then
$$\sup_{y\in J}\sum_{j=0}^{\widehat m-1} \left(\frac{1}{Df_\bft^{\widehat m-j}(f_\bft^j(y^\pm))}\right)^\alpha\le \eps^{-\gamma\alpha}.$$
\end{itemize}
If $\widehat m$ does not exist then we set $\widehat m(x,\bft)=\infty$.

\begin{lemma}\label{lem:Mtaum}
There exists $K_\sharp >0$ such that the following holds provided that $\eps>0$ is small enough.
For any $x\in I$ and $\bft\in\Omega_\eps^\N$, we have
$$\min (m_{K_\sharp}(x,\bft), \widehat m (x,\bft) )\le m_{\eps^{-\gamma\alpha}}(x,\bft) + \eps^{-\alpha}.$$
\end{lemma}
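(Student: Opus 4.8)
The plan is to start the random orbit at time $m := m_{\eps^{-\gamma\alpha}}(x,\bft)$, so that $m \ge E(x,\bft)$ and $\sL^{(m)}(x,\bft) \le \eps^{-\gamma\alpha}$, write $y = f_\bft^m(x)$ and $\bfs = \sigma^m\bft$, and follow the orbit of $y$ for up to $\eps^{-\alpha}$ steps. Two ingredients do the work. First, the arithmetic of the standing exponents: from $2\gamma(1+\alpha)<\alpha$, $\kappa(1+\alpha)>1$ and $\kappa>\alpha$ one gets $\gamma<2\gamma<\kappa$ and $2\gamma(1+\alpha)<1$; hence for any $z\in I^\parab$ with $\dist(z,\PP_0)\ge\eps^{2\gamma}$ one has $\dist(f(z),z)\asymp\dist(z,\PP_0)^{1+\alpha}\gg\eps$, so Proposition~\ref{prop:escape}(1) applies and the escape from $z$ expands by $\ge K_*^{-1}\dist(z,\PP_0)^{-1}$; and by Lemma~\ref{lem:bindescape} that escape lasts $\asymp\dist(z,\PP_0)^{-\alpha}\le C\eps^{-2\gamma\alpha}$ steps, which is $\ll\eps^{-\kappa\alpha}$ (so Proposition~\ref{prop:escape}(2) gives $\sL\le K_*$ across it) and also $\ll\eps^{-\alpha}$; note $\eps^{-2\gamma\alpha}$, $\eps^{-\gamma\alpha}$ and $\eps^{-2\gamma\alpha}\varphi(\eps)$ are all $o(\eps^{-\alpha})$. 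Second, Lemma~\ref{lem:slrec} in the form $\sL^{(m+k)}(x,\bft)\le\sL^{(k)}(y,\bfs)+\Lambda^{(k)}(y,\bfs)^{-\alpha}\eps^{-\gamma\alpha}$: once the new block $[m,m+k]$ has expanded by $\ge\eps^{-\gamma}$ the inherited term is $\le 1$.

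Distinguish whether the orbit of $y$ enters $B_{\eps^{2\gamma}}(\PP_0)$ within $\eps^{-\alpha}$ steps. If it does not, then every $Df_{\sigma^k\bft}(f_\bft^{m+k}(x))\ge 1+c\,\eps^{2\gamma\alpha}$, and whenever the orbit enters a component of $I^\parab$ it enters at distance $\ge\eps^{2\gamma}$ and hence (even allowing for the noise, which is dominated by $\dist(f(z),z)$) escapes $I$ within $\le C\eps^{-2\gamma\alpha}$ steps with the expansion and distortion control just recorded, while between consecutive such visits the orbit spends at least two steps in the uniformly expanding part $(\T\setminus I)\cup I^\rep$, where $Df\ge\lambda_1>1$ with $\lambda_1$ independent of $\eps$. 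Organizing the orbit of $y$ into these cycles, $\sL^{(k)}(y,\bfs)$ contracts by a fixed factor $\le\lambda_1^{-2\alpha}<1$ per cycle (the $\sL$ accumulated inside a visit being $\le K_*$, and each visit expanding), and $\Lambda^{(k)}(y,\bfs)$ grows correspondingly; after $O(\varphi(\eps))$ cycles, i.e.\ at some time $k=o(\eps^{-\alpha})$, one has $\sL^{(k)}(y,\bfs)$ bounded by a universal constant and $\Lambda^{(k)}(y,\bfs)\ge\eps^{-\gamma}$, so $\sL^{(m+k)}(x,\bft)\le(\textrm{const})+1=:K_\sharp$ and $m_{K_\sharp}(x,\bft)\le m+k\le m+\eps^{-\alpha}$.

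If instead the orbit enters $B_{\eps^{2\gamma}}(\PP_0)$, let $m+\tau$ ($0\le\tau\le\eps^{-\alpha}$) be the first such time; I claim $\widehat m(x,\bft)\le m+\tau$. Its first two defining conditions are immediate, and for the third: since $\eps^\gamma<\tau_*$ and $f_\bft^{m+\tau}(x)$ lies within $\eps^{2\gamma}$ of a fixed point $p_0$, the component $J$ of $f_\bft^{-(m+\tau)}(B_{\eps^\gamma}(\PP_0))$ through $x$ satisfies $J\subset U^{(m+\tau)}(x,\bft)$, so the distortion sum in the definition is at most $\sL^{(m+\tau)}(x,\bft)$ (up to the harmless discrepancy between one- and two-sided derivatives), and it remains to check $\sL^{(m+\tau)}(x,\bft)\le\eps^{-\gamma\alpha}$. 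For $\tau=0$ this is the hypothesis. For $\tau\ge 1$ one uses that orbits in a component of $I^\parab$ move away from its fixed point, so that $B_{\eps^{2\gamma}}(p_0)$ cannot be entered from inside $I^\parab(p_0)$ and the last step before $m+\tau$ is from the uniformly expanding part or from the far side of $I^\rep$; combining this with the contraction mechanism of the previous paragraph — which reduces the problem to the case that the wandering of the orbit before time $m+\tau$ was short (otherwise $m_{K_\sharp}(x,\bft)\le m+\tau$ already) — and with Lemma~\ref{lem:slrec} and the escape estimates, gives $\sL^{(m+\tau)}(x,\bft)\le\eps^{-\gamma\alpha}$.

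The hard part is exactly this $\sL$-bookkeeping near $\PP_0$: the crude estimate coming from the weak rate $1+c\eps^{2\gamma\alpha}$ is only $\sL^{(k)}(y,\bfs)\lesssim\eps^{-2\gamma\alpha}$, which already exceeds the threshold $\eps^{-\gamma\alpha}$, so one genuinely has to argue that the $\sL$ accumulated during an excursion near $\PP_0$ is annihilated by the expansion burst of the ensuing escape rather than persisting, and to make this uniform over repeated and over ``shallow'' $I^\parab$-excursions (the latter behaving like uniform-expansion blocks). This amounts to an induction over the successive $I^\parab$-excursions of the orbit of $y$, with Proposition~\ref{prop:escape}(1)--(2) controlling each excursion and Lemma~\ref{lem:slrec} peeling off the inherited term; it is precisely here that $\kappa(1+\alpha)>1$ and $2\gamma(1+\alpha)<\alpha$ are needed (the former to make a deep escape expand by more than a power of $\eps$, the latter to keep the relevant return/escape times within the budget $\eps^{-\alpha}$).
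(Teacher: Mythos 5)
Your overall strategy is the same as the paper's: start the analysis at $m=m_{\eps^{-\gamma\alpha}}(x,\bft)$, follow the orbit of $y=f_\bft^m(x)$, use Lemma~\ref{lem:slrec} to peel off the inherited term $\sL^{(m)}\le\eps^{-\gamma\alpha}$ via accumulated expansion, and split on whether the excursion ends because expansion has crossed $\eps^{-\gamma}$ (yielding $m_{K_\sharp}$) or because the orbit has come very close to $\PP_0$ (yielding $\widehat m$). The paper implements the stopping rule at essential return times of $(y,\bfs)$, stopping when $f_\bfs^{r_k}(y)\in I_{\Delta_*\eps}$ or $\Lambda^{(r_k)}(y,\bfs)>\eps^{-\gamma}$; you stop at the first entry of the orbit of $y$ into $B_{\eps^{2\gamma}}(\PP_0)$. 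These are close, but your rule forces you to handle the last jump into the small ball separately, which is where the proof stalls.

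The genuine gap is the second case. The whole point of introducing $\widehat m$ is to have something to say when the orbit enters the small ball before the expansion has won; you need, concretely, that $\sL^{(m+\tau)}(x,\bft)\le\eps^{-\gamma\alpha}$ at the first entry time $m+\tau$. In the paper this is a one-line computation at $r_{k_0}$: with $\Lambda^{(r_{k_0})}(y,\bfs)\ge\lambda_*^{k_0}$ (each return expands by $\ge\lambda_*$) and $\sL_i\le K_*$ one gets $\sL^{(m+r_{k_0})}(x,\bft)\le\lambda_*^{-\alpha}\eps^{-\gamma\alpha}+K_*\lambda_*^\alpha/(\lambda_*^\alpha-1)<\eps^{-\gamma\alpha}$ for $\eps$ small. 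You instead write ``combining this with the contraction mechanism of the previous paragraph ... and with Lemma~\ref{lem:slrec} and the escape estimates, gives $\sL^{(m+\tau)}(x,\bft)\le\eps^{-\gamma\alpha}$,'' and then acknowledge that ``the hard part is exactly this $\sL$-bookkeeping near $\PP_0$.'' That is the step the lemma requires, and as written it is asserted, not proved. To close it, you need to isolate the two quantitative facts that make it work: (a) $\Lambda^{(\tau)}(y,\bfs)\ge\lambda_*>1$ because the entering step comes from the uniformly expanding region (your observation that a point in $I^\parab$ at distance $\ge\eps^{2\gamma}$ moves away), and (b) $\sL^{(\tau)}(y,\bfs)$ is bounded by a universal constant, via the same geometric decomposition as Case A. Then $\sL^{(m+\tau)}(x,\bft)\le\lambda_*^{-\alpha}\eps^{-\gamma\alpha}+O(1)<\eps^{-\gamma\alpha}$.

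Two smaller points. First, your cycle argument in Case A conflates a point estimate on $Df$ outside $B_{\eps^{2\gamma}}(\PP_0)$ with bounds on $\Lambda^{(k)}$ and $\sL^{(k)}$, which are infima/suprema over the interval $U^{(k)}(y,\bfs)$; inside each excursion this is fine because you invoke Proposition~\ref{prop:escape}, but between excursions you should say explicitly that the choice of $\tau_*$ keeps $U^{(k)}$ away from $f^{-1}(I)\cap I$, as in the proof of Lemma~\ref{lem:distgeneral}. Second, the assertion that $O(\varphi(\eps))$ cycles occur by time $k=o(\eps^{-\alpha})$ is not justified (the time the orbit spends outside $I$ between cycles is not a priori bounded) and is also not needed: if that time is long, $\Lambda$ has already grown past $\eps^{-\gamma}$, which is all you use. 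It would be cleaner to drop the cycle-count claim and argue directly that $\Lambda$ exceeds $\eps^{-\gamma}$ within $\eps^{-\alpha}$ steps, as the paper does by bounding $r_{k_0}$.
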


\begin{proof} Write $m=m_{\eps^{-\gamma\alpha}}(x,\bft)$, $y=f_\bft^m(x)$ and $\bfs=\sigma^m\bft$. If $y\in I_{\Delta_*\eps}$
then  $d(y,P_0)\le \eps^{2\gamma}$ and therefore $\widehat m\le m$ and
the lemma holds. Assume now that $y\not\in I_{\Delta_*\eps}$.
Let $r_0=0$, $r_i=r_i(y,\bfs)$, $i=1,2,\ldots$. Let
$$k_0=\inf\{k\ge 1: \mbox{ either }f_\bfs^{r_k}(y)\in I_{\Delta_* \eps} \mbox{ or } \Lambda^{(r_k)}(y,\bfs)> \eps^{-\gamma}\}.$$ By Proposition~\ref{prop:escape}, for each $1\le k\le k_0$, we have
$r_k-r_{k-1}\le \eps^{-\alpha/(1+\alpha)}$ and
$$\sL_k:=\sL^{(r_k-r_{k-1})}(f_\bfs^{r_{k-1}}(y),\sigma^{r_{k-1}}\bfs) \le K_*,$$
and by Lemma~\ref{lem:constantstau*lambda*}, $$\Lambda_k:=\Lambda^{(r_k-r_{k-1})}(f_\bfs^{r_{k-1}}(y),\sigma^{r_{k-1}}\bfs)\ge \lambda_*>1.$$
Since $\Lambda^{(r_{k_0-1})}(x,\bft)\le \eps^{-\gamma}$ we have $\lambda_*^{k_0-1}\le \eps^{-\gamma}$, hence $k_0\lesssim \log (1/\eps)$ so $r_{k_0}\lesssim \eps^{-\alpha/(1+\alpha)} \log (1/\eps)\ll \eps^{-\alpha}.$
If $\Lambda^{(r_{k_0})}(y,\bfs)> \eps^{-\gamma}$, then
$$\sL^{(m+r_{k_0})}(x,\bft)\le [\Lambda^{(r_{k_0})}(y,\bfs)]^{-\alpha} \sL^{(m)}(x,\bft)+\sum_{i=1}^{k_0} \lambda_*^{-\alpha(i-1)} \sL_{i}\le 1+ K_*\lambda_*^\alpha/(\lambda_*^\alpha-1),$$
so for $K_\sharp$ large enough, $m_{K_\sharp}(x,\bft)\le m+r_{k_0}$ and we are done.
Otherwise, by the definition of $k_0$,  we have $f_{\bfs}^{r_{k_0}}(y)\in I_{\Delta_*\eps}$ and
$$\sL^{(m+r_{k_0})}(x,\bft)\le \lambda_*^{-k_0\alpha} \sL^{(m)}(x,\bft)+\sum_{i=1}^{k_0}\le \lambda_*^{-\alpha}\eps^{-\gamma\alpha}+C\lambda_*^\alpha/(\lambda_*^\alpha-1)<\eps^{-\gamma\alpha},$$
which implies that $\widehat m(x,\bft)\le m+r_{k_0}$, and so we are done again.
\end{proof}

Now let fix $K_\sharp\ge K_*+1$ as above, and define
$$M(x,\bft)=\min \left(m_{K_\sharp} (x,\bft), \widehat{m}(x,\bft)\right).$$
Combining the last lemma with Proposition~\ref{prop:gamma-est} immediately gives us the following corollary.
\begin{coro} \label{cor:Mtautailest}
There exist $C>0$ and $\rho>0$ such that
$$\mathbb{P}_\eps\left\{M(x,\bft)> n \eps^{-\alpha}\varphi(\eps)\right\}\le C e^{-\rho n},$$
provided that $\eps>0$ is small enough.
\end{coro}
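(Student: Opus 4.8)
The plan is to combine Lemma~\ref{lem:Mtaum} with the large-deviation estimate of Proposition~\ref{prop:gamma-est} and integrate over the fibres. Recall that $M(x,\bft)=\min(m_{K_\sharp}(x,\bft),\widehat m(x,\bft))$, and by Lemma~\ref{lem:Mtaum}, for $x\in I$ we have the pointwise domination
$$M(x,\bft)\le m_{\eps^{-\gamma\alpha}}(x,\bft)+\eps^{-\alpha}.$$
Hence the event $\{M(x,\bft)> n\eps^{-\alpha}\varphi(\eps)\}$ is, for $x\in I$, contained in the event $\{m_{\eps^{-\gamma\alpha}}(x,\bft)> n\eps^{-\alpha}\varphi(\eps)-\eps^{-\alpha}\}$, and since $\varphi(\eps)=\log(1/\eps)\to\infty$ we have $n\eps^{-\alpha}\varphi(\eps)-\eps^{-\alpha}\ge (n-1)\eps^{-\alpha}\varphi(\eps)$ for $\eps$ small. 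Applying Proposition~\ref{prop:gamma-est} with $n$ replaced by $n-1$ gives the bound $Ce^{-\rho(n-1)}=C'e^{-\rho n}$ on the part of the probability supported over $I\times\Ome$, after adjusting the constant.

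For the part supported over $(\T\setminus I)\times\Ome$, the first essential return brings the orbit into $I$ in a controlled number of steps: since $f$ is uniformly expanding outside $I$, the first entry time $r_1$ (which for $x\notin I$ is just $\inf\{s\ge 0: f_\bft^s(x)\in I\}$, since $E(x,\bft)=0$) satisfies $\Leb\{x\notin I: r_1(x,\bft)=n\}\le e^{-\rho_1 n}$ uniformly in $\bft$, exactly as used in the proof of Lemma~\ref{lem:X1t}. Writing $x'=f_\bft^{r_1}(x)\in I$ and $\bft'=\sigma^{r_1}\bft$, one has $M(x,\bft)\le r_1(x,\bft)+M(x',\bft')$ (because all the quantities $E$, $m_{K_\sharp}$, $\widehat m$ entering $M$ are controlled past the moment of first entry into $I$, just as in the cocycle relation of Lemma~\ref{lem:slrec}); then one decomposes according to the value of $r_1$ and applies the case already treated for starting points in $I$, summing the resulting geometric series in $r_1$. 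This reduces everything to the estimate over $I\times\Ome$.

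Concretely, I would organise the argument as: (i) reduce to $x\in I$ via the uniform exponential tail on $r_1$ outside $I$ and the subadditivity $M(x,\bft)\le r_1(x,\bft)+M(f_\bft^{r_1}(x),\sigma^{r_1}\bft)$, splitting the event $\{M>n\eps^{-\alpha}\varphi(\eps)\}$ into $\{r_1>\tfrac12 n\eps^{-\alpha}\varphi(\eps)\}$ and its complement; (ii) on the good part, for $x\in I$ invoke Lemma~\ref{lem:Mtaum} to replace $M$ by $m_{\eps^{-\gamma\alpha}}+\eps^{-\alpha}$; (iii) absorb the additive $\eps^{-\alpha}$ into the exponent by a shift $n\mapsto n-1$ and conclude with Proposition~\ref{prop:gamma-est}; (iv) collect constants and redefine $\rho$. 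The main obstacle is step (i): one must be careful that the cocycle-type inequality for $M$ really holds — i.e. that having achieved a $K_\sharp$-scale expansion time or reached the $\widehat m$-condition for $(f_\bft^{r_1}(x),\sigma^{r_1}\bft)$ yields the corresponding (possibly slightly weakened) property for $(x,\bft)$ at the shifted time — which requires the distortion-composition estimate of Lemma~\ref{lem:slrec} together with the observation that $f$ is uniformly expanding on the initial segment outside $I$, so the extra factor $\Lambda^{-\alpha}$ picked up is harmless. Everything else is a routine summation of geometric series.
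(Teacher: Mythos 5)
Your steps (ii)--(iv) are exactly the paper's argument: the paper states this as an immediate consequence of Lemma~\ref{lem:Mtaum} combined with Proposition~\ref{prop:gamma-est}, and absorbing the additive $\eps^{-\alpha}$ by shifting $n\mapsto n-1$ (valid since $\varphi(\eps)\ge 1$) is the whole content. That part is correct and is the intended proof.

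Your step (i) goes beyond what the paper writes, and you are right that Lemma~\ref{lem:Mtaum} is stated only for $x\in I$ while the corollary's displayed probability is not explicitly restricted. (In fact the paper uses this corollary only through Proposition~\ref{prop:tailestmlarge}, whose probability is taken over $I\times\Omega_\eps^\N$, so the restriction is implicit.) However, the reduction you propose via the first entry time $r_1$ and a ``subadditivity'' $M(x,\bft)\le r_1(x,\bft)+M(f_\bft^{r_1}(x),\sigma^{r_1}\bft)$ is both heavier than needed and not literally true with the \emph{same} constant $K_\sharp$: the cocycle inequality of Lemma~\ref{lem:slrec} gives $\sL^{(r_1+m')}(x,\bft)\le \sL^{(m')}(x',\bft')+\Lambda^{-\alpha}\sL^{(r_1)}(x,\bft)$, so the time $r_1+m_{K_\sharp}(x',\bft')$ is only a $K'$-scale expansion time for a slightly larger $K'$, and one would have to re-inflate $K_\sharp$ to close that step. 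The much simpler observation is that for $x\notin I$ one has $E(x,\bft)=0$, and since $Df_t=Df\ge 1$ everywhere one gets $\sL^{(1)}(x,\bft)\le 1<K_\sharp$, hence $m_{K_\sharp}(x,\bft)=1$ and $M(x,\bft)\le 1$; thus the event $\{M(x,\bft)>n\eps^{-\alpha}\varphi(\eps)\}$ is empty over $(\T\setminus I)\times\Omega_\eps^\N$ for every $n\ge 1$, and no separate tail argument is needed there.
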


\subsection{Proof of Theorem~\ref{thm:tail-estimate}}
Let $$E_1=\{(x,\bft)\in I\times \Omega_\eps^\N: M(x,\bft)< m_{K_\sharp}(x,\bft)\}.$$
Note that $E_1\subset I_{\Delta_*\eps}\times \Omega_\eps^\N$ and that when $(x,\bft)\in E_1$
then $M(x,\bft)=\widehat m(x,\bft)$.
For each $(x,\bft)\in E_1$, let $W_x^\bft$ denote the component of $f_\bft^{-M(x,\bft)}(B_{\eps^{2\gamma}}(\PP_0))$ which contains $x$.

\begin{lemma} 
If $(x,\bft), (x',\bft)\in E_1$, then either $W_x^\bft=W_{x'}^\bft$ or $W_x^\bft\cap W_{x'}^\bft=\emptyset$.
\end{lemma}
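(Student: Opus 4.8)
The plan is to reduce everything to the case $M(x,\bft)=M(x',\bft)$, which is trivial, and to rule out $M(x,\bft)\neq M(x',\bft)$ using the minimality built into the definition of $\widehat m$. Write $m=M(x,\bft)$, $m'=M(x',\bft)$ and assume without loss of generality $m\le m'$. Since $(x,\bft),(x',\bft)\in E_1$ we have $m=\widehat m(x,\bft)$ and $m'=\widehat m(x',\bft)$; in particular $f_\bft^m$ restricts to a homeomorphism from $W_x^\bft$ onto the component $B_{\eps^{2\gamma}}(p)$ of $B_{\eps^{2\gamma}}(\PP_0)$ containing $f_\bft^m(x)$, and to a homeomorphism from the component $J_x$ of $f_\bft^{-m}(B_{\eps^\gamma}(\PP_0))$ containing $x$ onto $B_{\eps^\gamma}(p)$, with $W_x^\bft\subset J_x$ (because $B_{\eps^{2\gamma}}(\PP_0)\subset B_{\eps^\gamma}(\PP_0)$), and on $J_x$ the distortion of $f_\bft^m$ is controlled by the third bullet defining $\widehat m$.

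First I would show that if $W_x^\bft\cap W_{x'}^\bft\neq\emptyset$ then $W_{x'}^\bft\subseteq J_x$. Pick $z$ in the intersection. Factoring $f_\bft^{m'}=f_{\sigma^m\bft}^{m'-m}\circ f_\bft^m$ and using injectivity of $f_\bft^{m'}$ on $W_{x'}^\bft$, the map $f_{\sigma^m\bft}^{m'-m}$ is injective on the connected set $f_\bft^m(W_{x'}^\bft)$ and sends it onto a ball of radius $\eps^{2\gamma}$; since every $Df_t\ge 1$, this forces $|f_\bft^m(W_{x'}^\bft)|\le 2\eps^{2\gamma}$. As $z\in W_x^\bft$, $f_\bft^m(z)\in B_{\eps^{2\gamma}}(p)$, so $f_\bft^m(W_{x'}^\bft)$ is an interval of length at most $2\eps^{2\gamma}$ meeting $B_{\eps^{2\gamma}}(p)$, hence lying well inside $B_{\eps^\gamma}(p)=f_\bft^m(J_x)$ for $\eps$ small — this is exactly where the gap between the two scales $\eps^{2\gamma}$ and $\eps^\gamma$ is used. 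Since $W_{x'}^\bft$ is connected and contains the point $z\in J_x$, were it not contained in $J_x$ it would contain a point of $\partial J_x$, whose image under $f_\bft^m$ lies on $\partial B_{\eps^\gamma}(p)$, contradicting the previous line. Thus $W_{x'}^\bft\subseteq J_x$, and in particular $x'\in J_x$.

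Next I would show $m=m'$. Suppose $m<m'$. Because $x'\in J_x$, the component of $f_\bft^{-m}(B_{\eps^\gamma}(\PP_0))$ containing $x'$ is $J_x$ itself, so the distortion condition in the definition of $\widehat m$ holds for $(x',\bft)$ at time $m$ — it is verbatim the one valid for $(x,\bft)$. The distance condition holds at time $m$ since $f_\bft^m(x')\in f_\bft^m(W_{x'}^\bft)$, which sits within $O(\eps^{2\gamma})$ of $p$ (the numerical constant being absorbed in the usual way, or by observing that near a neutral fixed point $f_\bft$ pushes points outward, so $f_\bft^m(W_{x'}^\bft)$ is in fact contained in $B_{\eps^{2\gamma}}(p)$). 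Finally $m\ge E(x',\bft)$: one has $m\ge E(x,\bft)$, and since $x'$ lies in the small interval $J_x$ around $x$, Claim 1 in the proof of Proposition~\ref{prop:escape} shows $E(x',\bft)$ differs from $E(x,\bft)$ by a bounded amount, which is absorbed as above. Hence $m$ meets all three defining conditions of $\widehat m(x',\bft)$, contradicting the minimality of $m'>m$; so $m=m'$.

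With $m=m'$, $W_x^\bft$ and $W_{x'}^\bft$ are both connected components of the single set $f_\bft^{-m}(B_{\eps^{2\gamma}}(\PP_0))$, hence equal or disjoint, and since they share $z$ they coincide. The crux is the first step: squeezing, out of the bare non-contraction of $f_\bft$, enough geometric control to trap $W_{x'}^\bft$ inside $J_x$. This works only because of the deliberate two-scale design of $\widehat m$ — target ball of radius $\eps^{2\gamma}$ sitting deep inside the distortion domain of radius $\eps^\gamma$ — and the same buffer is what makes the bounded-distortion estimates for the induced map go through later. Once step one is in hand, the reduction to $m=m'$ and the final dichotomy are routine.
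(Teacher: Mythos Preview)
Your first step---trapping $W_{x'}^\bft$ inside $J_x$ using $|f_\bft^m(W_{x'}^\bft)|\le 2\eps^{2\gamma}$ and the buffer between the scales $\eps^{2\gamma}$ and $\eps^\gamma$---is correct and is essentially how the paper gets $f_\bft^M(x')\in B_{3\eps^{2\gamma}}(\PP_0)$.

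The gap is in your second step. You try to show that $m$ already satisfies the three defining conditions of $\widehat m(x',\bft)$, contradicting $m'>m$. The third condition (distortion on $J_x$) transfers verbatim, and the first condition $m\ge E(x',\bft)$ can indeed be salvaged (since $f_\bft^{E(x,\bft)}(x)\notin I$ forces $m>E(x,\bft)$, and Claim~1 gives $E(x',\bft)\le E(x,\bft)+1$). But the second condition, $\dist(f_\bft^m(x'),\PP_0)<\eps^{2\gamma}$, does \emph{not} follow. All you have is $f_\bft^m(x')\in f_\bft^m(W_{x'}^\bft)\subset B_{3\eps^{2\gamma}}(p)$, and the threshold $\eps^{2\gamma}$ in the definition of $\widehat m$ is a hard constant that cannot be ``absorbed''. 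Your alternative---that near a neutral fixed point $f_\bft$ pushes points outward, forcing $f_\bft^m(W_{x'}^\bft)\subset B_{\eps^{2\gamma}}(p)$---is false for the random map: for $|t|$ comparable to $\eps$ the perturbation $f_t$ can move points \emph{toward} $p$, which is the whole source of the intermittency analysed in this paper. So the minimality-of-$\widehat m$ contradiction does not close.

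The paper avoids this by contradicting a different hypothesis, namely $(x',\bft)\in E_1$, i.e.\ $m_{K_\sharp}(x',\bft)>M'$. From $f_\bft^M(x')\in B_{3\eps^{2\gamma}}(\PP_0)$ (which you also obtain) it applies Proposition~\ref{prop:escape} to the escape of $f_\bft^M(x')$: with $E=E(f_\bft^M(x'),\sigma^M\bft)$ one gets $\sL^{(E)}\le K_*$ and $\Lambda^{(E)}\gtrsim\eps^{-2\gamma}$. Feeding this into Lemma~\ref{lem:slrec} yields $\sL^{(M+E)}(x',\bft)\le K_*+C\eps^{2\gamma\alpha}\sL^{(M)}_{C\eps^{2\gamma}\tau_*}(x',\bft)<K_\sharp$, so $m_{K_\sharp}(x',\bft)\le M+E\le M'$, contradicting $(x',\bft)\in E_1$. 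The point is that the argument exploits the \emph{expansion} produced by escaping a $3\eps^{2\gamma}$-neighbourhood, rather than trying to pin down the exact position of $f_\bft^m(x')$.
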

\begin{proof}
Write $M=M(x,\bft)$, $M'=M(x',\bft)$, $W=W_x^\bft$, $W'=W_{x'}^\bft$. Arguing by contradiction, assume that $W\cap W'\not=\emptyset$ and $W\not=W'$. Then $M\not=M'$. Assume without loss of generality that $M'>M$. Then $f_\bft^M(x')\not\in B_{\eps^{2\gamma}}(\PP_0)$. Since
$|f_\bft^M(W')|\le |f_\bft^{M'}(W')|=2 \eps^{2\gamma}$, and $f_\bft^M(W)\subset B_{\eps^{2\gamma}}(\PP_0)$ we have
$f_\bft^M(x')\in f_\bft^M(W')\subset B_{3\eps^{2\gamma}}(\PP_0)$. Now let $E=E(f_\bft^M(x'), \sigma^M\bft)$.
Since $M'=\widehat m(x',\bft)$ one has $M'\ge M+E$.
By Proposition~\ref{prop:escape}, $\sL^{(E)}(f_\bft^M(x'),\sigma^M\bft)\le K_*$ and $\Lambda^{(E)}(f_\bft^M(x'),\sigma^M\bft) \gtrsim \eps^{-2\gamma}$. Thus
$$\sL^{M+E}(x',\bft)\le \sL^{(E)}(f_\bft^M(x'),\sigma^M\bft)+ C \eps^{2\gamma\alpha} \sL_{C\eps^{2\gamma}\tau_*}^{(M)} (x',\bft)<K_*+1\le K_\sharp.$$
Thus $m_{K_\sharp} (x',\bft)\le M+E\le  M'< m_{K_\sharp}(x',\bft),$ a contradiction!
\end{proof}

Let $\mathcal{W}=\bigcup_{(x,\bft)\in E_1} W_x^\bft\times \{\bft\}$.  
The lemma above implies that there is a well-defined function $H: \mathcal{W}\to \T\times \Omega_\eps^\N$ such that for any $(x,\bft)\in E_1$, $H(x', \bft)=F^{M(x,\bft)}(x, \bft)$ holds for all $x'\in W_x^\bft$.
For each $n\ge 2$, define $$E_n=\{(x,\bft)\in E_1: H^i(x,\bft)\in E_1 \mbox{ for all }i=0,1,\ldots, n-1\}.$$
Moreover, let $M_n(x,\bft)=M(H^{n-1}(x,\bft))$ for $(x,\bft)\in E_n$. So $M_1(x,\bft)=M(x,\bft)$.

\begin{lemma} \label{lem:Mnest}
There exist $C>0$ and $\rho>0$ such that
\begin{enumerate}
\item $\mathbb{P}_\eps(E_n)\le Ce^{-\rho n}$.
\item For each $n, m\ge 1$,
$$\mathbb{P}_\eps(\{(x,\bft)\in E_{n}: M_n(x,\bft)\ge m\eps^{-\alpha}\varphi(\eps)\})\le C e^{-\rho m}.$$
\end{enumerate}
\end{lemma}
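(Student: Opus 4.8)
The plan is to deduce both items from a single one–step contraction estimate for the induced map $H$. For $n\ge 1$ and $m\ge 0$ put
$$Q_n(m)=\mathbb{P}_\eps\big(\{(x,\bft)\in E_n:\ M_n(x,\bft)\ge m\eps^{-\alpha}\varphi(\eps)\}\big),$$
so that item~(1) is $Q_n(0)\le Ce^{-\rho n}$ and item~(2) is $Q_n(m)\le Ce^{-\rho m}$ uniformly in $n$. I will show there is $\lambda_0=\lambda_0(\eps)$ with $\lambda_0\to0$ as $\eps\to0$ and a constant $C_1$ with $Q_n(m)\le C_1\lambda_0\,Q_{n-1}(m)$ for $n\ge2$, together with the base cases $Q_1(m)\le Ce^{-\rho m}$ (this is exactly Corollary~\ref{cor:Mtautailest} applied to $M=M_1$, since $M_1=M$) and $Q_1(0)=\mathbb{P}_\eps(E_1)\le\Leb(I_{\Delta_*\eps})\lesssim\eps^{1/(1+\alpha)}$. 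Choosing $\eps_0$ so small that $C_1\lambda_0<1$ and iterating, $Q_n(m)\le(C_1\lambda_0)^{n-1}Q_1(m)$, which yields both conclusions (for item~(1) one shrinks $\rho$ to also absorb $-\log(C_1\lambda_0)$).

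The dynamical input is the following dichotomy. Since $\kappa(1+\alpha)>1$ we have $\alpha/(1+\alpha)<\kappa\alpha$, hence $\eps^{-\alpha/(1+\alpha)}<\eps^{-\kappa\alpha}$ for small $\eps$; so if $y\in B_{\eps^{2\gamma}}(\PP_0)\setminus I_{\Delta_*\eps}$ — so that $\dist(f(y),y)>\Delta_*\eps$ — then Proposition~\ref{prop:escape}(1) gives $E(y,\bfs)\le\eps^{-\alpha/(1+\alpha)}<\eps^{-\kappa\alpha}$, and then Proposition~\ref{prop:escape}(2) gives $m_{K_*}(y,\bfs)=E(y,\bfs)$. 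As $K_\sharp\ge K_*$ this forces $m_{K_\sharp}(y,\bfs)=E(y,\bfs)\le\widehat m(y,\bfs)$, i.e.\ $(y,\bfs)\notin E_1$. Hence for every $k\ge1$ and every $\bfs$ we have $E_k^\bfs\cap B_{\eps^{2\gamma}}(\PP_0)\subset I_{\Delta_*\eps}$; since $2\gamma<\alpha/(1+\alpha)$ one checks $I_{\Delta_*\eps}\subset B_{\eps^{2\gamma}}(\PP_0)$ for small $\eps$, so the operative fact is simply $\Leb(I_{\Delta_*\eps})/(2\eps^{2\gamma})\lesssim\eps^{1/(1+\alpha)-2\gamma}\to0$. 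In words: once a random orbit returns to the large scale $B_{\eps^{2\gamma}}(\PP_0)$ but misses the tiny set $I_{\Delta_*\eps}$, it has already attained $K_\sharp$-scale expansion and has left the bad set $E_1$; so at each step of $H$ one stays in $E_1$ only on a set of relative measure $\lesssim\lambda_0$.

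To obtain the recursion, fix $\bft$ and decompose $E_n^\bft$ into the disjoint components $W=W_x^\bft$ furnished by the disjointness lemma just above; on $W\times\{\bft\}$ the map $H$ is $f_\bft^{M_W}$ ($M_W:=M(x,\bft)$), a homeomorphism onto $B_{\eps^{2\gamma}}(p_W)$, and — using the defining inequality of $\widehat m$ (resp.\ of $m_{K_\sharp}$) together with the matching of scales $\eps^{\gamma}\leftrightarrow\eps^{-\gamma\alpha}$, which turns $\sum_j(Df_\bft^{M_W-j})^{-\alpha}\le\eps^{-\gamma\alpha}$ into $\sum_j|f_\bft^j(J)|^{\alpha}=O(1)$ — one gets $\Dist(f_\bft^{M_W}|W)=O(1)$; this is the routine distortion bookkeeping of Section~\ref{subsec:compactinL1}, carried out with one–sided derivatives at the finitely many breakpoints. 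Changing variables $y=f_\bft^{M_W}(x)$ on each $W$, and using $(x,\bft)\in E_n\Leftrightarrow H(x,\bft)\in E_{n-1}$ and $M_n(x,\bft)=M_{n-1}(H(x,\bft))$, one bounds $Q_n(m)$ by $e^{C_0}\int\sum_W(|W^\bft|/2\eps^{2\gamma})\,\Leb(\{y\in B_{\eps^{2\gamma}}(p_W):(y,\sigma^{M_W}\bft)\in E_{n-1},\ M_{n-1}(y,\sigma^{M_W}\bft)\ge m\eps^{-\alpha}\varphi(\eps)\})\,d\theta_\eps^\N(\bft)$. Applying Fubini with $\bft$ split into its first $M_W$ coordinates (which determine $W$, $M_W$ and $p_W$) and the remaining coordinates $\bfs=\sigma^{M_W}\bft$ (independent, distributed as $\theta_\eps^\N$): the $\bfs$-integral of the last factor is $\le\mathbb{P}_\eps(\{(y,\bfs)\in E_{n-1}:M_{n-1}(y,\bfs)\ge m\eps^{-\alpha}\varphi(\eps)\})=Q_{n-1}(m)$, uniformly in $W,p_W$; and $\int\sum_W|W^\bft|/(2\eps^{2\gamma})\,d\theta_\eps^\N\le\lambda_0$ because each $W$ has $|W|\lesssim\eps^{2\gamma}\,\Lambda^{-1}\lesssim\eps^{2\gamma+\kappa}$ (Lemma~\ref{lem:distgeneral}, the centre of $W$ lying in $I_{\Delta_*\eps}$) and contains a point of $I_{\Delta_*\eps}$, so $\bigcup_WW^\bft$ sits in a $C\eps^{2\gamma+\kappa}$-neighbourhood of $I_{\Delta_*\eps}$, of measure $\lesssim\eps^{1/(1+\alpha)}+\eps^{2\gamma+\kappa}$. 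This gives $Q_n(m)\le C_1\lambda_0\,Q_{n-1}(m)$ with $\lambda_0=C(\eps^{1/(1+\alpha)-2\gamma}+\eps^{\kappa})$, completing the scheme.

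The main obstacle is the Fubini step: one must make precise that conditioning on the combinatorial data of the component $W$ genuinely frees the tail coordinates $\sigma^{M_W}\bft$ as an independent $\theta_\eps^\N$-sample — so that the $E_{n-1}$/$M_{n-1}$ event collapses to $Q_{n-1}(m)$ — even though $M_W$ is itself a function of $\bft$; the clean way is to index by the countable family of admissible triples (component, return time, target fixed point) and integrate each contribution separately. A secondary, purely technical point — anticipated in the introduction — is that $f_\bft^{M_W}$ is only piecewise $C^{1+\alpha}$, so every change of variables and distortion bound above must be run with one–sided derivatives, consistently with the definitions of $\sL$, $\Lambda$, $\widehat m$.
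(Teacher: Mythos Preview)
Your argument is correct and runs along the same lines as the paper's (bounded distortion of the one–step induced map $H$ on the components $W$, then Fubini in the noise variable, then Corollary~\ref{cor:Mtautailest}), but the tactics differ in two noteworthy ways.

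First, for item~(1) the paper does not use your observation that $E_1\subset I_{\Delta_*\eps}\times\Omega_\eps^\N$. It enlarges $E_n$ to $\mathcal W_n=H^{-n}(\mathcal W)$ and shows only that $\mathcal W\subset B_{\lambda^{-1}\eps^{2\gamma}}(\PP_0)$ for some fixed $\lambda>1$; plugging $A=\mathcal W_1^{\sigma^s\bft}$ into the distortion inequality (\ref{eqn:distortionWn}) then gives a contraction factor bounded away from~$1$ but \emph{not} tending to~$0$. Your route, exploiting $|I_{\Delta_*\eps}|\asymp\eps^{1/(1+\alpha)}\ll\eps^{2\gamma}$ together with $|W|\lesssim\eps^{2\gamma+\kappa}$, yields a contraction $\lambda_0\to0$, which is sharper and lets you treat~(1) and~(2) with a single recursion $Q_n(m)\le C_1\lambda_0\,Q_{n-1}(m)$. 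Second, for item~(2) the paper does not iterate at all: it makes a \emph{single} reduction $\mathbb P_\eps(X)\lesssim\mathbb P_\eps(Y)$ (your $Q_n(m)\lesssim Q_1(m)$), using the bound $\mathbb P_\eps(\mathcal W_{n-1})\lesssim\eps^{2\gamma}$ to cancel the $\eps^{-2\gamma}$ coming from the distortion inequality, and then invokes Corollary~\ref{cor:Mtautailest} once. Your iterated bound $Q_n(m)\le(C_1\lambda_0)^{n-1}Q_1(m)$ is stronger than what is asked, but of course suffices.

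Two small points of care in your write-up. The justification of $|W|\lesssim\eps^{2\gamma+\kappa}$ via Lemma~\ref{lem:distgeneral} needs the extra remark that $W\subset U_{\tau_*}^{(E)}(x,\bft)$ (which follows since $f_\bft^E(W)\subset f_\bft^E(J)$ has length $\le2\eps^\gamma<\tau_*$) and that $Df_\bft^{M_W}\ge Df_\bft^{E}$ on $W$; then $\inf_W Df_\bft^{M_W}\ge\Lambda^{(E)}(x,\bft)\ge K_1^{-1}\eps^{-\kappa}$. And your ``matching of scales'' sentence for the distortion bound $\sum_j|f_\bft^j(W)|^\alpha=O(1)$ hides a genuine (if standard) step, since the $\sL$-bound is $\sup_y\sum_j$ while the length estimate needs $\sum_j\sup_y$; the paper handles this by appealing to the later Lemma~\ref{lem:distortionlemma}, and you should either cite the same or spell out the short bootstrap.
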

\begin{proof} For each $(x,\bft)\in\mathcal{W}_n:=H^{-n}(\mathcal{W})$, let $s_n(x,\bft)$ be such that $H^n(x,\bft)=f_\bft^{s_n(x,\bft)}(x,\bft)$ and let $J_{n}^\bft(x)$ denote the component of $\mathcal{W}_n^\bft$ which contains $x$.
Note that there exists $\lambda>1$ such that $\mathcal{W}\subset B_{\lambda^{-1}\eps^{2\gamma}}(\PP_0)$. So $s_n(\cdot,\bft)$ is constant in each of these intervals $J_n^\bft(x)$, $f_\bft^{s_n(x,\bft)}(J_n^\bft(x))$ is a component of $B_{\eps^{2\gamma}}(\PP_0)$, and moreover,
$\sum_{j=0}^{s_n(x,\bft)-1}|f_\bft^j(J_n^\bft(x))|^\alpha$ is bounded from above by a constant.  Thus by part (3) of Lemma~\ref{lem:distortionlemma}, there exists a constant $C>0$ such that for each set $A\subset B_{\lambda^{-1}\eps^{2\gamma}}(\PP_0)$, each $J=J_n^\bft(x)$ and $s=s_n(x,\bft)$, we have
\begin{equation}\label{eqn:distortionWn}
\frac{|J\cap f_\bft^{-s}(A)|}{|J|}\le \frac{C|A|}{C|A|+\eps^{2\gamma}}.
\end{equation}

Taking $A=\mathcal{W}_{1}^{\sigma^s\bft}$ we obtain that $|J\cap\mathcal{W}_{n+1}^\bft|/|J|$ is bounded away from one. Thus $|\mathcal{W}_{n+1}^\bft|/|\mathcal{W}_n^\bft|$ is bounded away from one. So $\mathbb{P}_\eps(\mathcal{W}_{n+1})/\mathbb{P}_\eps(\mathcal{W}_n)$ is bounded away from one. Therefore $\mathbb{P}_\eps(E_n)\le \mathbb{P}_\eps(\mathcal{W}_n)$ is exponentially small. This proves the first statement.

For the second statement, fix $m, n$, let $X=\{(x,\bft)\in E_n: M_n(x,\bft)\ge m\eps^{-\alpha}\varphi(\eps)\}$ and $Y=\{(y,\bfs): M(y,\bfs)\ge m\eps^{-\alpha}\varphi(\eps)\}$.
For each $s\ge 1$, let $\mathcal{V}(s)=\{(x,\bft)\in\mathcal{W}_{n-1}: s_{n-1}(x,\bft)=s\}$. 
Taking $A=Y^{\sigma^s \bft}$ in (\ref{eqn:distortionWn}) gives
$$\frac{|J\cap X^\bft|}{|J|}\le C |Y^{\sigma^s\bft}|\eps^{-2\gamma}, \mbox{ for each } J=J_{n-1}^\bft(x), s=s_{n-1}(x,\bft).$$
Thus $$|(X\cap \mathcal{V}(s))^\bft|\le C \eps^{-2\gamma} \sum_{s=1}^\infty |Y^{\sigma^s\bft}| |(\mathcal{V}(s))^\bft|.$$
Since the set $(\mathcal{V}(s))^\bft$ depends only on the coordinates $t_0, t_1,\cdots, t_{s-1}$ of $\bft$, by Fubini's theorem, we obtain
\begin{align*}
\mathbb{P}_\eps(X)& =\sum_{s=1}^\infty \mathbb{P}_\eps(X\cap\mathcal{V}(s))\\
 & \le C \eps^{-2\gamma}\sum_{s=1}^\infty \int_{\Omega_\eps^\N} |Y^{\sigma^s\bft}| |(\mathcal{V}(s))^\bft| d\theta_\eps^\N(\bft)\\
& \le C \eps^{-2\gamma} \sum_{s=1}^\infty \int_{\Omega_\eps^\N} |Y^{\sigma^s\bft}|d\theta_\eps^\N(\bft)\int_{\Omega_\eps^\N} |(\mathcal{V}(s))^\bft| d\theta_\eps^\N(\bft)\\
& \le C \eps^{-2\gamma} \sum_{s=1}^\infty \int_{\Omega_\eps^\N} |Y^{\bft}|d\theta_\eps^\N(\bft) \mathbb{P}_\eps(\mathcal{V}(s))\\
& = C\eps^{-2\gamma}  \mathbb{P}_\eps(\mathcal{W}_{n-1}) \mathbb{P}_\eps (Y)\\
& \le C \eps^{-2\gamma} \mathbb{P}_\eps(B_{\eps^{2\gamma}}(\PP_0))\mathbb{P}_\eps(Y)\lesssim \mathbb{P}_\eps(Y).
\end{align*}
By Corollary~\ref{cor:Mtautailest} the second statement of this lemma follows.
\end{proof}

\begin{prop}\label{prop:tailestmlarge}
There exist constants $K_@$, $C>0$ and $\rho>0$ such that
$$\mathbb{P}_\eps\left(\{ (x,\bft)\in I\times \Omega^\N_\eps; m_{K_@} (x,\bft) > n\eps^{-\alpha} \varphi(\eps)\}\right)\le Ce^{-\rho \sqrt{n}}.$$
\end{prop}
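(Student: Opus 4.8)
The plan is to express the first time at which the distortion functional $\sL^{(\cdot)}$ drops to a bounded value as the total length of finitely many induced returns of the map $H$ of Section~\ref{proof:thm-tail-estimate}, and then to trade off two exponentially small events: needing many induced steps versus needing one very long step. Concretely, for $(x,\bft)\in I\times\Omega_\eps^\N$ let $\ell=\ell(x,\bft)$ be the largest integer with $(x,\bft)\in E_\ell$, with the convention $\ell=0$ when $(x,\bft)\notin E_1$; thus $\{\ell\ge k\}=E_k$ for $k\ge1$, and by Lemma~\ref{lem:Mnest}(1) we have $\ell<\infty$ $\mathbb{P}_\eps$-a.e.\ and $\mathbb{P}_\eps(\ell\ge k)\le Ce^{-\rho k}$. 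When $\ell\ge1$ each $M_{j+1}(x,\bft)=\widehat m(H^j(x,\bft))$ ($0\le j<\ell$) is a $\widehat m$-step, while $H^\ell(x,\bft)\notin E_1$ forces $M(H^\ell(x,\bft))=m_{K_\sharp}(H^\ell(x,\bft))$. Set $S_\ell=\sum_{i=1}^\ell M_i(x,\bft)=s_\ell(x,\bft)$ and $T(x,\bft)=S_\ell+m_{K_\sharp}(H^\ell(x,\bft))=\sum_{i=1}^{\ell+1}M_i(x,\bft)$, with $S_0=0$ and $T=m_{K_\sharp}(x,\bft)$ when $\ell=0$. I will prove (i) $m_{K_@}(x,\bft)\le T(x,\bft)$ for $K_@:=K_\sharp+1$, and (ii) $\mathbb{P}_\eps(T(x,\bft)>n\eps^{-\alpha}\varphi(\eps))\le Ce^{-\rho\sqrt n}$; together they give the proposition.

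For (i) it suffices, since $T\ge M_1\ge E(x,\bft)$ (and $T=m_{K_\sharp}(x,\bft)\ge E(x,\bft)$ when $\ell=0$), to show $\sL^{(T)}(x,\bft)\le K_@$; for $\ell=0$ this is immediate from $\sL^{(m_{K_\sharp})}(x,\bft)\le K_\sharp$. Suppose $\ell\ge1$. The point $H^\ell(x,\bft)$ lies within $\eps^{2\gamma}$ of $\PP_0$ (it is the endpoint of a $\widehat m$-step) and $m_{K_\sharp}(H^\ell(x,\bft))\ge E(H^\ell(x,\bft))$, so Proposition~\ref{prop:escape}(1) (when $\dist(H^\ell(x,\bft),\PP_0)\ge\Delta_*\eps$) or Lemma~\ref{lem:distgeneral} (when $H^\ell(x,\bft)\in I_{\Delta_*\eps}$, using $\kappa>2\gamma$, which follows from $\gamma>1-\kappa$ and $2\gamma(1+\alpha)<\alpha<1$), together with $Df\ge1$, give $\Lambda:=\Lambda^{(m_{K_\sharp}(H^\ell(x,\bft)))}(H^\ell(x,\bft))\gtrsim\eps^{-2\gamma}$. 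Peeling off the terminal $m_{K_\sharp}$-step with Lemma~\ref{lem:slrec},
$$\sL^{(T)}(x,\bft)\le\sL^{(m_{K_\sharp}(H^\ell(x,\bft)))}(H^\ell(x,\bft))+\Lambda^{-\alpha}\,\sL_{\tau'}^{(S_\ell)}(x,\bft)\le K_\sharp+\Lambda^{-\alpha}\,\sL_{\tau'}^{(S_\ell)}(x,\bft),\qquad\tau'=\Lambda^{-1}\tau_*\lesssim\eps^{2\gamma}.$$
I then iterate the second form of Lemma~\ref{lem:slrec} over $S_\ell=M_1+\dots+M_\ell$ at the fixed scale $\tau'$: each factor $\Lambda_{\tau'}^{(M_j)}(H^{j-1}(x,\bft))\ge\lambda_*$ by Lemma~\ref{lem:constantstau*lambda*} (as $M_j\ge E(H^{j-1}(x,\bft))$), giving geometric damping, while for each $i\le\ell$ the defining inequality of $\widehat m$ at $H^{i-1}(x,\bft)$ — whose $M_i$-th image is within $\eps^{2\gamma}$ of $\PP_0$, so the component of $f_\bft^{-M_i}(B_{\eps^\gamma}(\PP_0))$ through it contains $U^{(M_i)}_{\eps^\gamma/2}(H^{i-1}(x,\bft))$ — yields $\sL_{\eps^\gamma/2}^{(M_i)}(H^{i-1}(x,\bft))\le\eps^{-\gamma\alpha}$, hence $\sL_{\tau'}^{(M_i)}(H^{i-1}(x,\bft))\le\eps^{-\gamma\alpha}$ since $\tau'<\eps^\gamma/2$. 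Thus $\sL_{\tau'}^{(S_\ell)}(x,\bft)\le\frac{\lambda_*^\alpha}{\lambda_*^\alpha-1}\eps^{-\gamma\alpha}$, so $\sL^{(T)}(x,\bft)\le K_\sharp+C\eps^{2\gamma\alpha}\eps^{-\gamma\alpha}=K_\sharp+C\eps^{\gamma\alpha}\le K_\sharp+1=K_@$ for $\eps$ small.

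For (ii), write $N=n\eps^{-\alpha}\varphi(\eps)$ and split according to whether $\ell\ge\lceil\sqrt n\rceil$:
$$\mathbb{P}_\eps(T>N)\le\mathbb{P}_\eps(\ell\ge\lceil\sqrt n\rceil)+\mathbb{P}_\eps\Big(\ell<\sqrt n,\ \textstyle\sum_{i=1}^{\ell+1}M_i>N\Big).$$
The first term is $\le Ce^{-\rho\sqrt n}$ by Lemma~\ref{lem:Mnest}(1). For the second, if $\ell<\sqrt n$ and the sum of the $\le\lceil\sqrt n\rceil+1$ terms exceeds $N$, then $M_i\ge\tfrac12\sqrt n\,\eps^{-\alpha}\varphi(\eps)$ for some $i$ (for $n$ large), so it is at most $\sum_{i=1}^{\lceil\sqrt n\rceil+1}\mathbb{P}_\eps(\ell\ge i-1,\ M_i(x,\bft)\ge\tfrac12\sqrt n\,\eps^{-\alpha}\varphi(\eps))$. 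For $i\le\ell$ each term is $\le\mathbb{P}_\eps(\{(x,\bft)\in E_i:M_i(x,\bft)\ge\tfrac12\sqrt n\,\eps^{-\alpha}\varphi(\eps)\})\le Ce^{-\rho\sqrt n/2}$ by Lemma~\ref{lem:Mnest}(2); for $i=\ell+1$, where $M_{\ell+1}(x,\bft)=M(H^\ell(x,\bft))$ on $\{\ell=i-1\}\subset E_{i-1}$, the same Fubini-plus-distortion reduction as in the proof of Lemma~\ref{lem:Mnest}(2) — using the bounded distortion of $f_\bft^{s_\ell}$ on the component of $f_\bft^{-s_\ell}(B_{\eps^\gamma}(\PP_0))$ through $x$, a consequence of the $\widehat m$-control of the $\ell$ intervening steps — combined with Corollary~\ref{cor:Mtautailest}, again gives $\le Ce^{-\rho\sqrt n/2}$. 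Summing over $i$ costs a polynomial factor $\lceil\sqrt n\rceil+1$, absorbed by decreasing $\rho$. Hence $\mathbb{P}_\eps(m_{K_@}(x,\bft)>N)\le\mathbb{P}_\eps(T>N)\le Ce^{-\rho\sqrt n}$, taking $C$ large to cover small $n$.

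The main obstacle is the scale matching in the second paragraph: the estimate delivered by a $\widehat m$-step is a distortion sum on the scale-$\eps^\gamma$ preimage of $B_{\eps^\gamma}(\PP_0)$, a \emph{larger} neighbourhood than the ball $B_{\tau_*}$ underlying $\sL=\sL_{\tau_*}$, so it cannot be fed directly into the recursion. The resolution is that once the terminal $m_{K_\sharp}$-step is split off, its expansion $\Lambda\gtrsim\eps^{-2\gamma}$ — which itself hinges on $\kappa>2\gamma$, built into the choice of $\gamma,\kappa$ — re-measures the initial block at the small scale $\tau'=\Lambda^{-1}\tau_*\lesssim\eps^{2\gamma}\ll\eps^\gamma$, where every $\widehat m$-bound is applicable and each contributes only $\eps^{-\gamma\alpha}$, comfortably beaten by the prefactor $\Lambda^{-\alpha}\lesssim\eps^{2\gamma\alpha}$. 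Everything else — the geometric summation through $\lambda_*$, the "many steps versus one long step" dichotomy, and the measure estimates — is routine given Lemmas~\ref{lem:slrec}, \ref{lem:constantstau*lambda*} and~\ref{lem:Mnest}.
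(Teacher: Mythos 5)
Your proof follows the same overall strategy as the paper's: decompose according to the number $\ell$ of consecutive $\widehat m$-steps via $E_\ell\setminus E_{\ell+1}$, bound $m_{K_@}$ by a sum of the $M_i$'s, and then trade ``many induced steps'' ($\ell\ge\sqrt n$, via Lemma~\ref{lem:Mnest}(1)) against ``one long step'' (some $M_i\gtrsim\sqrt n\,\eps^{-\alpha}\varphi(\eps)$, via Lemma~\ref{lem:Mnest}(2) and Corollary~\ref{cor:Mtautailest}). Where you genuinely diverge is the combinatorial claim. The paper asserts, without justification, that for $(x,\bft)\in E_k\setminus E_{k+1}$ the sum $M_1+\cdots+M_k$ is already a $K_@$-expansion time. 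As written this is doubtful: since $(x,\bft)\in E_1$ forces $\widehat m<m_{K_\sharp}$, the last block alone contributes $\sL^{(M_k)}(H^{k-1}(x,\bft))=\sL_{\tau_*}^{(\widehat m)}(H^{k-1}(x,\bft))>K_\sharp$, and the $\widehat m$-construction only controls distortion at scale $\eps^\gamma$, so there is no evident uniform upper bound on $\sL_{\tau_*}^{(M_1+\cdots+M_k)}$. Your version $m_{K_@}\le T:=\sum_{i=1}^{\ell+1}M_i$, obtained by appending the terminal $m_{K_\sharp}$-step, is the correct repair: the terminal step's expansion $\Lambda\gtrsim\eps^{-2\gamma}$ (the place where $\kappa>2\gamma$, i.e.\ the numerical choices of $\gamma,\kappa$, is used) re-measures the earlier $\widehat m$-blocks at scale $\tau'\lesssim\eps^{2\gamma}<\eps^\gamma$, where the $\widehat m$-bound $\le\eps^{-\gamma\alpha}$ applies, and the prefactor $\Lambda^{-\alpha}\lesssim\eps^{2\gamma\alpha}$ renders their contribution negligible. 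The cost is that your ``one long step'' case must also cover the extra index $i=\ell+1$, which the paper's cruder claim lets it skip; your treatment there (re-running the Fubini-plus-distortion reduction of Lemma~\ref{lem:Mnest}(2) with Corollary~\ref{cor:Mtautailest} supplying the tail of $M$) is the right idea but is only sketched, and one should verify that the set $\{\ell=i-1\}$ is still covered by intervals on which $f_\bft^{s_\ell}$ has the required bounded distortion at scale $\eps^{2\gamma}$ before invoking Fubini. Overall yours is a more careful -- and I think more nearly correct -- rendering of the argument the paper intends.
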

\begin{proof} Note that if $(x,\bft)\in E_k\setminus E_{k+1}$, then $(x,\bft),\dots,H^{k-1}(x,\bft)\in E_1$ and
$H^k(x,\bft)\notin E_1$
and it follows that  $M_1(x,\bft)+M_2(x,\bft)+\cdots+ M_k(x,\bft)$ is a
$K_@$-expansion time of $(x,\bft)$, where $K_@$ is a constant. So
if $m_{K_@} (x,\bft) > n\eps^{-\alpha} \varphi(\eps)$, then either $M_1(x,\bft)> n\eps^{-\alpha}\varphi(\eps)$, or $(x,t)\in E_k$ for some $k\ge \sqrt{n}$, or
there exists $1\le k<\sqrt{n}$ such that $M_k(x,\bft)\ge \sqrt{n} \eps^{-\alpha}\varphi(\eps)$. Using Corollary~\ref{cor:Mtautailest} for the first case and Lemma~\ref{lem:Mnest} for the remaining two cases, we obtain the desired estimate.
\end{proof}

\begin{proof}[Proof of Theorem~\ref{thm:tail-estimate}]
Fix $\tilde\alpha\in (1, \alpha)$. Take $\beta=\sqrt{\alpha \tilde{\alpha}}$ and choose $\kappa\in (0,1)$ such that
$\kappa^2 \tilde \alpha >\alpha$. Let $K_0=\max (K_*, K_@)$ and write $A_m=\mathbb{P}_\eps(\{(x,\bft)\in I\times \Omega^\N_\eps; m_{K_0}(x,\bft)\ge m\})$.
If $m\le \eps^{-\kappa\alpha}$, then by Corollary~\ref{cor:tailmsmall}, we have $A_m\lesssim m^{-1/\alpha}$.
If $\eps^{-\kappa \alpha}\le m\le \eps^{-\beta}$, then
$A_m\lesssim \eps^{\kappa}\le m^{-\kappa/\beta}\le m^{-1/\tilde \alpha}$ (where the first inequality follows from
Proposition~\ref{prop:escape} and from $m\ge \eps^{-\kappa\alpha}$).
If $m>\eps^{-\beta}$ then
let $n$ be the maximal integer so that
$m\ge n\eps^{-\alpha}\varphi(\eps)$. So $n\asymp m\eps^{\alpha}/\varphi(\eps)$.
Since $m>\eps^{-\beta}$ and by Proposition~\ref{prop:tailestmlarge}, we therefore have
$A_m \lesssim e^{-\rho \sqrt{n}}\lesssim m^{-1/\alpha}$, provided that $\eps>0$ is small enough.
\end{proof}


\section{Strong stochastic stability}
\label{sec:final}

\subsection{Nice sets}
An interval $V$ is a {\em nice set} for the deterministic system $f\colon \T\to \T$
if $f^n(x)\notin V$ for each $x\in \partial V$ and all $n>0$. This notion has played
a crucial role in the setting of one-dimensional dynamical systems, because
it implies that if $x\in V$ and $n>0$ is minimal so that $f^n(x)\in V$,
then the component of $f^{-n}(V)$ containing $x$ is contained in $V$.
This means that each component of the domain of the first return map $\mathcal{R}_V$ to $V$
is contained in $V$ and that its boundary points are mapped into boundary points of $V$.

In the deterministic case it is often helpful to construct nice sets around  special points, such
as critical points or parabolic periodic points.
In our case we will construct nice sets around the fixed points.
Rivera-Letelier \cite{R} realised that it very advantageous to have the property
which in our setting states that there exists $K>1$ so that
for  each $\delta>0$ small there exists a nice interval $V$ with
$B_\delta(p_0)\subset V\subset B_{K\delta}(p_0)$.  This property was also crucially used in
\cite{BRSS}. In the case of random dynamical systems, nice sets and the analogous property
were also used and  established in \cite{Shen}. It implies that one can induce to a
 Markov setting while having extension {\lq}space{\rq}.
In our setting nice sets are defined as follows:

\begin{definition}
A {\em nice set for $\eps$-perturbations} is a measurable subset $\textbf{V}$ of $\T\times \Ome$ with the following properties:
\begin{enumerate}
\item for each each $\bft\in \Ome$, $\textbf{V}^\bft$ is an open set containing $\PP_0$ such that each component of $\textbf{V}^\bft$ intersects $\PP_0$ exactly at one point;
\item for each $\bft\in \Ome$, each $x\in \partial \textbf{V}^\bft$ and each $n\ge 1$, we have
$$f_\bft^n(x)\not\in \textbf{V}^{\sigma^n\bft}.$$
\end{enumerate}
\end{definition}

\begin{lemma}\label{lem:niceset}
There exists $\delta_0>0$ such that for each $\delta\in (0,\delta_0)$,  there exists a nice set $\textbf{V}$ for $\eps$-perturbations such that for each $\bft\in \Ome$ and each $p_0\in \PP_0$, we have
$$B_\delta(p_0)\subset \textbf{V}^\bft(p_0)\subset B_{2\delta}(p_0),$$
provided that $\eps>0$ is small enough.
\end{lemma}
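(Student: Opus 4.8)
The plan is to adapt the construction of random nice sets from \cite{Shen} to the present situation. It is enough to produce, for each $p_0\in\PP_0$, the component $\textbf{V}^\bft(p_0)$ of the nice set, and then let $\textbf{V}$ be the disjoint union of these over $\PP_0$; so I fix $p_0$ and write $p_0=0$ (the other components enter only through the deletion rule below, where one also deletes a point whose orbit re-enters another component, and are treated identically). I will obtain $\textbf{V}$ as a decreasing intersection $\textbf{V}=\bigcap_{k\ge0}\textbf{V}^{(k)}$. Put $\textbf{V}^{(0),\bft}(0)=B_{2\delta}(0)$ for every $\bft$; having defined $\textbf{V}^{(k)}$, let $\textbf{V}^{(k+1),\bft}(0)$ be the connected component of $0$ in the open set obtained from $\textbf{V}^{(k),\bft}(0)$ by deleting the closure of
\[
\bigl\{x\in\textbf{V}^{(k),\bft}(0)\setminus B_\delta(0):\ \exists\,1\le j<n\le k+1,\ f_\bft^j(x)\notin\overline{\textbf{V}^{(k),\sigma^j\bft}(0)}\ \text{and}\ f_\bft^n(x)\in\textbf{V}^{(k),\sigma^n\bft}\bigr\},
\]
that is, of those points which, having genuinely left the $0$-component of $\textbf{V}^{(k)}$, re-enter $\textbf{V}^{(k)}$ within $k+1$ steps. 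Every condition here is Borel in $(x,\bft)$, so each $\textbf{V}^{(k)}$, and hence $\textbf{V}$, is measurable with open-interval fibres containing $0$; equivalently, the two endpoints of $\textbf{V}^\bft(0)$ are the pointwise limits as $k\to\infty$ of the endpoints of $\textbf{V}^{(k),\bft}(0)$, so they depend measurably on $\bft$.

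The scale bound $\textbf{V}^\bft(0)\subseteq B_{2\delta}(0)$ is built into the construction, and since the set deleted at each stage is disjoint from $B_\delta(0)$ we get $B_\delta(0)\subseteq\textbf{V}^{(k),\bft}(0)$ for all $k$, hence $B_\delta(0)\subseteq\textbf{V}^\bft(0)$ (using that $B_\delta(0)$ is connected and contains $0$). The real content is the niceness condition. Let $x\in\partial\textbf{V}^\bft(0)$, so $x\in B_{2\delta}(0)\setminus B_\delta(0)$. The key local fact, valid near every $p_0\in\PP_0$ by the assumptions defining $\sE_\alpha$, is the outward drift $\dist(f(y),p_0)\ge\dist(y,p_0)+c\,\dist(y,p_0)^{1+\alpha}$ for $\dist(y,p_0)$ small; taking $\eps$ small compared to the now-fixed scale $\delta$, this drift dominates the additive noise at distances $\ge\delta$, so the $f_\bft$-orbit of $x$ moves monotonically away from $0$ and leaves $B_{2\delta}(0)$ after a number of steps $m_0$ which, by the computation in Lemma~\ref{lem:bindescape}, is at most $C\delta^{-\alpha}$ and in particular depends only on $\delta$.

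It then remains to show $f_\bft^n(x)\notin\textbf{V}^{\sigma^n\bft}$ for all $n\ge1$. For $n>m_0$ the orbit of $x$ has genuinely left every $\textbf{V}^{(k)}$ before time $n$, so if $f_\bft^n(x)\in\textbf{V}^{\sigma^n\bft}\subseteq\textbf{V}^{(k),\sigma^n\bft}$ for such an $n$, then $x$ would have been deleted at stage $k=\max(n-1,m_0)$, a contradiction. For $1\le n\le m_0$ the orbit is still inside $B_{2\delta}(0)$, on the same side of $0$ as $x$ and moving away from $0$, so $f_\bft^n(x)\notin\textbf{V}^{\sigma^n\bft}(0)$ reduces to the assertion that $f_\bft^n(x)$ is at least as far from $0$ as the corresponding endpoint of $\textbf{V}^{\sigma^n\bft}(0)$. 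This is a one-step monotonicity of the endpoints of $\textbf{V}$: a boundary point mapped by $f_\bft$ onto a boundary point of $\textbf{V}^{\sigma\bft}$ inherits niceness from $\textbf{V}$ at $\sigma\bft$, so the construction cannot produce an endpoint of $\textbf{V}^\bft(0)$ lying strictly inside the $f_\bft$-preimage through $0$ of the endpoint at $\sigma\bft$; this one-step estimate then propagates along the escape orbit because each perturbed map $f_t$, $|t|\le\eps$, is monotone on $B_{2\delta}(0)$.

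The main obstacle is precisely this scale control near the neutral fixed point. Since the first-return map to $B_{2\delta}(0)$ is only weakly expanding, with arbitrarily long return times near $0$, one cannot appeal to hyperbolicity either to see that the deletion process halts before it reaches $B_\delta(0)$ or to bound the escape time $m_0$; both are supplied instead by the quantitative escape estimates of Section~\ref{sec:escape} (Proposition~\ref{prop:escape} and Lemma~\ref{lem:bindescape}), and this is exactly what forces $\eps$ to be taken small relative to the fixed scale $\delta$, as in the statement. By contrast, the non-differentiability of $f_\bft^n$ at finitely many points is harmless here, since the whole argument is set-theoretic and uses only that each $f_\bft$ is an orientation-preserving covering map with the above drift away from $\PP_0$.
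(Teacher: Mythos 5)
Your construction (iteratively deleting points whose orbit leaves and re-enters, then taking the nested intersection) is the standard route to random nice sets, and you have correctly identified the escape estimates of Section~\ref{sec:escape} as the relevant quantitative input. But the core of the lemma --- the one short estimate the paper's proof actually carries out --- is the ingredient you assert but never prove. By artificially restricting the deletion rule to $\textbf{V}^{(k),\bft}(0)\setminus B_\delta(0)$, you make the inclusion $B_\delta(p_0)\subset \textbf{V}^\bft(p_0)$ automatic, but you merely displace the difficulty: if the deletion process ``wants'' to eat into $B_\delta(0)$, the right endpoint of $\textbf{V}^\bft(0)$ gets pinned at $\delta$, and then nothing in your argument forbids the boundary point $\delta$ from having a re-entering orbit. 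Your step (c) relies on the deduction ``if $f_\bft^n(x)\in\textbf{V}^{\sigma^n\bft}$ then $x$ would have been deleted, a contradiction,'' but when $x=\delta$ the deletion of a one-sided neighbourhood $(\delta,\delta+\eta)\subset D_k$ leaves the right endpoint at $\delta$, so there is no contradiction and niceness fails. What rules this scenario out --- and is what the paper's proof reduces the whole lemma to --- is the backward-contraction estimate: if $\delta\le\dist(x,\PP_0)<2\delta$ and $f_\bft^m(x)\in B_{2\delta}(\PP_0)$, the component $W$ of $f_\bft^{-m}(B_{2\delta}(\PP_0))$ containing $x$ is either contained in $B_{2\delta}(\PP_0)$ or disjoint from $\PP_0$. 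The paper proves this by taking $y\in W$ with $\dist(f_\bft^m(y),\PP_0)<\dist(y,\PP_0)\le 2\delta$, deducing $m\ge E(y,\bft)$, and invoking Proposition~\ref{prop:escape}(1) to get $\Lambda^{(m)}(y,\bft)\ge\Lambda^{(E(y,\bft))}(y,\bft)\ge(2K_*\delta)^{-1}$, whence $|W|<\delta$, too small to connect $\PP_0$ to the annulus. Gesturing at Proposition~\ref{prop:escape} and Lemma~\ref{lem:bindescape} as ``supplying'' this control does not substitute for actually writing the expansion estimate that prevents the pullback from reaching $\PP_0$.

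There are two smaller but still real gaps. The ``one-step monotonicity of endpoints'' $f_{t_0}(b^\bft)\ge b^{\sigma\bft}$ is asserted, not proved; it can be established by induction on the stage $k$ by pushing the deletion condition forward under $f_{t_0}$ (separating the cases $j=1$ and $j\ge 2$ in your criterion and using that $f_{t_0}$ is a monotone homeomorphism onto its image), but that argument must actually be given, as it is what your whole step (d) rests on. In step (c) you also need $f_\bft^{m_0}(x)\notin\overline{\textbf{V}^{(k),\sigma^{m_0}\bft}(0)}$, while the escape time only guarantees $f_\bft^{m_0}(x)\notin B_{2\delta}(0)$; when $f_\bft^{m_0}(x)$ lands on $\partial B_{2\delta}(0)$ and $\textbf{V}^{(k),\sigma^{m_0}\bft}(0)=B_{2\delta}(0)$ your deletion criterion does not fire. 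These edge cases can be handled, but together with the missing backward-contraction estimate they leave the argument substantially incomplete relative to the paper's short reduction-plus-citation proof.
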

\begin{proof}
The proof is similar to the one given in \cite[Proposition 5.8]{Shen}. It suffices to show for small $\delta>0$, the following holds provided that $\eps>0$ is small enough: if $\delta\le \dist(x, \PP_0)<2\delta$, $\bft\in\Omega_\eps^\N$ and $f_\bft^m(x)\in B_{2\delta} (\PP_0)$, then the component $W$ of $f_\bft^{-m}(B_{2\delta}(\PP_0))$ which contains $x$ satisfies either $W\subset B_{2\delta}(\PP_0)$ or $W\cap \PP_0=\emptyset$. Indeed, otherwise, there would exist $y\in W$ such that
$\dist(f_\bft^m(y),\PP_0)<\dist(y,\PP_0)\le 2\delta$, so $m\ge E(y,\bft)=:E$. However, by Proposition~\ref{prop:escape}, $\Lambda^{(m)}(y,\bft)\ge \Lambda^{(E)}(y,\bft)\ge (2K_*\delta)^{-1}>4$ (provided that $\delta>0$ is small enough). This would imply that $|Df_\bft^m(z^+)|>4$ for all $z\in W$, hence $|W|< \delta$, a contradiction.
\end{proof}

\subsection{Inducing to the nice set}

The following theorem allows us to consider an induce transformation
to the nice set $\textbf{V}$.

\begin{theorem}\label{thm:tail-estimate-nice}
Fix $\tilde{\alpha}\in (1,\alpha)$. For each $\delta>0$ small there exist constants $K$, $C$ and $\eps_0>0$
so that for each $\eps\in (0,\eps_0)$ there exists a nice set $V$ for $\eps$-perturbations
so that
\begin{enumerate}
\item $B_\delta(p_0)\times \Ome \subset \mathbf{V} \subset B_{2\delta}(p_0)\times \Ome$
for each $x\in \mathbf{V}^\bft$;
\item for $\mathbb{P}_\eps$-almost each $(x,\bft)\in \textbf{V}$ there exists a positive integer $m=m_{\mathbf{V}}<\infty$
such that
$$f_\bft^m(x)\in \textbf{V}^{\sigma^m\bft}, \,\, \sL^{(m)}(x, \bft)\le K, \mbox{ and }\Lambda^{(m)}(x,\bft)\ge \lambda_*;$$
\item $\mathbb{P}_\eps(\{(x,\bft)\in \mathbf{V}; m_{\mathbf{V}}(x,\bft)>m\})\le C_0 m^{-1/\tilde{\alpha}}$.
\end{enumerate}
\end{theorem}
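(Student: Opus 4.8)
The plan is to induce to $\mathbf{V}$ by composing the large‑scale return scheme behind Theorem~\ref{thm:tail-estimate} with a uniformly bounded number of further iterates that bring the orbit back into $\mathbf{V}$, and then to transfer the tail estimate through a geometric‑series argument of the type used in Lemma~\ref{lem:Mnest}. I would first fix $\delta>0$ small and take $\mathbf{V}$ to be a nice set for $\eps$‑perturbations as in Lemma~\ref{lem:niceset}, so $B_\delta(\PP_0)\times\Ome\subset\mathbf{V}\subset B_{2\delta}(\PP_0)\times\Ome$; after shrinking $\delta$ we may also assume $\mathbf{V}^\bft\subset\hI$ for every $\bft$. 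This gives (1). I would then \emph{define} $m_{\mathbf{V}}(x,\bft)$, for $(x,\bft)\in\mathbf{V}$, to be the least $m\ge 1$ with $f_\bft^m(x)\in\mathbf{V}^{\sigma^m\bft}$, $\sL^{(m)}(x,\bft)\le K$ and $\Lambda^{(m)}(x,\bft)\ge\lambda_*$, where $K$ is the constant chosen below. With this definition (2) holds automatically once $m_{\mathbf{V}}<\infty$ $\mathbb{P}_\eps$‑a.e., and for (3) it suffices to exhibit \emph{some} time enjoying the three properties above and obeying the asserted tail bound, since $m_{\mathbf{V}}$ is no larger.

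To exhibit one, given $(x,\bft)\in\mathbf{V}$ I would run the expansion‑time machinery to $m_1:=m_{K_0}(x,\bft)$, with $K_0=K_0(\tilde\alpha)$ from Theorem~\ref{thm:tail-estimate}; then $\mathbb{P}_\eps(m_1\ge m)\le C_0 m^{-1/\tilde\alpha}$, and $f_\bft^{m_1}$ carries $U^{(m_1)}(x,\bft)$ diffeomorphically onto $B_{\tau_*}(f_\bft^{m_1}(x))$ with $\sL^{(m_1)}(x,\bft)\le K_0$ (hence bounded distortion) and, since $m_1\ge E(x,\bft)$, with $\Lambda^{(m_1)}(x,\bft)\ge\lambda_*$. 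This is a ``free return to a definite scale''. Because the perturbed maps are topologically expanding, there is an integer $N_*$, depending only on $f$ and $\tau_*$ (not on $\delta$ or $\eps$), such that the first $N_*$ iterates of any $\tau_*$‑ball together cover $\T$. Combining this with the bounded distortion at time $m_1$, the $\mathbb{P}_\eps$‑measure of the set $G$ of $(x,\bft)\in\mathbf{V}$ for which $f_\bft^{m_1+\ell}(x)\in B_\delta(\PP_0)$ for some $0\le\ell\le N_*$ is $\gtrsim\delta\,\mathbb{P}_\eps(\mathbf{V})$. For $(x,\bft)\in G$, with $m_1+\ell$ the first such time, the $\sL$‑sum over the last $\ell$ steps is at most $N_*$ (all derivatives are $\ge 1$), so Lemma~\ref{lem:slrec} gives $\sL^{(m_1+\ell)}(x,\bft)\le N_*+K_0$ and $\Lambda^{(m_1+\ell)}(x,\bft)\ge\lambda_*$; so, setting $K:=K_0+N_*$, the time $m_1+\ell$ is a good return.

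If $(x,\bft)\notin G$, one follows the orbit back into $\hI$ and takes a further free return $m_{K_0}$, repeating; the number $n$ of free returns needed before the orbit lands in $G$ is geometrically tailed. This is a Lemma~\ref{lem:Mnest}‑type statement: the bounded distortion of each free return makes the image of normalized Lebesgue measure comparable to normalized Lebesgue measure on the relevant $\tau_*$‑balls, the nice‑set property keeps every pullback of $\mathbf{V}$ inside $\mathbf{V}$ so that the distortion estimates compose (the recursion of Lemma~\ref{lem:slrec} telescoping to a bound on $\sL^{(m_{\mathbf{V}})}$ independent of $n$, thanks to $\Lambda\ge\lambda_*$ at each stage), and a Fubini argument of the shape used in Lemma~\ref{lem:Mnest} gives $\mathbb{P}_\eps(n\ge k)\le C(1-c\delta)^k$. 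Hence $m_{\mathbf{V}}\le R_1+\dots+R_n+N_*$, where the $R_i$ are the successive free‑return‑plus‑reentry times --- each with tail $\lesssim m^{-1/\tilde\alpha}$, and each independent of the earlier coordinates of $\bft$ by the product structure of $\theta_\eps^\N$ --- and $n$ is geometric. Carrying the argument out with an auxiliary exponent $\tilde\alpha'\in(\alpha,\tilde\alpha)$ in place of $\tilde\alpha$, and then bounding the geometric sum by a standard one‑big‑jump estimate for regularly varying summands, yields $\mathbb{P}_\eps(m_{\mathbf{V}}\ge m)\le C_0 m^{-1/\tilde\alpha}$, which is (3); a.e.\ finiteness of $m_{\mathbf{V}}$, and with it (2), follows.

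The step I expect to cause the most trouble is the one recurring throughout the paper: controlling the expansion and distortion at the \emph{fixed} scale $\tau_*$ along the chain of free returns. The neighbourhoods $U^{(\cdot)}_{\tau_*}$ do not nest along an orbit --- a $\tau_*$‑neighbourhood at a later time need not sit inside the $\tau_*$‑neighbourhood at an earlier one --- so the crucial point is to establish $\Lambda^{(\cdot)}_{\tau_*}\ge\lambda_*$ at each free return and at the final good return, since that lower bound is precisely what makes the recursion of Lemma~\ref{lem:slrec} telescope to a stage‑independent bound on $\sL$; this requires a short argument in the spirit of Lemma~\ref{lem:constantstau*lambda*} together with the escape estimates of Section~\ref{sec:escape}, with the rare case in which the orbit lingers near $\PP_0$ for longer than $\eps^{-\kappa\alpha}$ steps needing separate (and exponentially small) treatment. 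A second, milder point is making the distortion‑against‑Lebesgue comparison in the geometric count precise, i.e.\ adapting the estimate around \eqref{eqn:distortionWn} in the proof of Lemma~\ref{lem:Mnest} to the present setting, which should go through essentially verbatim.
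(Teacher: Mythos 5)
Your proposal takes a genuinely different route from the paper, and in doing so it misses a key simplification that makes the paper's argument one short step rather than a recursive scheme. The paper \emph{defines} $m_{\mathbf V}(x,\bft)=m_{K_0}(x,\bft)+S(x,\bft)$, where $S$ is the first time the orbit of $F^{m_{K_0}(x,\bft)}(x,\bft)$ enters $\mathbf V$. The crucial observation is that, because $\mathbf V\supset B_\delta(\PP_0)\times\Ome$, the orbit is outside $B_\delta(\PP_0)$ until time $S$, and $f$ is uniformly expanding there; hence $\mathbb P_\eps(S\ge s)\le C_1e^{-2\rho s}$ \emph{uniformly over all starting points}. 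No iteration of $m_{K_0}$ is needed: one reach to scale $\tau_*$ plus one waiting time suffices. The distortion control along the waiting time is exactly Lemma~\ref{lem:distgeneral} (the intermediate essential returns lie outside $I_{\Delta_*\eps}\subset B_\delta(\PP_0)\subset\mathbf V$, so each contributes a uniformly bounded $\sL$ which telescopes geometrically via Lemma~\ref{lem:slrec}), and the tail bound in (3) is obtained by splitting $\{m_{\mathbf V}>m\}$ according to whether $m_{K_0}$ or $S$ dominates, using Theorem~\ref{thm:tail-estimate} in the first case and, in the second, the bound $\mathbb P_\eps(\{m_{K_0}=k,\ S=s\})\le d^k\,C_1e^{-2\rho s}\le C_1e^{-\rho s}$ (with $d$ the mapping degree and $k\le\rho(\log d)^{-1}s$) together with $S\gtrsim m$.

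Your scheme --- iterate $m_{K_0}$ with a geometric count $n$ of ``failed'' returns, then apply a one-big-jump estimate --- is plausible in spirit but has concrete gaps. First, the claim that the successive times $R_i$ are ``each independent of the earlier coordinates of $\bft$ by the product structure of $\theta_\eps^\N$'' is false: $R_i$ depends on the state of the orbit after the $(i-1)$st stage, which depends on all earlier noise. What one can hope for is a conditional/Fubini argument (as in Lemma~\ref{lem:R-estimate} or Lemma~\ref{lem:Mnest}), but then one must also justify that Theorem~\ref{thm:tail-estimate} --- which is a statement about $\mathbb P_\eps=\Leb\times\theta_\eps^\N$ --- can be applied to the pushforward of $\mathbb P_\eps|\mathbf V$ under the (random, state-dependent) time $m_{K_0}$; that requires a nontrivial distortion-against-Lebesgue comparison that you gesture at but do not set up. Second, the lower bound $\mathbb P_\eps(G)\gtrsim\delta\,\mathbb P_\eps(\mathbf V)$ from the $N_*$-covering argument needs the same distortion accounting to be made precise. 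Third, the one-big-jump bound carries the factor $\mathbb E[n]\sim\delta^{-1}$, which you correctly propose to absorb by an auxiliary exponent $\tilde\alpha'\in(\alpha,\tilde\alpha)$ at the cost of a $\delta$-dependent constant; this can work, but it reproduces, with considerably more effort, what the paper gets for free from the uniform exponential tail on $S$. In short: the underlying intuition is sound, but the iteration is superfluous, and its rigorous execution would reconstruct (and partly duplicate) the machinery of Lemmas~\ref{lem:R-estimate} and~\ref{lem:Mnest} rather than invoking the simpler two-stage time the paper uses.
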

\begin{proof}
Let $K_0$ and $C_0$ be given by Theorem~\ref{thm:tail-estimate} and let $\delta_0$ be given by Lemma~\ref{lem:niceset}. Fix a constant $\delta$ such that $0<\delta\in \min (\delta_0, \tau_*/4)$ and assume $\eps>0$ is small. So there exists a nice set $\mathbf{V}$ for $\eps$-perturbations such that $B_\delta(\PP_0)\subset \mathbf{V}^\bft\subset B_{2\delta}(\PP_0)$ holds for all $\bft\in \Omega_\eps^\N$.
For each $(x,\bft)$, let $s(x,\bft)=\inf\{s\in \N: F^s(x,\bft)\in \mathbf{V}\}$.  Since $f$ is uniformly expanding outside $B_{\delta}(\PP_0)$ there exist $C_1>0$ and $\rho>0$ such that
$$\mathbb{P}_\eps\left(\{s(x,\bft)\ge s\}\right)\le C_1 e^{-2\rho s}.$$
Note that $s(x,\bft)$ is an essential return time of $(x,\bft)$ into $I$ and all previous essential returns lie outside $I_{\Delta_* \eps}$, so by Lemma~\ref{lem:distgeneral} and Lemma~\ref{lem:slrec}, we obtain that
$\sL^{s(x,\bft)}(x,\bft)$ is bounded from above by a constant.
Define $$m_{\mathbf{V}}(x,\bft)= m_{K_0}(x,\bft) + S(x,\bft), \mbox{ where } S(x,\bft)=s(F^{m_{K_0}(x,\bft)}(x,\bft)).$$
Then by Lemma~\ref{lem:slrec} again, we obtain that $\sL^{(m_{\mathbf{V}(x,\bft)})}(x,\bft)$ is bounded from above by a constant. Thus the second item of the theorem holds for some $K$.

It remains to prove the third item. Let $d$ be the mapping degree of $f:\T\to \T$. So $f_\bft^n: \T\to \T$ is a $d^n$-to-$1$ covering map. Write
$$X_m=\{(x,\bft)\in \mathbf{V}: m_{\mathbf{V}}(x,\bft)>m, m_{K_0}(x,\bft)> \rho (\log d)^{-1} S(x,\bft))\},$$
and $$Y_m=\{(x,\bft)\in\mathbf{V}:  m_{\mathbf{V}}(x,\bft)>m, m_{K_0}(x,\bft)\le \rho (\log d)^{-1} S(x,\bft))\}.$$
For $(x,\bft)\in X_m$, $m_{K_0}(x,\bft)\gtrsim m$, hence by Theorem~\ref{thm:tail-estimate} we obtain $\mathbb{P}_\eps(X_m)=O(m^{-1/\tilde{\alpha}})$. Given positive integers $k,s$ with $k\le \rho(\log d)^{-1}s$, we have
$$\mathbb{P}_\eps (\{(x,\bft)\in Y_m: m_{K_0}=k, S=s\})\le  d^k C_1 e^{-2\rho s}\le C_1 e^{-\rho s}.$$  For $(x,\bft)\in Y_m$,
$S(x,\bft)\ge \rho_1 m$, where $\rho_1>0$ is a constant, so
$$\mathbb{P}_\eps (Y_m)\le \sum_{s\ge \rho_1 m} C_2 s e^{-\rho s}=O(m^{-1/\tilde{\alpha}}).$$
Thus
$$\mathbb{P}_\eps(\{(x,\bft)\in V; m_{\mathbf{V}}(x,\bft)>m\})\le \mathbb{P}_\eps(X_m)+\mathbb{P}_\eps(Y_m)=O( m^{-1/\tilde{\alpha}}).$$
\end{proof}


To prove the first part of the Main Theorem we first state the following

\begin{definition}
A Borel set $E\subset \T$ is called {\em almost forward (resp. backward) invariant} for $\eps$-perturbations if
$f_\bft(E)\setminus E$ (resp. $f_\bft^{-1}(E)\setminus E$) has zero Lebesgue measure for a.e. $\bft\in \Ome$.
A stationary measure is called {\em ergodic} if for each Borel set $E$ which is almost
completely invariant, one has $\mu_\eps(E)=$ or $\mu_\eps(E)=1$.
\end{definition}

\begin{prop}
For each $\epsilon>0$, the random dynamical system has a unique stationary measure. This stationary measure is ergodic and absolutely continuous.
\end{prop}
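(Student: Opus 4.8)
The plan is to derive existence and absolute continuity of a stationary measure from the induced map of Theorem~\ref{thm:tail-estimate-nice}, and to obtain uniqueness and ergodicity by combining the mixing of that induced map with the fact that $\mathbb{P}_\eps$-almost every random orbit enters the nice set. We first note that \emph{every} stationary measure $\mu$ is automatically absolutely continuous: for bounded Borel $\phi$ one has $\int\phi\,d\mu=\int_\T\big(\int_{-\eps}^{\eps}\phi(f(x)+t)\,d\theta_\eps(t)\big)\,d\mu(x)$, so $\mu=\int_\T\nu_x\,d\mu(x)$ with $\nu_x$ the normalised Lebesgue measure on $B_\eps(f(x))$; since each $\nu_x\ll\Leb$ we get $\mu\ll\Leb$, with density at $y$ equal to $\tfrac1{2\eps}\mu(f^{-1}(B_\eps(y)))$, hence $\le\tfrac1{2\eps}$ everywhere and bounded below by a positive constant once $\supp\mu=\T$ (using lower semicontinuity of $y\mapsto\mu(f^{-1}(B_\eps(y)))$ on the circle). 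Thus it is enough to work with absolutely continuous stationary measures, and every such measure is easily seen to have full support: if its density vanished near some point, stationarity together with the topological exactness of $f$ would force it to vanish on an ever larger set, hence identically.

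For existence, let $\mathbf V$ and $m_{\mathbf{V}}$ be as in Theorem~\ref{thm:tail-estimate-nice}. The return map $\mathcal R\colon(x,\bft)\mapsto F^{m_{\mathbf{V}}(x,\bft)}(x,\bft)$ is defined $\mathbb{P}_\eps$-a.e.\ on $\mathbf V$, sends $\mathbf V$ into itself, is uniformly expanding ($\Lambda^{(m_{\mathbf{V}})}\ge\lambda_*>1$) with uniformly bounded distortion ($\sL^{(m_{\mathbf{V}})}\le K$), and, since $\mathbf V$ is a nice set, has each branch mapping onto a full component of $\mathbf V$. For such a map the associated noise-averaged transfer operator satisfies a Lasota--Yorke inequality and, as in \cite{Shen}, acts as a strict contraction in the Hilbert metric on a cone of densities of bounded logarithmic oscillation; hence $\mathcal R$ has a unique absolutely continuous invariant probability measure $\hat\nu_\eps$, with density comparable to $1$, and $\mathcal R$ is exponentially mixing. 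Since $1/\tilde\alpha>1$, part (3) of Theorem~\ref{thm:tail-estimate-nice} yields $Z:=\int_{\mathbf V}m_{\mathbf{V}}\,d\hat\nu_\eps<\infty$, so the saturation $\tilde\mu_\eps=Z^{-1}\sum_{j\ge0}(F^j)_*\big(\hat\nu_\eps|_{\{m_{\mathbf{V}}>j\}}\big)$ is a finite $F$-invariant measure; with reference measures chosen as in \cite{Shen} its $\Ome$-marginal equals $\theta_\eps^\N$, and its projection $\mu_\eps$ to $\T$ is a stationary measure, absolutely continuous (by the previous step, or directly as a countable sum of push-forwards of $\hat\nu_\eps$) and of full support.

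For uniqueness, note that the expansion of $f$ outside $B_\delta(\PP_0)$ gives $s(x,\bft)<\infty$ $\mathbb{P}_\eps$-a.e., so by part (2) of Theorem~\ref{thm:tail-estimate-nice} the set $\mathbf V$ is a sweep-out set: $\mathbb{P}_\eps$-a.e.\ $F$-orbit enters $\mathbf V$ and thereafter returns to it infinitely often. Hence if $\mu$ is any absolutely continuous stationary measure, then $\mu\times\theta_\eps^\N$ is a finite $F$-invariant measure charging $\mathbf V$, its normalised restriction to $\mathbf V$ is an absolutely continuous $\mathcal R$-invariant measure, hence equals $\hat\nu_\eps$, and by the Kac formula $\mu\times\theta_\eps^\N=\tilde\mu_\eps$, i.e.\ $\mu=\mu_\eps$. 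Together with the first step this proves uniqueness. Ergodicity is then immediate: if a Borel set $E$ were almost invariant with $0<\mu_\eps(E)<1$, then, using almost backward invariance, $\mu_\eps(\cdot\cap E)/\mu_\eps(E)$ and $\mu_\eps(\cdot\cap E^c)/\mu_\eps(E^c)$ would be two distinct stationary probability measures, contradicting uniqueness; equivalently, the exponential mixing of $\mathcal R$ lifts to $F$-ergodicity of $\mu_\eps\times\theta_\eps^\N$.

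The one genuinely non-trivial ingredient, and the expected main obstacle, is the contraction/quasicompactness of the transfer operator of $\mathcal R$ — that is, uniqueness and mixing of the induced map — which has to accommodate the points of $\PP_*$ where $f_\bft^{m_{\mathbf{V}}}$ fails to be $C^{1+\alpha}$: the distortion must be controlled through the quantity $\sL^{(m_{\mathbf{V}})}\le K$ supplied by Theorem~\ref{thm:tail-estimate-nice} rather than by a naive H\"older bound, and one needs the measure-theoretic bookkeeping that turns a stationary density for $\mathcal R$ into an honest stationary measure on $\T$ with $\Ome$-marginal $\theta_\eps^\N$. Both follow the template of \cite{Shen}, and the remaining steps are soft.
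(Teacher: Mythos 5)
Your first paragraph (any stationary measure is automatically absolutely continuous with density bounded by $1/(2\eps)$, hence the question reduces to a.c.\ stationary measures) is correct and is in fact the heart of the paper's argument. But from there you and the paper diverge sharply: the paper does not invoke Theorem~\ref{thm:tail-estimate-nice} at all for this proposition; it simply cites Lemmas~3.2 and~3.4 of \cite{Shen}, where existence comes from Krylov--Bogolyubov, absolute continuity from the bounded transition kernel exactly as you say, and uniqueness/ergodicity from a soft argument (bounded a.c.\ density plus the forward-accessibility of the noisy covering map, giving a Doeblin-type positivity of an $n$-step transition density). None of the inducing machinery is needed, and indeed the inducing scheme is only used later for the quantitative $L^1$ compactness in the proof of the Main Theorem.

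Your heavier route also has a genuine gap in the uniqueness step. You argue that if $\mu$ is an a.c.\ stationary measure then the normalised restriction of $\mu\times\theta_\eps^\N$ to $\mathbf V$ is $\mathcal R$-invariant and hence equals $\hat\nu_\eps$, after which Kac's formula identifies $\mu$ with $\mu_\eps$. But $\mathcal R$ is \emph{not} the first return map to $\mathbf V$: the stopping time $m_{\mathbf V}=m_{K_0}+S$ is the first hit of $\mathbf V$ \emph{after} time $m_{K_0}$, and an orbit starting near a parabolic point will typically re-enter $\mathbf V$ many times before that. Poincar\'e--Kac only gives that $\mu\times\theta_\eps^\N|_{\mathbf V}$ is invariant under the genuine first return map $\mathcal R_0$, and an $\mathcal R_0$-invariant measure need not be $\mathcal R$-invariant. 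You also cannot simply transfer your cone/Lasota--Yorke argument to $\mathcal R_0$, because $\mathcal R_0$ has no uniform expansion: on the branches that re-enter $\mathbf V$ near the neutral fixed point in one step, $|Df_t|\approx 1$. (By contrast, the saturation step in your existence argument is fine: for any integrable stopping time, $\tilde\mu_\eps=\sum_{j\ge0}(F^j)_*(\hat\nu_\eps|_{\{m_{\mathbf V}>j\}})$ is $F$-invariant, as a short telescoping computation shows; the asymmetry is that pushing the tower forward works for any stopping time, while pulling a given invariant measure back to a return-map invariant measure only works for first returns.) The simplest repair is to abandon the inducing route entirely and argue as in \cite{Shen}: every stationary measure has density $\le 1/(2\eps)$ and, using topological exactness of $f$ together with the noise, has full support with density bounded below after finitely many convolutions; two distinct ergodic stationary measures would then have to be mutually singular, which is impossible.
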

\begin{proof}
The proof of these statements is fairly standard, and can be found for example in Lemmas 3.2 and 3.4 in
\cite{Shen}.
\end{proof}

\subsection{Perron-Frobenius operator}
As usual, we will use the Perron-Frobenius operator.
Given $J\subset \T$, and $f_\bft$
we define
$$\mathcal{L}_{J,n,\bft}(x)=\sum_{f^n_\bft(y)=x, y\in J} \dfrac{1}{|Df^n_\bft( y^+)|} \mbox{ and }
\widehat{\mathcal{L}_{J,n,\bft}}=\dfrac{1}{|J|}\mathcal{L}_{J,n,\bft}.$$
We should remark that $\mathcal{L}_{J,n,\bft}$ is the density function of the absolutely continuous measure
$(f_\bft^n)_*(\Leb|J)$ and has support $f^n_\bft(J)$.
In other words, $\mathcal{L}_{J,n,\bft}$ is equal to $(\mathcal{PF})^n(1_J)$ where $\mathcal{PF}$ is the Perron-Frobenius
operator. We also note that $\widehat{\mathcal{L}_{J,n,\bft}}$ is the density of the push forward of the relative measure on
$J$, so the integral over $\T^1$ of this density is equal to $1$.

\subsection{Compact subsets of $L^1$}\label{subsec:compactinL1}
Let $L^1(\T)$ be the Banach space of $L^1$ functions w.r.t. the Haar measure $\Leb$ on $\T$
and let $||\phi||_1$ denote the $L^1$ norm of $\phi$.
In this subsection we will  establish a suitable compact subset of $L^1$.

The following lemma is well-known. See for example~\cite[Theorem IV.8.20]{DS}.

\begin{lemma}[Precompact sets in $L^1$]\label{lem:precompact}
A bounded set $\mathcal{K}$ of functions in $L^1$ is pre-compact if and only if
$||\psi(x+h)-\psi(x)||_{L^1}$ tends to zero as $h\to 0$ uniformly
for all $\psi\in \mathcal{K}$.
\end{lemma}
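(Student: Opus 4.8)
The plan is to prove this as the Kolmogorov--Riesz--Fr\'echet compactness criterion on the compact group $\T$, establishing both implications by hand since the exact form stated is what gets used. Recall at the outset that in the complete space $L^1(\T)$, ``pre-compact'' is the same as ``totally bounded'', so throughout I will argue via finite nets.

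For the forward implication (pre-compactness $\Rightarrow$ uniform continuity of translation), I would argue by a standard $\epsilon/3$-net. Given $\epsilon>0$, cover the totally bounded set $\mathcal{K}$ by finitely many $L^1$-balls of radius $\epsilon/3$ centred at $\psi_1,\dots,\psi_n\in\mathcal{K}$. For a \emph{single} $L^1$ function one has $\|\psi_i(\cdot+h)-\psi_i\|_1\to 0$ as $h\to 0$: this is immediate for $\psi_i\in C(\T)$ by uniform continuity, and extends to all of $L^1(\T)$ by density of $C(\T)$ together with the fact that translation is an $L^1$-isometry. Choose $\delta>0$ so small that $|h|<\delta$ forces this to be $<\epsilon/3$ for every $i$ simultaneously; then for arbitrary $\psi\in\mathcal{K}$, picking $\psi_i$ with $\|\psi-\psi_i\|_1<\epsilon/3$ and using the triangle inequality and translation-invariance of the norm yields $\|\psi(\cdot+h)-\psi\|_1<\epsilon$ for $|h|<\delta$. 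This direction is routine.

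The substance is the reverse implication. Assume $\|\psi\|_1\le M$ for all $\psi\in\mathcal{K}$ and that $\omega(h):=\sup_{\psi\in\mathcal{K}}\|\psi(\cdot+h)-\psi\|_1\to 0$ as $h\to 0$. Fix a smooth approximate identity $\rho_\delta\ge 0$ on $\T$ with $\supp\rho_\delta\subset B_\delta(0)$ and $\int_\T\rho_\delta\,d\Leb=1$, and study the mollified family $\mathcal{K}_\delta=\{\psi*\rho_\delta:\psi\in\mathcal{K}\}$. First, $\mathcal{K}_\delta$ is uniformly bounded, since $\|\psi*\rho_\delta\|_\infty\le M\|\rho_\delta\|_\infty$, and equi-Lipschitz, since $|(\psi*\rho_\delta)(x)-(\psi*\rho_\delta)(x')|\le M\,\Lip(\rho_\delta)\,|x-x'|$; so by Arzel\`a--Ascoli it is pre-compact in $C(\T)$, and hence---$\T$ being compact, so that $C(\T)$ embeds continuously into $L^1(\T)$---pre-compact in $L^1(\T)$. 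Second, mollification approximates uniformly over $\mathcal{K}$: from $(\psi*\rho_\delta-\psi)(x)=\int_\T(\psi(x-y)-\psi(x))\rho_\delta(y)\,d\Leb(y)$ and Minkowski's integral inequality,
$$\|\psi*\rho_\delta-\psi\|_1\le\int_\T\|\psi(\cdot-y)-\psi\|_1\,\rho_\delta(y)\,d\Leb(y)\le\sup_{|y|\le\delta}\omega(y),$$
which tends to $0$ as $\delta\to 0$, uniformly in $\psi\in\mathcal{K}$. Given $\epsilon>0$, choose $\delta$ so that the last quantity is $<\epsilon$; then $\mathcal{K}$ lies in the $\epsilon$-neighbourhood of the totally bounded set $\mathcal{K}_\delta$, so a finite $\epsilon$-net for $\mathcal{K}_\delta$ is a finite $2\epsilon$-net for $\mathcal{K}$. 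As $\epsilon$ was arbitrary, $\mathcal{K}$ is totally bounded, hence pre-compact by completeness of $L^1(\T)$.

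The main obstacle is the reverse implication, and within it the step that the mollified family $\mathcal{K}_\delta$ is pre-compact in $L^1$; here the compactness of $\T$ does the real work, since it removes the tightness/decay-at-infinity hypothesis that would be needed for the analogous statement on $\R^n$. Everything else is bookkeeping with the triangle inequality, Fubini's theorem, and density of $C(\T)$ in $L^1(\T)$.
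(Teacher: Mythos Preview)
Your proof is correct; both implications are handled cleanly, and the mollification argument for the reverse direction is the standard and right one on a compact group like $\T$. Note, however, that the paper does not prove this lemma at all: it simply records it as well-known and cites \cite[Theorem IV.8.20]{DS} (Dunford--Schwartz). So you have supplied a self-contained argument where the authors chose to defer to the literature; there is nothing to compare at the level of strategy.
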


Let as before $\PP_*=\{p\in \PP: Df(p^-)>Df(p^+)\}$ and let $$\F=\{Df(p^-)/Df(p^+): p\in \PP_*\}$$ which is a finite subset of $(1,\infty)$.
Given a constant $C>0$ and an interval $W=(a,b)\subset \T$, let $\mathcal{E}_{C}(W)$ denote the space of maps
$\varphi: \T \to (0,\infty)$ for which there exists finitely many points
$$a=a_0<a_1<\cdots< a_n=b$$
such that
\begin{itemize}
\item $\varphi\equiv 0$ in $\T\setminus W$;
\item for each $0\le i<n$, there exists $C_i>0$ such that
$$|\log\varphi(x)-\log\varphi(y)|\le C_i \left(\frac{|x-y|}{a_{i+1}-a_i}\right)^\alpha, \,\, a_i< x<y<a_{i+1},$$
and such that $$\sum_{i=0}^{n-1} C_i\le C;$$
\item for each $1\le i<n$, $\varphi(a_i)=\varphi(a_{i}^+)= \lambda \varphi(a_{i}^-)$ for some $\lambda\in \F$.
\end{itemize}
For $M>0$, let ${\sE}_C^M$ denote the set of all $L^1$ functions $\varphi: \T\to [0,\infty)$ for which there exists an open interval $W\subsetneq\T$ such that
$\varphi\in \sE_C(W)$ and $\varphi(x)\le M$ for all $x\in \T$.

\begin{lemma}\label{lem:cptness1}
\begin{enumerate}
\item If $\varphi\in \sE_{K}(W)$ for some open interval $W\subset \T$, then for any $x, y\in W$ with $x<y$ we have
$$\varphi(x)\le e^K \varphi(y).$$ Moreover, if $\varphi$ has at least $N$ points of discontinuity in $(x,y)$, then
$$\varphi(x)\le e^K \lambda_0^{-N} \varphi(y),$$ where $\lambda_0=\inf\F>1$.
\item For each $K>0$ and $M>0$, ${\sE}_K^M$ is pre-compact in $L^1(\T)$.
\end{enumerate}
\end{lemma}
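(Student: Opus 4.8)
The plan: Part (1) is a telescoping estimate. Given $x<y$ in $W$, let $a_{j+1}<\dots<a_{j+N'}$ be the discontinuity points of $\varphi$ lying in $(x,y)$, so that $x$ and $y$ are joined by $N'+1$ intervals of continuity, each sitting inside one of the defining pieces of $\varphi$. Expanding
\[
\frac{\varphi(x)}{\varphi(y)}=\frac{\varphi(x)}{\varphi(a_{j+1}^-)}\cdot\frac{\varphi(a_{j+1}^-)}{\varphi(a_{j+1}^+)}\cdot\frac{\varphi(a_{j+1}^+)}{\varphi(a_{j+2}^-)}\cdots\frac{\varphi(a_{j+N'}^+)}{\varphi(y)},
\]
each ``continuity'' factor compares two points of a common piece, which are separated by at most the length of that piece, so by the defining H\"older inequality that factor is at most $e^{C_i}$ for the relevant index $i$; each ``jump'' factor equals $\varphi(a_i^-)/\varphi(a_i^+)=1/\lambda\le 1/\lambda_0<1$. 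Multiplying and using $\sum_iC_i\le K$ gives $\varphi(x)\le e^K\lambda_0^{-N'}\varphi(y)$; since $\lambda_0^{-N'}\le1$ this is the first bound, and since $N'\ge N$ whenever $\varphi$ has at least $N$ discontinuities in $(x,y)$ it is the second.

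For part (2) I will apply the Fr\'echet--Kolmogorov criterion (Lemma~\ref{lem:precompact}). Boundedness in $L^1$ is clear ($\|\varphi\|_1\le M$), so it remains to bound $\|\varphi(\cdot+h)-\varphi\|_1$ uniformly over $\sE_K^M$ by a quantity tending to $0$ as $h\to0$; by a change of variable we may take $h>0$. Fix $\varphi\in\sE_K^M$ with partition $a=a_0<\dots<a_n=b$, constants $C_i$ with $\sum C_i\le K$, pieces $J_i=(a_i,a_{i+1})$ of length $\ell_i$, averages $v_i:=\ell_i^{-1}\int_{J_i}\varphi$, and $S=\{a_0,\dots,a_n\}$. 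Decompose $\T$ into the sets $A_i=\{x\in J_i:x+h\in J_i\}$ and the remainder $R$. On $A_i$ the H\"older bound gives $|\varphi(x+h)-\varphi(x)|\le\varphi(x)\bigl(e^{C_i(h/\ell_i)^\alpha}-1\bigr)$, hence, using $e^t-1\le te^t$, $(h/\ell_i)^\alpha\le1$, $\ell_i\le1$ and $\int_{A_i}\varphi\le\ell_iv_i$,
\[
\int_{A_i}|\varphi(x+h)-\varphi(x)|\,dx\le C_ih^\alpha\ell_i^{1-\alpha}e^{C_i}v_i\le Kh^\alpha e^{C_i}v_i .
\]
On $R$ the integrand is $\le\varphi(x)1_{N_h(S)}(x)+\varphi(x+h)1_{N_h(S)}(x+h)$, where $N_h(S)$ is the $h$-neighbourhood of $S$ (if $x$ and $x+h$ are not in a common $J_i$, then whichever of them carries positive mass is within distance $h$ of $S$); since any point of $S$ other than the two endpoints of $J_i$ is at least as far from $J_i$ as those endpoints, $|J_i\cap N_h(S)|\le2h$, and with $\sup_{J_i}\varphi\le e^{C_i}v_i$ this yields $\int_R|\varphi(x+h)-\varphi(x)|\le4h\sum_ie^{C_i}v_i$. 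Thus $\|\varphi(\cdot+h)-\varphi\|_1\le(Kh^\alpha+4h)\sum_ie^{C_i}v_i$, and everything reduces to a single uniform bound $\sum_ie^{C_i}v_i\le C_\ast$.

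This last estimate is the crux and the only delicate point, precisely because $n$ can be arbitrarily large. The key observation is that a long chain of pieces forces $v_i$ to be geometrically small for small $i$: comparing $v_{i+1}$ with $v_i$ across $a_{i+1}$, using $\sup_{J_i}\varphi\le e^{C_i}\inf_{J_i}\varphi$, the jump relation $\varphi(a_{i+1}^+)=\lambda_{i+1}\varphi(a_{i+1}^-)$ with $\lambda_{i+1}\ge\lambda_0$, and the analogous bound on $J_{i+1}$, gives $v_{i+1}\ge\lambda_0e^{-(C_i+C_{i+1})}v_i$, whence $v_i\le\lambda_0^{-(n-1-i)}e^{2K}v_{n-1}\le\lambda_0^{-(n-1-i)}e^{2K}M$. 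Writing $e^{C_i}=1+(e^{C_i}-1)$ and summing --- using $\sum_i(e^{C_i}-1)\le e^K\sum_iC_i\le Ke^K$ together with $v_i\le M$ for the second part, and the geometric series $\sum_i\lambda_0^{-(n-1-i)}\le\lambda_0/(\lambda_0-1)$ for the first --- produces $\sum_ie^{C_i}v_i\le C_\ast(K,M,\lambda_0)$, a constant depending only on $K$, $M$ and $\lambda_0=\inf\F>1$ (the degenerate case $\F=\emptyset$, forcing $n=1$, being trivial). Hence $\|\varphi(\cdot+h)-\varphi\|_1\le C_\ast(Kh^\alpha+4h)\to0$ uniformly in $\varphi\in\sE_K^M$, and Lemma~\ref{lem:precompact} gives the pre-compactness.
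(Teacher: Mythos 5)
Your part (1) is exactly the telescoping argument of the paper, so no comment needed there. For part (2), you take a genuinely different route. The paper first asserts (invoking Lemma~\ref{lem:precompact}) that the subfamily $\sE_K^M(N)$ of functions with exactly $N$ discontinuities is pre-compact for each fixed $N$, and then uses the geometric decay in part (1) to show that every $\varphi\in\sE_K^M$ is within $\eta$ in $L^1$ of some element of $\sE_K^M(N(\eta))$; pre-compactness of $\sE_K^M$ then follows since it is contained in the union of finitely many $\eta$-nets. You instead verify the Fr\'echet--Kolmogorov criterion for the whole class $\sE_K^M$ in one stroke, splitting $\|\varphi(\cdot+h)-\varphi\|_1$ into the part where $x$ and $x+h$ lie in a common piece (controlled by the H\"older modulus) and the part near the discontinuity set (controlled by the $h$-neighbourhood of at most $n+1$ points), and reducing everything to the single uniform bound $\sum_i e^{C_i}v_i\le C_*(K,M,\lambda_0)$. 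The mechanism behind that bound --- the jump condition forces the piece-averages $v_i$ to decay geometrically in $n-i$, so the tail is summable --- is precisely the same geometric-decay phenomenon the paper exploits in the reduction to $\sE_K^M(N)$, so the two arguments are cousins, but yours is more self-contained: it avoids the intermediate claim about $\sE_K^M(N)$ (which the paper dismisses as ``easy'' but which still requires a separate equicontinuity check, since the partition points and piece lengths vary freely within that family) and produces an explicit uniform modulus $C_*(Kh^\alpha+4h)$. The paper's version is shorter on the page but defers more; yours is longer but fully quantitative, which could be reused if one wanted an effective rate in the stochastic stability statement. All steps check out, including the small case distinctions (the restriction to $h\le\ell_i$ on $A_i$, the handling of $x$ or $x+h$ outside $W$, and the degenerate case $\F=\emptyset$).
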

\begin{proof} (1) Note that if $a_i\le x<y\le a_{i+1}$, $\varphi(x)=\varphi(x^+)\le e^{K_i}\varphi(y^-)\le e^K\varphi(y)$. For general $x, y\in W$ with $x<y$, let $i_0$ be maximal such that $x\ge a_{i_0}$ and let $i_1$ be minimal such that
$y< a_{i_1}$, we have
\begin{align*}
\varphi(x)\le e^{K_{i_0}}\varphi(a_{i_0+1}^-)\le
e^{K_{i_0}}\lambda_0^{-1} \varphi(a_{i_0+1})
\le  e^{K_{i_0}+K_{i_0+1}}\lambda_0^{-2} \varphi(a_{i_0+2})\\
\le \cdots \le e^{\sum_{i=i_0}^{i_1-2} K_i}\lambda_0^{-i_1+i_0+1}\varphi(a_{i_1-1}) \le e^K\lambda_0^{-i_1+i_0+1} \varphi(y).
\end{align*}

(2) For any integer $N\ge $, let ${\sE}_K^M(N)$ denote the subset of ${\sE}_K^M$ consisting of functions $\varphi$ with exactly $N$ points of discontinuity. Using Lemma~\ref{lem:precompact} it is easy to see that for each $N\ge 0$,
${\sE}_K^M(N)$ is pre-compact in $L^1(\T)$. To complete the proof, it suffices to show that for each $\eta>0$, there exists $N(\eta)\ge 0$ such that for any $\varphi\in {\sE}_K^M$ there exists $\varphi_N\in {\sE}_K^M(N)$ such that $\|\varphi-\varphi_N\|_1\le \eta$. But this follows easily from the last statement of part (1) of this lemma.
\end{proof}

\begin{lemma} \label{lem:distortionlemma}
For each $K>0$ and $\rho>0$ there exist $C=C(K)>0$ and $M=M(K,\rho)>0$ such that the following holds. Let $J\subset \hJ$ be open intervals, $m\ge 1$, $\bft\in\Ome$ such that
\begin{enumerate}
\item [(i)] $f_\bft^m: \hJ\to f_\bft^m(\hJ)$ is injective;
\item [(ii)] $\sum_{j=0}^{m-1}|f_\bft^j(\hJ)|^\alpha\le K$;
\item [(iii)] $f_\bft^m(\hJ)\setminus f_\bft^m(J)$ has a component to the right of $f_\bft^m(J)$ which has length $\ge \rho |f_\bft^m(\hJ)|$.
\end{enumerate}
Then
\begin{enumerate}
\item [(1)] $\mathcal{L}_{\hJ, m, \bft} \in \sE_C(f_\bft^n(\hJ)).$
\item [(2)] $|\hJ|^{-1} \mathcal{L}_{J, m, \bft} \in {\sE}_C^M.$
\item [(3)] for any $X\subset J$, $$\frac{|X|}{|\hJ|}\le M \frac{|f_\bft^m(X)|}{|f_\bft^m(\hJ)|}.$$
\end{enumerate}
\end{lemma}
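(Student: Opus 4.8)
The plan is to establish (1) by a chain-rule and distortion computation adapted to the kinks of $f$ at $\PP_*$, and then to read off (2) and (3) from (1) together with Lemma~\ref{lem:cptness1}. Since $f_\bft^m$ maps $\hJ$ homeomorphically onto $W:=f_\bft^m(\hJ)$, writing $\phi=(f_\bft^m|\hJ)^{-1}$ and $w_j=f_\bft^j\circ\phi$ we have $\mathcal L_{\hJ,m,\bft}\equiv 0$ off $W$ and
$$\mathcal L_{\hJ,m,\bft}(x)=\prod_{j=0}^{m-1}\bigl|Df_{t_j}(w_j(x)^+)\bigr|^{-1}\qquad(x\in W),$$
where each $w_j$ is an increasing homeomorphism of $W$ onto $f_\bft^j(\hJ)$. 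Since $Df_{t_j}=Df$, the map $u\mapsto\log|Df(u^+)|$ is, on $\T$, $\alpha$-H\"older off $\PP_*$ with a single constant $C'=C'(f)$ — near the neutral points this uses condition~(\ref{classEalpha}) and $|a^\alpha-b^\alpha|\le|a-b|^\alpha$ — while at $p\in\PP_*$ it has a downward jump of size $\log(Df(p^-)/Df(p^+))$, with $Df(p^-)/Df(p^+)\in\F$. Hence $\mathcal L_{\hJ,m,\bft}$ is discontinuous in $W$ exactly at the finitely many $x$ with $w_j(x)\in\PP_*$ for some $j$, where it jumps \emph{up} by a factor of $\F$ (at a point at which two indices cross $\PP_*$ simultaneously the factor would be a product of two elements of $\F$; this is avoided for $\bft$ outside a $\theta_\eps^\N$-null set, and never arises for the intervals $\hJ$ occurring in the sequel, so we ignore it).

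Label these discontinuities together with $\partial W$ as $a_0<\cdots<a_n$ and put $b_i=\phi(a_i)$. On each $[b_i,b_{i+1}]$ the iterates $[b_i,b_{i+1}],f_\bft([b_i,b_{i+1}]),\dots,f_\bft^{m-1}([b_i,b_{i+1}])$ avoid $\PP_*$, so $f_\bft^m$ there — and likewise every tail $f_\bft^{m-j}$ on $f_\bft^j([b_i,b_{i+1}])$ — is a $C^1$ diffeomorphism with $\Dist\le C''\sum_{j}|f_\bft^j([b_i,b_{i+1}])|^\alpha\le C''K$, by the standard distortion estimate for $\PP_*$-free orbits. Feeding the resulting bound $|w_j(x)-w_j(z)|\le e^{C''K}\,|f_\bft^j([b_i,b_{i+1}])|\,|x-z|/(a_{i+1}-a_i)$ into
$$|\log\mathcal L_{\hJ,m,\bft}(x)-\log\mathcal L_{\hJ,m,\bft}(z)|\le C'\sum_{j=0}^{m-1}|w_j(x)-w_j(z)|^\alpha\qquad(a_i<x<z<a_{i+1})$$
gives the H\"older estimate required by the class $\sE_C$, with $C_i=C'e^{\alpha C''K}\sum_{j}|f_\bft^j([b_i,b_{i+1}])|^\alpha$, so it remains to verify $\sum_i C_i\le C(K)$, i.e. $\sum_{j}\sum_i|f_\bft^j([b_i,b_{i+1}])|^\alpha\le C(K)$, where for fixed $j$ the intervals $f_\bft^j([b_i,b_{i+1}])$ are pairwise disjoint in $f_\bft^j(\hJ)$. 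This yields (1).

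Granting (1), the other two items are soft. For (2): on $f_\bft^m(J)\subset W$ one has $\mathcal L_{J,m,\bft}=\mathcal L_{\hJ,m,\bft}$, so $|\hJ|^{-1}\mathcal L_{J,m,\bft}$ is a constant times the restriction of an element of $\sE_C(W)$ to the subinterval $f_\bft^m(J)$, hence lies in $\sE_{C}(f_\bft^m(J))$; and since $\mathcal L_{\hJ,m,\bft}$ is essentially increasing by Lemma~\ref{lem:cptness1}(1), averaging over the right-hand gap $G$ of hypothesis~(iii) gives $\mathcal L_{\hJ,m,\bft}(x)\le e^{C}|G|^{-1}\int_G\mathcal L_{\hJ,m,\bft}\le e^{C}|\hJ|/|G|$, which is the pointwise bound. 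For (3): by Lemma~\ref{lem:cptness1}(1), $|Df_\bft^m|$ is essentially decreasing on $\hJ$; the preimage $\widetilde G=\phi(G)$ of the gap lies entirely to the right of $J\supset X$, so $|Df_\bft^m(x)|\ge e^{-C}|Df_\bft^m(y)|$ for all $x\in X$ and $y\in\widetilde G$, and choosing $y$ to maximise, together with $\max_{\widetilde G}|Df_\bft^m|\ge|G|/|\widetilde G|\ge\rho\,|f_\bft^m(\hJ)|/|\hJ|$, yields $|f_\bft^m(X)|/|X|\ge\min_X|Df_\bft^m|\ge e^{-C}\rho\,|f_\bft^m(\hJ)|/|\hJ|$, i.e. (3) with $M=e^{C}/\rho$.

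The crux is part (1), and within it the kinks of $f$ at $\PP_*$: beyond checking that the jump ratios are precisely the elements of $\F$ (which uses $Df_{t}=Df$ and the one-sided derivative convention), the genuinely delicate point is to bound $\sum_i C_i$ by a constant depending only on $K$, equivalently to control the number of times the orbit of $\hJ$ meets $\PP_*$ in terms of $K$ and $f$ alone. Everything else is the standard one-dimensional bounded-distortion and Koebe package, here only lightly adapted to a piecewise-$C^{1+\alpha}$ map with one-sided derivatives.
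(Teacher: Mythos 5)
Your argument for part~(1) mirrors the paper's almost exactly: both decompose $\hJ$ into the subintervals cut out by preimages of $\PP_*$, both use the kink-free distortion estimate $\Dist(f_\bft^{m-j}|f_\bft^j(\hJ_i))\lesssim \sum_{j'\ge j}|f_\bft^{j'}(\hJ_i)|^\alpha$ to pass from the $\alpha$-H\"older modulus of $\log Df$ to a H\"older bound for $\log\mathcal L_{\hJ,m,\bft}$ with the per-interval constant $C_i\asymp \sum_j|f_\bft^j(\hJ_i)|^\alpha$, and both handle (2) and (3) as soft consequences of (1) together with Lemma~\ref{lem:cptness1}(1), the normalisation $\|\,|\hJ|^{-1}\mathcal L_{\hJ,m,\bft}\|_1=1$, and the right-hand gap from hypothesis~(iii). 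Your treatment of (2) and (3) is essentially verbatim the paper's (the paper calls the gap $R$ rather than $G$ and works with $\varphi=\mathcal L_{\hJ,m,\bft}$ rather than $|Df_\bft^m|$, but these are the same inequalities).

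The genuine gap is exactly the one you flag but do not close: the bound $\sum_i C_i\le C(K)$. You reduce it to $\sum_j\sum_i|f_\bft^j(\hJ_i)|^\alpha\le C(K)$ and remark that the intervals $f_\bft^j(\hJ_i)$ are disjoint in $f_\bft^j(\hJ)$ for fixed $j$ — but because $0<\alpha<1$ the subadditivity $(a+b)^\alpha\le a^\alpha+b^\alpha$ gives $\sum_i|f_\bft^j(\hJ_i)|^\alpha\ge|f_\bft^j(\hJ)|^\alpha$, so this disjointness pushes the sum \emph{above} $K$, not below; one does need a quantitative control on the number $n$ of kink-crossings (for instance, because crossings of $\PP_*$ force a definite proportion of the budget $K$ to be spent between consecutive crossings). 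The paper itself writes ``Note that $\sum_{i=0}^{n-1}C_1K_i\le C_1K$'' without further justification, so you have not overlooked anything the paper proves; but since you write ``This yields (1)'' immediately after ``it remains to verify $\sum_i C_i\le C(K)$'', the argument as submitted stops one step short of the conclusion. Supplying the argument bounding $n$ (or $\sum_iK_i$) in terms of $K$ and $f$ is what is needed to complete part~(1).
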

\begin{proof} (1) Let $A=\{x\in \hJ: f_\bft^j(x)\in \PP_* \mbox{ for some } 0\le j<m\}$. This is a finite set (maybe empty) and label its elements as $a_1< a_2< \cdots< a_{n-1}$. Let $a_0<a_n$ be the endpoints of $\hJ$ and for each $0\le i\le n$, let $a_i'=f_\bft^m(a_i)$. Then for each $0\le i<m$, $f_\bft^m$ maps $\hJ_i:=(a_i, a_{i+1})$ diffeomorphically onto $\hJ_i':=(a_i', a_{i+1}')$.  Moreover, there exists a constant $C_0>0$ such that
$$\Dist (f_\bft^m |\hJ_i)\le C_0 K_i\le C_0 K, \mbox{ where }K_i=\sum_{j=0}^{m-1} |f_\bft^j(\hJ_i)|^\alpha.$$
Let $\varphi$ be the inverse of $f^m\colon \hJ\to \hJ'$ and
for each $x', y'\in \hJ_i'$, letting $x, y\in \hJ_i$ be their images under $\varphi$.  We have
\begin{align*}
\begin{aligned}
|\log \varphi(x')- \log \varphi(y')|  &=|\log Df_\bft^m(x)-\log Df_\bft^m (y)|  \le C_0 \sum_{j=0}^{m-1} |f_\bft^j(x)-f_\bft^j(y)|^\alpha
 \\
 & \le C_0 e^{C_0K\alpha} \frac{|x'-y'|^\alpha}{|\hJ_i'|^\alpha}\sum_{j=0}^{n-1} |f_\bft^j(\hJ_i)|^\alpha  \le C_1 K_i \frac{|x'-y'|^\alpha}{|\hJ_i'|^\alpha} ,
 \end{aligned}
\end{align*}
where $C_1= C_0 e^{C_0K\alpha}$. Note that $\sum_{i=0}^{n-1} C_1 K_i\le C_1 K=:C$.  Clearly for each $1\le i<n$, $\varphi(a_i')=\varphi(a_i'^+)=\lambda \varphi(a_i'^-)$ holds for some $\lambda\in \F$.
(It is possible that there exists a point $x\in \hJ_i$ so that several
of the points $x,f_\bft(x),\dots,f_\bft^{m-1}(x)$ are in $P_0$, in which one can take several of the points
$a_i$ to coincide.)
Thus
$\varphi\in \sE_{C}(f_\bft^m(\hJ))$.

(2) Let $R$ denote the component of $\hJ\setminus J$ which lies to the right of $J$ and let $R'=f_\bft^m(R)$. Let $\psi= |\hJ|^{-1}\mathcal{L}_{\hJ, m,\bft}$. Then $\psi\in \sE_C$. By Lemma~\ref{lem:cptness1}, it follows that $\psi(x)\le e^C\psi(y)$ for all $x\in f_\bft^m(J)$ and any $y\in R'$. Since $\|\psi\|_1=1$, it follows that there exists $M=M(\rho,K)$ such that $\psi(x)\le M$ for all $x\in f_\bft^m(J)$.
Since $|\hJ|^{-1} \mathcal{L}_{J, m,\bft}= \psi 1_{f_\bft^m(J)}$, the statement follows.

(3) By Part (1) of the lemma, and Lemma~\ref{lem:cptness1}, for each $x\in X$ and $y\in R$, we have
$Df_\bft^n(x)\ge e^{-C} Df_\bft^n(y).$ Thus
$$\frac{|f_\bft^m(X)|}{|X|}\ge e^{-C}\frac{|f_\bft^m(R)|}{|R|}\ge e^{-C}\rho \frac{|f_\bft^m(\hJ)|}{|\hJ|},$$
which implies the desired estimate by taking $M=e^C/\rho$.
\end{proof}

\subsection{Proof  of the Main Theorem}

Now we will combine the previous compact results, with Theorem~\ref{thm:tail-estimate-nice}
to obtain the proof of the Main Theorem.

\begin{proofof}{Main Theorem}
Statements (1) and (2) in the Main Theorem follow exactly as the first part of the proof
of the Main Theorem in Section 3.1 of \cite{Shen}. The proof of Statements (3) and (4) of the Main Theorem
is slightly different, and therefore we give the argument here.

Let $\mathbf{V}$ be a nice set for the $\eps$-perturbations given by Theorem~\ref{thm:tail-estimate-nice}, let $\mathbf{U}=\{(x,\bft)\in V: m_{\mathbf{V}}(x,\bft)<\infty\}$, and
let $G:\textbf{U}\to \mathbf{V}$ denote the map $(x,\bft)\to F^{m_{\mathbf{V}}(x,\bft)}(x,\bft)$.
Let $Z=(-\delta,\delta)$ and let
$$\phi_n(x)=\phi_{n,\eps}(x)=\int_{\bft} \mathcal{L}_{Z,n,\bft}(x) \, d\theta_\eps^\N.$$

\noindent
{\bf Claim:}  It suffices to show that there exists a compact subset $\mathcal K$ of $L^1$ which
does not depend on $\eps$ and $n$ and so that $\phi_n\in \mathcal K$ for all $n$ and all $\eps>0$ small.
{\bf Proof of Claim:} The assumption of the claim implies that there is a compact subset $\mathcal{K}_0$ (the convex hull of $\mathcal{K}$) of $L^1$, such that for each $n=1,2,\ldots$ and each $\eps>0$ small enough, we have $\dfrac{1}{n}\sum_{i=0}^{n-1}\phi_{i,\eps}(x)\in \mathcal{K}_0$.  Since $\mu_\eps$ is the weak star limit of $\dfrac{1}{n}\sum_{i=0}^{n-1}\phi_{i,\eps}(x)dx$ as $n\to\infty$, it follows that $\mu_\eps=\psi_\eps(x) dx$ for some $\psi_\eps\in \mathcal{K}_0$.
Since any limit of $\psi_\epsilon$ as $\epsilon\to 0$ is 
the density  of
an absolutely continuous invariant measure of $f$,
and since $f$ has at most one absolutely continuous invariant probability measure,
it follows that  $\psi_\eps$ converges in
$L^1$ as $\epsilon\to  0$. Thus it follows that
$f$ has an absolutely continuous invariant measure $\psi dx$ and that the density of the measure
$\mu_\eps$ converges in $L^1$ to $\psi$ as $\epsilon\to 0$, thus concluding
the proof of the claim.

By considering subsequences, it even suffices to find for each $\eta>0$
a compact set $\mathcal K_\eta$ of $L^1$ so that each function $\phi_n$
can be written as
$\phi_n=\phi_n^0+\phi_n^1$
where $||\phi_n^0||_1\le \eta$ and $\phi_n^1\in {\mathcal K}_\eta$.
In order to do this, we will choose a suitable $M$ depending on $\eta$.

For $k\ge 0$, let $\mathbf{U}_k=G^{-k}(\textbf{V})$ and $\mathbf{V}_k=\{(x,\bft)\in \mathbf{V}: m_{\mathbf{V},K}\ge k\}$. For each $\bft\in\Ome$, let us label the components of $\mathbf{U}_k^\bft$ as
$J_{k,i}^{\bft}$ and let $s_{k,i}^{\bft}$ be the integer so that
$$G^k(y,\bft)=F^{s_{k,i}^\bft}(y,\bft)\mbox{ for }y\in J_{k,i}^\bft.$$
For each $k,i$, let $\hJ_{k,i}^\bft$ denote the
component of $f_\bft^{-s_{k,i}^\bft}(B_{\tau_*/2}(\PP_0))$ which contains $J_{k,i}^\bft$. Note that
\begin{equation}\label{eqn:hJlengthsum}
\sum_{j=0}^{s_{k,i}^\bft-1} |f_\bft^j(\hJ_{k,i}^\bft)|^\alpha\le \sL^{s_{k,i}^{\bft}}(y,\bft)\le \frac{K\lambda_*^\alpha}{\lambda_*^\alpha-1}\tau_*^\alpha.
\end{equation}

Fix $n\ge 1$. For each $0\le m< n$, let $\mathcal{H}_m^\bft=\{J_{k,i}^\bft: s_{k,i}^\bft =m\},$ and for each $J_{k,i}^\bft\in\mathcal{H}_m^\bft$, let
$J_{k,i,j}^\bft$ denote the components of $J_{k,i}^\bft\cap f_\bft^{-m}(\mathbf{U}^{\sigma^m\bft})$, let $m_{k,i,j}^\bft$ denote the value of $m_{\mathbf{V}}$ on the interval
$f_\bft^m(J_{k,i,j}^\bft)$,
and let $\mathcal{X}_{k,i}^\bft$ denote the collection of all $J_{k,i,j}^\bft$ for which $m_{k,i,j}^\bft\ge n-m$. Moreover, for each $M>0$, let $\mathcal{X}_{k,i}^\bft(M)$ denote the sub-collection of
$\mathcal{X}_{k,i}^\bft$ consisting of those $J_{k,i,j}^\bft$'s for which $m_{k,i,j}^\bft\ge M$. When $m\le n-M$, $\mathcal{X}_{k,i}^\bft(M)=\mathcal{X}_{k,i}^\bft$ but when $m>n-M$ the former set may be strictly smaller.
Furthermore, let $X_{k,i}^\bft$ (resp. $X_{k,i}^\bft(M)$) denote the union of the elements of $\mathcal{X}_{k,i}^\bft$ (resp. $\mathcal{X}_{k,i}^\bft(M)$).

By part (3) of Lemma~\ref{lem:distortionlemma}, (\ref{eqn:hJlengthsum}) implies that there exists $C_0>0$ such that whenever $s_{k,i}=m\le n-M$,
\begin{equation}
\frac{|X_{k,i}^\bft(M)|}{|\hJ_{k,i}^\bft|}=\frac{|X_{k,i}^\bft|}{|\hJ_{k,i}^\bft|}\le C_0 \frac{|\mathbf{V}_{n-m}^{\sigma^m\bft}|}{|f_\bft^m(\hJ_{k,i}^\bft)|}=C_0\tau_*^{-1} |\mathbf{V}_{n-m}^{\sigma^m\bft}|,
\end{equation}
and whenever  $n-M<s_{k,i}=m<n$,
\begin{equation}
\frac{|X_{k,i}^\bft(M)|}{|\hJ_{k,i}^\bft|}\le C_0 \frac{|\mathbf{V}_{M}^{\sigma^m\bft}|}{|f_\bft^m(\hJ_{k,i}^\bft)|}=C_0\tau_*^{-1} |\mathbf{V}_{M}^{\sigma^m\bft}|.
\end{equation}
Putting $X^{m,\bft}(M)=\bigcup_{s_{k,i}^\bft=m} X_{k,i}^\bft(M)$ for each $\bft$, since the intervals $\hJ_{k,i}^\bft\in \mathcal{H}_m^\bft$ are pairwise disjoint, it follows that
there exists a constant $C_1>0$ and a compact subset $\mathcal{K}$ of $L^(\T)$ such that
\begin{equation}
|X^{m,\bft}(M)|\le C_1 |\mathbf{V}_{m'}^{\sigma^m\bft}|, \mbox{ where } m'=\max(n-m, M).
\end{equation}
Choose a large positive integer $M$ such that $\sum_{i=M}^\infty \mathbb{P}_\eps (\mathbf{V}_i) + M\mathbb{P}_\eps(\mathbf{V}_M) \le \eta/C_1$, and let
\begin{equation*}
\varphi_n^0=\sum_{m=0}^{n-1} \int_\bft \mathcal{L}_{X^{m,\bft}(M), n, \bft} d\theta_\eps^\N\mbox{ and }\varphi_n^1=\varphi_n-\varphi_n^{0}.
\end{equation*}
Then
$$\|\varphi_n^0\|_1\le \sum_{m=0}^{n-1}\int_\bft |X^{m,\bft}(M)|d\theta_\eps^\N\le \eta.$$

It  remains to show that $\varphi_n^1$ defined above belongs to a compact subset $\mathcal{K}(\eta)$ of $L^1(\T)$.
Note that for a.e. $\bft$, the intervals $J_{k,i,j}^\bft$ with $m_{k,i,j}^\bft+s_{k,i}^\bft\ge n$ and $0\le s_{k,i}^\bft<n$, form a partition of  $\mathbf{V}^\bft$, up to a set of Lebesgue measure zero. Thus, for $n-M<m<n$, putting $Y_m^\bft$ to be the union of the intervals $J_{k,i,j}^\bft$ with $s_{k,i}^\bft=m$ and $n-m\le m_{k,i,j}^\bft< M$, we have
$$\varphi_n^1=\sum_{m=n-M+1}^{n-1}\int_\bft \mathcal{L}_{Y_m^\bft, n, \bft} d\theta_\eps^\N.$$
By part (2) of Lemma~\ref{lem:distortionlemma} and (\ref{eqn:hJlengthsum}),
$|\hJ_{k,i}^\bft|^{-1}\mathcal{L}_{J_{k,i,j}^\bft, s_{k,i}^\bft,\bft}$ is contained in $\sE_C^M$ for some constants $C, M>0$. By Lemma~\ref{lem:cptness1}, $\sE_C^M$ is pre-compact in $L^1(\T)$. For each $(k,i)$ the number of intervals $J_{k,i,j}^\bft$ is uniformly bounded from above by a constant depending on $M$. Thus $|\hJ_{k,i}^\bft|^{-1} \mathcal{L}_{\bigcup_j J_{k,i,j}^\bft, s_{k,i}^\bft, \bft}$ is contained in a compact subset $\mathcal{K}_1$ of $L^1(\T)$. Given $m$, the intervals $\hJ_{k,i}^\bft$ with $s_{k,i}^\bft=m$ are pairwise disjoint, which implies that $\mathcal{L}_{Y_m^\bft, m, \bft}$ is contained in a compact set $\mathcal{L}_2$ of $L^1(\T)$. Since $n-m<M$, it follows that $\mathcal{L}_{Y_m^\bft, n, \bft}$ is contained in a compact set $\mathcal{L}_3$ of $L^1(\T)$. Therefore, $\varphi_n^1$ is contained in some compact set $\mathcal{K}(\eta)$ of $L^1(\T)$.
\end{proofof}

\bigskip

\bibliographystyle{alpha}

\end{document}